\documentclass[a4paper,11pt,reqno,twoside]{amsart}
%
%
\usepackage{graphicx}
\usepackage{epsbox}
\usepackage{amscd}

\usepackage{color}
\usepackage{ifpdf}
\usepackage{array}
\usepackage{slashbox}
\usepackage[T1]{fontenc}
\usepackage[latin1]{inputenc}
\usepackage{mathtools}
\usepackage{amsthm,amssymb}
\usepackage{url}




\newcommand*{\C}{\mathbb{C}}
\newcommand*{\R}{\mathbb{R}}
\newcommand*{\Q}{\mathbb{Q}}
\newcommand*{\Z}{\mathbb{Z}}

%

%
%
%
%
\renewcommand*{\geq}{\geqslant}
\renewcommand*{\leq}{\leqslant}
%
%
%
\newtheoremstyle{erdfn}
  {}
  {}
  {\itshape}
  {}
  {\bfseries}
  {}
  { }
  {}
\newtheoremstyle{erthm}
  {}
  {}
  {\itshape}
  {}
  {\bfseries}
  {}
  { }
  {}
\newtheoremstyle{errem}
  {}
  {}
  {}
  {}
  {\bfseries}
  {}
  { }
  {}
\theoremstyle{erthm}
\newtheorem{theorem}{Theorem}[section] 
\newtheorem{corollary}[theorem]{Corollary}
\newtheorem{proposition}[theorem]{Proposition}
\newtheorem{lemma}[theorem]{Lemma}





\theoremstyle{erdfn}
\newtheorem{definition}[theorem]{Definition}

\theoremstyle{errem}

\newtheorem{remark}{Remark}


\numberwithin{equation}{subsection}
%
%
\addtolength{\textheight}{2cm}
\addtolength{\topmargin}{-1.5cm}
\addtolength{\textwidth}{2cm}
\addtolength{\oddsidemargin}{-1cm}
\addtolength{\evensidemargin}{-1cm}


%
\setcounter{tocdepth}{3}
\setcounter{secnumdepth}{3}
\title[On zeros of self-reciprocal polynomials]%
      {On zeros of self-reciprocal polynomials} 
\author[M. Suzuki]{Masatoshi Suzuki}
\date{Version of \today}
\subjclass[2000]{30C15, 34A55, 34L40}
\keywords{}
\AtBeginDocument{%
\mathtoolsset{showonlyrefs,mathic = true}
\begin{abstract}
We establish a necessary and sufficient condition 
for which all zeros of a self-reciprocal polynomial lie on the unit circle. 
Moreover, we relate the necessary and sufficient condition 
with a canonical system of linear differential equations (in the sense of de Branges). 
This relationship enable us to understand that the property of a self-reciprocal polynomial 
having only zeros on the unit circle 
is equivalent to the positive semidefiniteness of Hamiltonians 
of corresponding canonical systems.  
\end{abstract}
\maketitle
}
\begin{document}
%
%
\section{Introduction}
%
%
A nonzero polynomial $P(x)=c_0 x^n + c_1 x^{n-1} + \cdots + c_{n-1} x + c_n $ 
with {\it real coefficients} 
is called a {\it self-reciprocal polynomial} of degree $n$ 
if $c_0\not=0$ and $P(x)=x^n P(1/x)$,
or equivalently, $c_0\not=0$ and $c_{k}=c_{n-k}$ for every $0\leq k \leq n$. 
The zeros of a self-reciprocal polynomial either lie on the unit circle, $T=\{z \in \C\,|\, |z|=1\}$,  
or are distributed symmetrically with respect to $T$. 
Therefore, one of the basic problem 
is to find a ``nice'' condition of coefficients of a self-reciprocal polynomial for which all its zeros lie on $T$. 
In this paper, we study this problem for 
self-reciprocal polynomials of {\it even degree}. 
The restriction on the degree is not essential, because 
if $P(x)$ is self-reciprocal and of odd degree, 
then there exists a self-reciprocal polynomial $\tilde{P}(x)$ of even degree 
and an integer $r \geq 1$ 
such that $P(x)=(x+1)^{r}\tilde{P}(x)$. 
In contrast, the realness of coefficients are crucial to the results and methods of this paper.  
\medskip

A nonzero polynomial $P(x)=c_0 x^n + c_1 x^{n-1} + \cdots + c_{n-1} x + c_n$ $(c_i \in \C)$ 
is called a {\it self-inversive polynomial} of degree $n$ 
if $c_0\not=0$ and $P(x)=\mu x^n\overline{P(1/\bar{x})}$ for some constant $\mu \in \C$ of modulus $1$,
or equivalently, $c_0\not=0$ and $c_{k}=\mu \overline{c_{n-k}}$ for every $0\leq k \leq n$. 
If all coefficients of a self-inversive polynomial are real, then $\mu$ must be $\pm 1$. 
Therefore, self-reciprocal polynomials are special cases of self-inversive polynomials.   

The Gauss--Lucas theorem~\cite{Lucas74} assert that the convex hull of the zeros of 
any nonconstant complex polynomial contains the zeros of its derivative. 
Further, by a result of Schur~\cite{Schur17} and Cohn~\cite{Cohn22}, 
all zeros of a self-inversive polynomial $P(x)$ lie on $T$ 
if and only if all zeros of $P'(x)$ lie inside or on $T$. 
Chen~\cite{Chen95} found another necessary and sufficient condition 
for which all zeros of a self-inversive polynomial lie on $T$. 
It was rediscovered by Lal{\'i}n--Smyth \cite{LalinSmyth12}
(See the book of Marden~\cite{Marden66} and the survey paper of Milovanovi\'c--Rassias~\cite{MiloRass00}
for several systematic treatments of zeros of polynomials.)
The above results are often useful in order to examine whether all zeros of a given self-inversive (or self-reciprocal) polynomial lie on $T$, 
but a sufficient condition in terms of its coefficient is sometimes more convenient, 
because it may not be easy to check the above criterions for a given polynomial. 

Lakatos~\cite{Lakatos02} presented a simple sufficient condition 
of coefficients of a self-reciprocal polynomial 
for which all its zeros lie on $T$. 
She proved that if $P(x) \in \R[x]$ is a self-reciprocal polynomial of degree $n \geq 2$ 
satisfying  
\[
|c_0| \geq \sum_{k=1}^{n-1} |c_k - c_0|,
\]
then all zeros of $P(x)$ lie on $T$ 
(see Lakatos--Losonczi~\cite{LakatosLosonczi09}, Kwon~\cite{Kwon11} 
and their references for further generalization 
of this kind of conditions). 
Another simple sufficient condition in terms of coefficients 
was found by Chen~\cite{Chen95} and Chinen~\cite{Chinen08}, independently. 
They proved that if $P(x) \in \R[x]$ has the form
\[
P(x) = 
(c_0 x^n + c_1 x^{n-1} + \cdots + c_{k} x^{n-k}) 
+ (c_k x^k + c_{k-1} x^{k-1} + \cdots + c_1 x+  c_0),
\]
and $c_0 > c_1 > \cdots > c_k >0$ ($n \geq k$), 
then all zeros of $P(x)$ lie on $T$. 
As well as these results, known conditions of coefficients of general self-reciprocal polynomials 
for which all its zeros lie on $T$  
are usually sufficient conditions and they are not applied to the case that 
coefficients of middle terms are quite large 
comparing with coefficients of both ends. 

\medskip

The first result of this paper is the construction of a set of $2g$ rational functions 
$R_n(\underline{c})=R_{g,n}(\underline{c})\in \Q(\underline{c})$ ($1 \leq n \leq 2g$) 
of $(g+1)$ variables 
$\underline{c}=(c_0,\cdots,c_g)$ 
for every positive integer $g$ 
such that all zeros of a self-reciprocal polynomial  
\begin{equation} \label{def_Pg}
P_g(x) 
= \sum_{k=0}^{g-1} c_k(x^{(2g-k)} + x^{k}) + c_g x^{g}
\end{equation}
with real coefficients $\underline{c}=(c_0,\cdots,c_g) \in \R^{g+1}$ 
($c_0 \not=0$)
lie on $T$ and {\it simple} if and only if 
$R_{n}(\underline{c})$ is a finite positive real number for every $1 \leq n \leq 2g$ (Theorem \ref{thm_2}).   
The rational functions $R_{n}(\underline{c})$ are constructed in Section \ref{section_2_2}
by using a linear system introduced in Section \ref{section_2_1}. 
The second result is a variant of the first result. 
We construct a set of $2g$ rational functions 
$R_n(\underline{c}\,;q^\omega)=R_{g,n}(\underline{c}\,;q^\omega)\in \Q(\underline{c},q^\omega)$ ($1 \leq n \leq 2g$) 
of $(g+2)$ variables 
$\underline{c}=(c_0,\cdots,c_g)$ and $q^\omega$ 
for every positive integer $g$ 
such that all zeros of a self-reciprocal polynomial \eqref{def_Pg} 
with real coefficients $\underline{c}=(c_0,\cdots,c_g) \in \R^{g+1}$ 
($c_0 \not=0$)
lie on $T$ if and only if 
$R_{n}(\underline{c}\,;q^\omega)$ is a finite real number  
for every $1 \leq n \leq 2g$ and $\omega>0$, 
where $q$ is an arbitrary fixed real number larger than one 
(Theorem \ref{thm_4}). 
Rational functions $R_n(\underline{c}\,;q^\omega)$ are constructed 
in Section \ref{section_2_3} by a way similar to $R_n(\underline{c})$. 
Moreover, it is proved that 
\[
\lim_{q^\omega \to 1^+} R_n(\underline{c}\,;q^\omega) =R_n(\underline{c})
\]
as a rational function of $\underline{c}=(c_0,\cdots,c_g)$ (Theorem \ref{thm_5}). 

Subsequently, we attempt to understand the above positivity conditions 
from a viewpoint of the theory of canonical systems of ordinary linear differential equations. 
After the review of the theory of canonical systems in Section \ref{section_3}, 
we construct two kinds of systems of ordinary linear differential equations attached to self-reciprocal polynomials 
in Section \ref{section_4} (Theorem \ref{thm_6} and \ref{thm_7}). 
It is proved that the system of Section \ref{section_4_1} (resp. Section \ref{section_4_2}) is a canonical systems 
if and only if 
all zeros of a corresponding self-reciprocal polynomial \eqref{def_Pg} 
with real coefficients lie on $T$ and simple (resp. lie on $T$) (Corollary \ref{cor_1} (resp. Corollary \ref{cor_2})). 

We prove the result of Section \ref{section_4_1} (Theorem \ref{thm_6}) in Section \ref{section_5} 
after a preparation of two basic tools. 
By using Theorem \ref{thm_6} and tools in Section \ref{section_5}, 
we prove the results of Section \ref{section_2_2} in Section \ref{section_6}. 
Results of Section \ref{section_4_2} are proved in Section \ref{section_7} 
by a way similar to the proofs in Section \ref{section_5} and \ref{section_6}. 
Finally, we comments on important remaining problems in Section \ref{section_8}.  
\smallskip

This paper is written in self-contained fashion as much as possible. 
\medskip

\noindent
{\bf Acknowlegements}~
This work was supported by KAKENHI (Grant-in-Aid for Young Scientists (B)) No. 21740004. 
%
%
\section{Results I: \\ 
Two kinds of functions 
attached to self-reciprocal polynomials} \label{section_2}
%
%
In this section, 
we construct two kinds of rational functions $R_{n}(\underline{c})$ and $R_{n}(\underline{c}\,;q^\omega)$ 
by using a linear system in order to state a necessary and sufficient conditions 
for all zeros of a self-reciprocal polynomial to lie on $T$. 
Moreover, we clarify the relation between $R_{n}(\underline{c})$ and $R_{n}(\underline{c}\,;q^\omega)$. 
Throughout this section, we fix a positive integer $g$. 

%
%
\subsection{Linear System} \label{section_2_1}
%
%
Firstly, we define the $(2k+2)\times (2k+2)$ matrix $P_k(m_k)$ endowed with the parameter $m_k$
and the $(2k+2)\times(2k+4)$ matrix $Q_k$  for every nonnegative integer $k$ as follows. 
For $k=0,1$, we define 
\[
P_0 := 
\left[
\begin{array}{c|c}
1 & 0 \\ \hline 0 & 1 
\end{array}\right],
\quad 
Q_0 := 
\left[
\begin{array}{cc|cc}
1 & 1 & 0 & 0 \\ \hline 0 & 0 & 1 & -1
\end{array}\right],
\]
\[
P_1(m_1) := \left[
\begin{array}{cc|cc}
1 & 0 & 0 & 0 \\ 
0 & 1 & 0 & 0 \\ \hline
0 & 0 & 1 & 0 \\ \hline
0 & 1 & 0 & -m_1 
\end{array}\right], 
\quad 
Q_1 := \left[
\begin{array}{ccc|ccc}
1 & 0 & 1 & 0 & 0 & 0 \\ 
0 & 1 & 0 & 0 & 0 & 0 \\ \hline 
0 & 0 & 0 & 1 & 0 & -1 \\ \hline 
0 & 0 & 0 & 0 & 0 & 0 
\end{array}\right].
\]
For $k \geq 2$,  we define $P_k(m_k)$ and $Q_k$ blockwisely as follows 
\[
P_k(m_k) := 
\left[\begin{array}{c|c} 
V_k^+ & \pmb{0} \\[2pt] \hline
\pmb{0} & V_k^- \\[2pt] \hline
\pmb{0}I_k & -m_k\cdot \pmb{0}I_k
\end{array}\right], 
\qquad 
Q_k 
:= 
\left[\begin{array}{c|c} 
W_k^+ & \pmb{0} \\[2pt] \hline
\pmb{0} & W_k^- \\[2pt] \hline 
\pmb{0}_{k,k+2} & \pmb{0}_{k,k+2}
\end{array}\right], 
\]
where 
$\pmb{0}I_k 
:=
\begin{bmatrix} 
\pmb{0}_{k,1} & I_k 
\end{bmatrix}$, 
$-m_k\cdot\pmb{0}I_k 
=
\begin{bmatrix} 
\pmb{0}_{k,1} & -m_k\cdot I_k 
\end{bmatrix}$, 
$\pmb{0}_{k,l}$ is the $k \times l$ zero matrix, 
$I_k$ is the identity matrix of size $k$, 
\[
\aligned
V_k^+ 
& := 
\begin{bmatrix}
1 & 0 & 0 & 0 & \cdots & 0 & 0 & 0 \\ 
0 & 1 & 0 & 0 & \cdots & 0 & 0 & 1 \\
0 & 0 & 1 & 0 & \cdots & 0 & 1 & 0 \\
\cdots & \cdots & \cdots & \cdots & \cdots & \cdots &  \cdots & \cdots \\  
0 & 0 & 0 & 1 & 0 & 1 & 0 & 0 \\
0 & 0 & 0 & 0 & 1 & 0 & 0 & 0
\end{bmatrix} 
\quad \left(\frac{k+3}{2}\right)\times\left(k+1\right) 
\quad \text{if $k$ is odd,}
\\
& := 
\begin{bmatrix}
1 & 0 & 0 & 0 & \cdots & \cdots & 0 & 0 & 0 \\ 
0 & 1 & 0 & 0 & \cdots & \cdots & 0 & 0 & 1 \\
0 & 0 & 1 & 0 & \cdots & \cdots & 0 & 1 & 0 \\
\cdots & \cdots & \cdots & \cdots & \cdots & \cdots & \cdots &  \cdots & \cdots \\  
0 & 0 & 0 & 1 & 0 & 0 & 1 & 0 & 0 \\
0 & 0 & 0 & 0 & 1 & 1 & 0 & 0 & 0
\end{bmatrix}
\quad \left(\frac{k+2}{2}\right)\times\left(k+1\right) 
\quad \text{if $k$ is even,}
\endaligned
\]

\[
\aligned
V_k^- 
& := 
\begin{bmatrix}
1 & 0 & 0 & 0 & \cdots & \cdots & \cdots & 0 & 0 & 0 \\ 
0 & 1 & 0 & 0 & \cdots & \cdots & \cdots & 0 & 0 & -1 \\
0 & 0 & 1 & 0 & \cdots & \cdots & \cdots & 0 & -1 & 0 \\
\cdots & \cdots & \cdots & \cdots & \cdots & \cdots & \cdots & \cdots &  \cdots & \cdots \\  
0 & 0 & 0 & 1 & 0 & 0 & 0 & -1 & 0 & 0 \\
0 & 0 & 0 & 0 & 1 & 0 & -1 & 0 & 0 & 0
\end{bmatrix}
\quad \left(\frac{k+1}{2}\right)\times\left(k+1\right) 
\quad \text{if $k$ is odd}, 
\\
& := 
\begin{bmatrix}
1 & 0 & 0 & 0 & \cdots & \cdots & 0 & 0 & 0 \\ 
0 & 1 & 0 & 0 & \cdots & \cdots & 0 & 0 & -1 \\
0 & 0 & 1 & 0 & \cdots & \cdots & 0 & -1 & 0 \\
\cdots & \cdots & \cdots & \cdots & \cdots & \cdots & \cdots &  \cdots & \cdots \\  
0 & 0 & 0 & 1 & 0 & 0 & -1 & 0 & 0 \\
0 & 0 & 0 & 0 & 1 & -1 & 0 & 0 & 0
\end{bmatrix}
\quad \left(\frac{k+2}{2}\right)\times\left(k+1\right) 
\quad \text{if $k$ is even},
\endaligned
\]
and matrices $W_k^{\pm}$ are defined by adding column vectors ${}^{t}(1~0~\cdots~0)$ 
to the right-side end of matrices $V_k^\pm$: 
\[
\aligned
W_k^+ 
& := 
\begin{bmatrix}
1 & 0 & 0 & \cdots & 0 & 0 & 1 \\ 
0 & 1 & 0 & \cdots & 0 & 1 & 0 \\
\cdots & \cdots & \cdots & \cdots & \cdots &  \cdots & \cdots \\  
0 & 0 & 1 & 0 & 1 & 0 & 0 \\
0 & 0 & 0 & 1 & 0 & 0 & 0
\end{bmatrix} 
\quad \left(\frac{k+3}{2}\right)\times\left(k+2\right) 
\quad \text{if $k$ is odd,}
\\
& := 
\begin{bmatrix}
1 & 0 & 0 & \cdots & \cdots & 0 & 0 & 1 \\ 
0 & 1 & 0 & \cdots & \cdots & 0 & 1 & 0 \\
\cdots & \cdots & \cdots & \cdots & \cdots & \cdots &  \cdots & \cdots \\  
0 & 0 & 1 & 0 & 0 & 1 & 0 & 0 \\
0 & 0 & 0 & 1 & 1 & 0 & 0 & 0
\end{bmatrix}
\quad \left(\frac{k+2}{2}\right)\times\left(k+2\right) 
\quad \text{if $k$ is even,}
\endaligned
\]

\[
\aligned
W_k^- 
& := 
\begin{bmatrix}
1 & 0 & 0 & \cdots & \cdots & \cdots & 0 & 0 & -1 \\ 
0 & 1 & 0 & \cdots & \cdots & \cdots & 0 & -1 & 0 \\
\cdots & \cdots & \cdots & \cdots & \cdots & \cdots & \cdots &  \cdots & \cdots \\  
0 & 0 & 1 & 0 & 0 & 0 & -1 & 0 & 0 \\
0 & 0 & 0 & 1 & 0 & -1 & 0 & 0 & 0
\end{bmatrix}
\quad \left(\frac{k+1}{2}\right)\times\left(k+2\right) 
\quad \text{if $k$ is odd,}
\\
& := 
\begin{bmatrix}
1 & 0 & 0 & \cdots & \cdots & 0 & 0 & -1 \\ 
0 & 1 & 0 & \cdots & \cdots & 0 & -1 & 0 \\
\cdots & \cdots & \cdots & \cdots & \cdots & \cdots &  \cdots & \cdots \\  
0 & 0 & 1 & 0 & 0 & -1 & 0 & 0 \\
0 & 0 & 0 & 1 & -1 & 0 & 0 & 0
\end{bmatrix}
\quad \left(\frac{k+2}{2}\right)\times\left(k+2\right) 
\quad \text{if $k$ is even}.
\endaligned
\]

\begin{lemma} \label{lem_201}
For $k \geq 1$, we have 
\[
\det P_k(m_k)
= 
\begin{cases} 
~\varepsilon_{2j+1}\, 2^{j} m_{2j+1}^{j+1} & \text{if $k=2j+1$}, \\
~\varepsilon_{2j}\, 2^{j} m_{2j}^{j} & \text{if $k=2j$}, 
\end{cases} \\
\]
where 
\[
\varepsilon_{2j+1} 
= 
\begin{cases}
 ~+1 & j \equiv 2, 3 ~{\rm mod}\, 4 \\
 ~-1 & j \equiv 0, 1 ~{\rm mod}\, 4
\end{cases}, \quad 
\varepsilon_{2j} 
= 
\begin{cases}
 ~+1 & j \equiv 0, 1 ~{\rm mod}\, 4 \\
 ~-1 & j \equiv 2, 3 ~{\rm mod}\, 4
\end{cases}.
\]
In particular, $P_k(m_k)$ is invertible if and only if $m_k\not=0$. 
\end{lemma}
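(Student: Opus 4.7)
The plan is to reduce $\det P_k(m_k)$ to a purely combinatorial determinant via a short chain of elementary operations, and then to read off the value by pairing rows and columns symmetric about the middle. Recall
\[
P_k(m_k) = \begin{bmatrix} V_k^+ & \pmb{0} \\ \pmb{0} & V_k^- \\ \pmb{0}I_k & -m_k\,\pmb{0}I_k \end{bmatrix},
\]
and observe that the first row of each of $V_k^+$ and $V_k^-$ is the first standard basis vector of the corresponding block. Two successive cofactor expansions---first along the first column of $P_k(m_k)$, then along the first column of the remaining $V_k^-$ block---reduce the problem to a $2k \times 2k$ determinant
\[
D_k = \det \begin{bmatrix} \tilde U_k^+ & \pmb{0} \\ \pmb{0} & \tilde U_k^- \\ I_k & -m_k I_k \end{bmatrix},
\]
where $\tilde U_k^\pm$ is obtained from $V_k^\pm$ by deleting its first row and first column; the signs of these two expansions depend only on the row-counts of $V_k^\pm$, which are read off from the definitions.

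Next I would use the bottom $[I_k \mid -m_k I_k]$ block to eliminate the $-m_k I_k$ part. The column operations $C_{k+s} \mapsto C_{k+s} + m_k C_s$ for $s = 1, \dots, k$ erase that block and plant an $m_k \tilde U_k^+$ block in the top-right; an accompanying row operation using the $I_k$ block then clears $\tilde U_k^+$ from the top-left. After swapping the two column blocks of width $k$---which contributes a sign $(-1)^k$---the matrix becomes block-diagonal, yielding
\[
D_k = (-1)^k \, m_k^{\lceil k/2 \rceil} \, \det N_k,
\qquad N_k = \begin{bmatrix} \tilde U_k^+ \\ \tilde U_k^- \end{bmatrix},
\]
where the exponent equals the number of rows of $\tilde U_k^+$. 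This already accounts for the power of $m_k$ in the statement.

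The crux is therefore to compute $\det N_k$, a $k \times k$ matrix with entries in $\{-1, 0, 1\}$. Every row of $N_k$ is supported in a column pair $\{r,\, k+1-r\}$, with values $(1, 1)$ for rows inherited from $\tilde U_k^+$ and $(1, -1)$ for rows inherited from $\tilde U_k^-$, except that when $k$ is odd the middle row of $\tilde U_k^+$ carries a single $1$ at column $(k+1)/2$. A riffle row permutation that pairs the $r$-th rows of $\tilde U_k^+$ and $\tilde U_k^-$, combined with the column permutation that places columns $r$ and $k+1-r$ side by side, transforms $N_k$ into a block-diagonal matrix whose $2 \times 2$ diagonal blocks are $\bigl[\begin{smallmatrix} 1 & 1 \\ 1 & -1 \end{smallmatrix}\bigr]$ (determinant $-2$), augmented by a trailing $1 \times 1$ block of value $1$ when $k$ is odd. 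Hence $|\det N_k| = 2^{\lfloor k/2 \rfloor}$.

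The main obstacle, and the only delicate point, is the sign bookkeeping. One counts the inversions of the riffle row permutation and of the pair-reversal column permutation, which admit simple closed forms quadratic in $j$ where $k \in \{2j,\, 2j+1\}$; combining these with the signs of the two cofactor expansions and the final column block swap, and reducing modulo $2$ (using $j^2 \equiv j \pmod 2$), one arrives at an overall sign that depends only on $j \bmod 4$ and matches $\varepsilon_k$ case by case. The invertibility assertion then follows at once, since the resulting determinant is a nonzero scalar multiple of $m_k^{\lceil k/2 \rceil}$.
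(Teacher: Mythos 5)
Your argument is correct, and it reaches the formula by a reduction organized differently from the paper's. The paper permutes columns so that the identity columns of $V_k^+$ and $V_k^-$ assemble into an $I_{k+2}$ block at the upper left, eliminates the rows of $\pmb{0}I_k$ and $-m_k\cdot\pmb{0}I_k$ beneath it, and reads the answer off a single residual $k\times k$ block $Z_k$ in which the power of $m_k$ and the factor $2^j$ appear together (as $-m_k$ and $-2m_k$ entries alongside an antidiagonal of $-1$'s). You instead expand away the two $e_1$-columns and then use the $[\,I_k \mid -m_kI_k\,]$ block to factor the determinant as $(-1)^k m_k^{\lceil k/2\rceil}\det N_k$ with $N_k$ a $\{0,\pm1\}$-matrix evaluated by pairing rows and columns into $\bigl[\begin{smallmatrix}1&1\\1&-1\end{smallmatrix}\bigr]$ blocks; this isolates the power of $m_k$ and the power of $2$ at separate stages and makes the magnitude transparent, at the cost of more bookkeeping steps. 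I checked the sign computation you deferred, and it closes: the second cofactor expansion contributes $(-1)^{r^++k+1}=(-1)^j$ in both parities of $k$, the block swap contributes $(-1)^k$, the riffle row permutation contributes $(-1)^{\binom{j}{2}}$, the pair-reversal column permutation has an even number ($2\binom{j}{2}$) of inversions and contributes $+1$, and the $\lfloor k/2\rfloor$ blocks contribute $(-1)^j2^j$ from $(-2)^{\lfloor k/2\rfloor}$; the product is $(-1)^{j+j(j+1)/2}\,2^jm_k^j$ for $k=2j$ and $(-1)^{j+1+j(j+1)/2}\,2^jm_k^{j+1}$ for $k=2j+1$, which reproduces $\varepsilon_k$ in all four residues of $j$ modulo $4$. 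One small caveat: your parenthetical ``reducing modulo $2$ using $j^2\equiv j\pmod 2$'' is not the right reduction --- the quantities $\binom{j}{2}$ and $j(j+1)/2$ are periodic in $j$ modulo $4$, not modulo $2$, which is exactly why the final sign depends on $j\bmod 4$; the four-case check is genuinely needed, but it succeeds.
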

\begin{proof}
This is trivial for $k=1$. 
Suppose that $k=2j+1 \geq 3$ and write $P_k(m_k)$ as $(v_1~\cdots~v_{2k+2})$ by its column vectors $v_l$. 
At first, we make $I_{k+2}$ at the left-upper corner by exchanging columns $v_{(k+5)/2},\cdots,v_{k+1}$ 
and $v_{k+2},\cdots, v_{(3k+3)/2}$ so that
\[
\det P_k(m_k) = \det(v_1~\cdots~v_{(k+3)/2}~v_{k+2}~\cdots~v_{(3k+3)/2}~v_{(k+5)/2}~\cdots~v_{k+1}~v_{(3k+5)/2}~\cdots~v_{2k+2}). 
\]
Then, by eliminating every $1$ and $-m_k$ under $I_{k+2}$ of the left-upper corner, we have 
\[
\det P_k(m_k) = 
\det 
\begin{bmatrix}
I_{k+2} & \ast \\
~\pmb{0}_{k,k+2} & Z_k 
\end{bmatrix}. 
\]
Here $Z_k$ is the $k \times k$ matrix 
\[
Z_k = \begin{bmatrix}
Z_{k,1} & Z_{k,2} \\
\pmb{0}_{j+1,j} & Z_{k,3}
\end{bmatrix}, 
\]
for which $Z_{k,1}$ is the $j\times j$ anti-diagonal matrix with $-1$ on the anti-diagonal line and  
\[
\left[\begin{array}{c}
Z_{k,2} \\
\hline
Z_{k,3}
\end{array}\right]
=
\left[\begin{array}{ccccc}
0 & 0 & \cdots & 0 & -m_k \\
\vdots & \vdots & 0 & \ast & 0 \\
0 & 0 & -m_k & 0 & \vdots \\ 
0 & -m_k & 0 & \cdots & 0 \\
\hline
-m_k & 0 & \cdots & 0 & 0 \\
0 & -2m_k & 0 & \ddots & 0 \\
0 & 0 & -2m_k & \ddots & 0 \\
\vdots & \vdots & \ddots & \ddots & 0 \\ 
0 & 0 & \cdots & 0 & -2m_k \\
\end{array}\right] \quad 
\begin{matrix}
j \times (j+1) \\ \\ \\ \\ \\ 
(j+1) \times (j+1).
\end{matrix}
\]
The above formula of $\det P_k(m_k)$ implies the desired result. 
The case of even $k$ is proved by a way similar to the case of odd $k$.     
\end{proof}

Let 
$\underline{z}=(x_0, \cdots, x_{2g}, y_0, \cdots, y_{2g})$ be $(4g+2)$ indeterminate elements. 
We define the column vector $v_g(0)= v_g(\underline{z}\,;0)$ of length $(4g+2)$ by 
\[
v_g(0) 
= {}^{t}\!
\begin{pmatrix}
x_0 & \cdots & x_{2g} & y_0 & \cdots & y_{2g}  
\end{pmatrix},
\]
where ${}^tv$ means the transpose of a row vector $v$, 
and define column vectors $v_g(n)= v_g(\underline{z}\,;n)$ $(1 \leq n \leq 2g)$ of length $(4g-2n+2)$ inductively as follows: 
\begin{equation}\label{def_m1}
m_{2g-n}(\underline{z}):=\frac{v_g(n-1)[1]+v_g(n-1)[2g-n+2]}{v_g(n-1)[2g-n+3]-v_g(n-1)[4g-2n+4]},
\end{equation}
\begin{equation}\label{def_v1}
v_g(n) := P_{2g-n}(m_{2g-n}(\underline{z}))^{-1}Q_{2g-n} \, v_g(n-1),
\end{equation}
where $P_0(m_0):=P_0$ and $v[j]$ means the $j$th component of a column vector $v$. 
By the following lemma, we confirm that $m_{2g-n}(\underline{z})$ and $v_g(n)$ are well-defined 
as a rational function and a vector valued rational function 
of $\underline{z}=(x_0, \cdots, x_{2g}, y_0, \cdots, y_{2g})$ for every $1 \leq n \leq 2g$, 
respectively.  

\begin{lemma} \label{lem_202} 
Let $K=\Q(x_0, \cdots, x_{2g},y_0, \cdots, y_{2g})$. 
For every $1 \leq n \leq 2g$, 
$m_{2g-n}(\underline{z})$ is a nonzero element of $K$ 
and hence $P_{2g-n}(m_{2g-n}(\underline{z})) \in GL_{4g-2n+2}(K)$. 
\end{lemma}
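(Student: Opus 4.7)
The plan is to prove this by induction on $n$.

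For the base case $n = 1$, substituting the definition of $v_g(0)$ into \eqref{def_m1} gives
\[
m_{2g-1}(\underline{z}) = \frac{v_g(0)[1] + v_g(0)[2g+1]}{v_g(0)[2g+2] - v_g(0)[4g+2]} = \frac{x_0 + x_{2g}}{y_0 - y_{2g}},
\]
whose numerator and denominator are distinct nonzero polynomials in the independent indeterminates, so $m_{2g-1}(\underline{z}) \in K^{\times}$. Lemma \ref{lem_201} then forces $P_{2g-1}(m_{2g-1}(\underline{z}))$ to be invertible over $K$, and \eqref{def_v1} produces a well-defined vector $v_g(1) \in K^{4g}$.

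For the inductive step from $n-1$ to $n$, assume that $m_{2g-j}(\underline{z}) \in K^{\times}$ and $v_g(j) \in K^{4g-2j+2}$ are well-defined for every $1 \leq j \leq n-1$. The task then reduces to showing that the two rational functions
\[
\nu := v_g(n-1)[1] + v_g(n-1)[2g-n+2], \qquad \delta := v_g(n-1)[2g-n+3] - v_g(n-1)[4g-2n+4],
\]
forming the numerator and denominator of $m_{2g-n}(\underline{z})$, are both nonzero in $K$. The strategy I would follow is to exhibit a single concrete specialization $\underline{z} \mapsto \underline{z}^{\ast} \in \R^{4g+2}$ at which the whole recursion runs through step $n$ with each $m_{2g-j}(\underline{z}^{\ast})$ ($1 \leq j \leq n-1$) evaluating to a finite nonzero real number and at which $\nu,\delta$ evaluate to nonzero real numbers. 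Since a rational function of $\underline{z}$ that vanishes as an element of $K$ must vanish at every point of its domain, this forces $\nu, \delta \in K^{\times}$ and closes the induction.

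The main obstacle is constructing such a specialization without circular reference to Theorem \ref{thm_2}, which itself depends on the present Lemma. A natural candidate for $\underline{z}^{\ast}$ is the data attached to a polynomial of the form $P_g(x) = \prod_{k=1}^{g}(x - e^{i\theta_k})(x - e^{-i\theta_k})$ with generic real angles $\theta_k \in (0,\pi)$ — its zeros lie on $T$ and are simple, so the geometric picture underlying the construction makes it plausible that all $m_{2g-j}$ remain finite and nonzero — but to use this without circularity one must either run the recursion on such a test polynomial directly, or perform a leading-monomial analysis: after clearing the common denominator $\prod_{j < n} m_{2g-j}$, each entry of $v_g(n-1)$ becomes a polynomial in $\underline{z}$, and one tracks through the linear action of $P_{2g-j}(m_{2g-j})^{-1}Q_{2g-j}$ the leading monomial of each entry under a suitably chosen graded order on $\Q[\underline{z}]$, verifying that the leading monomials of the first-plus-last $x$-entry and first-minus-last $y$-entry do not cancel. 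This symbolic bookkeeping — disentangling the combinatorial structure of the matrices $P_k$ and $Q_k$ enough to prevent unexpected cancellations at arbitrary recursion depth — is the real difficulty in the proof.
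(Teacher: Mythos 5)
Your strategy --- reduce nonvanishing in $K$ to nonvanishing at a single numerical specialization --- is exactly the paper's, and your base case is correct. But your inductive step is not actually carried out: you identify the need for a point $\underline{z}^{\ast}$ at which the whole recursion runs with every $m_{2g-j}$ finite and nonzero, call producing it ``the real difficulty,'' and then offer only two unexecuted alternatives (running the recursion on a test polynomial, or a leading-monomial analysis whose combinatorics you do not control). As written, the proof is incomplete at precisely the point where the work lies.

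The circularity you fear is not there, and seeing why is what closes the gap. The paper takes $\underline{z}^{\ast}$ to be the initial vector \eqref{def_v0} attached to any self-reciprocal polynomial of degree $2g$ with only simple zeros on $T$ (such polynomials obviously exist, e.g.\ $c_0\prod_{j=1}^{g}(x^2-2\cos\theta_j\, x+1)$ with distinct $\theta_j\in(0,\pi)$), and invokes the \emph{necessity} direction of Theorem \ref{thm_1}: for such a polynomial every $m_{2g-n}(\underline{c}\,;\log q)$ is a finite positive number. That direction is proved in Section \ref{section_6_2} entirely for \emph{numerical} vectors, where the recursion \eqref{def_m1}--\eqref{def_v1} is run step by step and makes sense as long as each successive denominator is nonzero; Lemma \ref{lem_201} alone guarantees invertibility of $P_{2g-n}(m)$ for any nonzero number $m$. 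Theorem \ref{thm_6} carries the explicit hypothesis that $m_{2g-n}\not=0$ and $m_{2g-n}^{-1}\not=0$ for $n\leq n_0$, and Step 3 of Section \ref{section_6_2} is exactly the analysis of what happens when the numerical recursion first breaks down. None of this uses Lemma \ref{lem_202}, which is needed only to make sense of $m_{2g-n}(\underline{z})$ as elements of the function field (e.g.\ so that $R_n(\underline{c})$ is a genuine rational function). So the forward reference is legitimate, and the leading-monomial bookkeeping you propose is unnecessary.
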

\begin{proof} 
By Lemma \ref{lem_201}, it is sufficient to show that 
there exists a numerical vector $\underline{c} \in \R^{4g+2}$ 
such that $|m_{2g-n}(\underline{c})|<\infty$ and $m_{2g-n}(\underline{c})\not=0$ for every $1 \leq n \leq 2g$. 
Existence of such numerical vector $\underline{c}$ 
is guaranteed by the necessity part of Theorem \ref{thm_1} below, 
since it is clear that there exists a self-reciprocal polynomial of degree $2g$ 
with real coefficients having only simple zeros on $T$ for every positive integer $g$.  
\end{proof}
Here we mention that the vector $v_g(n)$ of \eqref{def_v1} can be defined from $v_g(n-1)$ by a slightly different way 
according to the following lemma. 
\begin{lemma} \label{lem_203}
For every $1 \leq n \leq 2g$, we have
\[
\aligned
\,& 
\frac{v_g(n-1)[1]+v_g(n-1)[2g-n+2]}{v_g(n-1)[2g-n+3]-v_g(n-1)[4g-2n+4]} \\
& \qquad \qquad \qquad \qquad=
\frac{(P_{2g-n}(m_{2g-n})^{-1}Q_{2g-n} \, v_g(n-1))[1]}{(P_{2g-n}(m_{2g-n})^{-1}Q_{2g-n} \, v_g(n-1))[2g-n+2]},
\endaligned
\]
that is, the right-hand side is independent of the indeterminate element $m_{2g-n}$.
\end{lemma}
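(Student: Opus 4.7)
My plan is to prove the identity by reading off only the first components of the block system $P_{2g-n}(m_{2g-n})\, w = Q_{2g-n}\, v$, where $v=v_g(n-1)$ and $w=v_g(n)$. The point is that these particular components can be computed without inverting $P_{2g-n}(m_{2g-n})$, and in fact they do not depend on $m_{2g-n}$ at all.

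Set $k=2g-n$, so that $v$ has length $2k+4$ and $w$ has length $2k+2$. Decompose $v={}^t(v^+,v^-)$ with $v^{\pm}\in K^{k+2}$ and $w={}^t(w^+,w^-)$ with $w^{\pm}\in K^{k+1}$. With this splitting the indices in the numerator and denominator of $m_{2g-n}(\underline z)$ translate to
\[
v_g(n-1)[1]=v^+[1],\quad v_g(n-1)[2g-n+2]=v^+[k+2],
\]
\[
v_g(n-1)[2g-n+3]=v^-[1],\quad v_g(n-1)[4g-2n+4]=v^-[k+2],
\]
and similarly $w[1]=w^+[1]$, $w[2g-n+2]=w^-[1]$.

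For $k\ge 2$ the block form of $P_k(m_k)$ and $Q_k$ shows that the first two row-blocks of the system $P_k(m_k)w=Q_kv$ decouple into $V_k^+w^+=W_k^+v^+$ and $V_k^-w^-=W_k^-v^-$, while the coupling condition $\mathbf 0 I_k\, w^+=m_k\,\mathbf 0 I_k\, w^-$ only involves $w^\pm[i]$ for $i\ge 2$. For both parities of $k$, the first row of $V_k^\pm$ is $(1,0,\dots,0)$ and the first row of $W_k^\pm$ is $(1,0,\dots,0,\pm 1)$. Reading the first equation of each decoupled block therefore gives
\[
w^+[1]=v^+[1]+v^+[k+2],\qquad w^-[1]=v^-[1]-v^-[k+2],
\]
independently of $m_k$. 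Dividing yields exactly the definition \eqref{def_m1} of $m_{2g-n}(\underline z)$, and the rational function on the right-hand side is manifestly independent of the indeterminate $m_{2g-n}$.

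The only thing left is to check the two small cases $k=0,1$ that were defined separately. For $k=0$ one has $P_0=I_2$ and $Q_0v={}^t(v_1+v_2,\,v_3-v_4)$, so $w={}^t(v_1+v_2,\,v_3-v_4)$ and the claimed ratio is immediate. For $k=1$, the first three rows of $P_1(m_1)$ coincide with $I_3$ padded with a zero, so rows $1$ and $3$ of $P_1(m_1)w=Q_1v$ read off directly as $w[1]=v_1+v_3$ and $w[3]=v_4-v_6$, again matching \eqref{def_m1}. No step is substantively difficult; the only thing to verify carefully is that the first rows of $V_k^\pm$ and $W_k^\pm$ have the stated form in both parities of $k$, which is visible from the explicit displays.
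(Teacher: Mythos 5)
Your proof is correct, and it takes a genuinely lighter route than the paper's. The paper proves this lemma by writing out the full inverse product $P_k(m_k)^{-1}Q_k$ explicitly as a $2\times 2$ array of blocks $M_{k,1},\dots,M_{k,4}$ and then reading off its first row and its $(k+2)$-th row. You instead avoid inverting $P_k(m_k)$ altogether: you read the two needed components of $w=v_g(n)$ directly from the defining system $P_{2g-n}(m_{2g-n})\,w=Q_{2g-n}\,v$, using only the facts that the first rows of $V_k^{\pm}$ are $(1,0,\dots,0)$ and the first rows of $W_k^{\pm}$ are $(1,0,\dots,0,\pm 1)$, so that the equations $w^{+}[1]=v^{+}[1]+v^{+}[k+2]$ and $w^{-}[1]=v^{-}[1]-v^{-}[k+2]$ are literal rows of the system; your index bookkeeping ($w[1]=w^{+}[1]$, $w[2g-n+2]=w^{-}[1]$, and the four $v$-indices) is right, and your separate treatment of $k=0,1$ is also correct. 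One remark: the observation that the coupling rows $\pmb{0}I_k\,w^{+}=m_k\,\pmb{0}I_k\,w^{-}$ only touch $w^{\pm}[i]$ for $i\geq 2$ is not actually needed --- since $P_k(m_k)$ is invertible over $K(m_k)$ the solution is unique and must satisfy every row of the system, in particular the two rows you single out --- but it does no harm. What your argument buys is economy for this lemma alone; what the paper's heavier computation buys is the explicit formula for $P_k(m_k)^{-1}Q_k$, which is reused several times later (in the proof that $R_n(\underline{c})\in\Q(c_0,\dots,c_g)$, in parts (3), (4) and (7) of Theorem \ref{thm_6}, and in Section \ref{section_7_3}), so the paper's choice is a matter of amortizing the cost rather than of necessity here.
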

\begin{proof} Define $(k+1) \times (k+2)$ matrices 
$M_{k,1}$, $M_{k,2}$, $M_{k,3}$, $M_{k,4}$ by
\[
\begin{bmatrix}
M_{0,1} & M_{0,2} \\ M_{0,3} & M_{0,4}  
\end{bmatrix}
=
\begin{bmatrix}
1 & 1 & 0 & 0 \\  0 & 0 & 1 & -1  
\end{bmatrix}, 
\quad 
\begin{bmatrix}
M_{1,1} & M_{1,2} \\ M_{1,3} & M_{1,4}  
\end{bmatrix}
=
\begin{bmatrix}
1 & 0 & 1 & 0 & 0 & 0 \\  
0 & 1 & 0 & 0 & 0 & 0 \\
0 & 0 & 0 & 1 & 0 & -1 \\ 
0 & 1/m_1 & 0 & 0 & 0 & 0
\end{bmatrix} 
\]
and 
\[
\aligned
M_{k,1}
&= \frac{1}{2}
\begin{bmatrix}
2 & 0 & 0 & 0 & \cdots & 0 & 0 & 0 & 2 \\
0 & 1 & 0 & 0 & \cdots  & 0 & 0 & 1 & 0 \\
0 & 0 & 1 & 0 & \cdots  & 0 & 1 & 0 & 0 \\
\cdots & \cdots & \cdots & \cdots & \cdots & \cdots & \cdots & \cdots & \cdots \\
0 & \cdots & 0 & 1 & 0 & 1 & 0 & \cdots & 0 \\
0 & \cdots & \cdots & 0 & 2 & 0 & \cdots & \cdots & 0 \\
0 & \cdots & 0 & 1 & 0 & 1 & 0 & \cdots & 0 \\
\cdots & \cdots & \cdots  & \cdots & \cdots & \cdots & \cdots & \cdots & \cdots \\
0 & 0 & 1 & 0 & \cdots  & 0 & 1 & 0 & 0 \\
0 & 1 & 0 & 0 & \cdots  & 0 & 0 & 1 & 0 
\end{bmatrix} \quad \text{if $k \geq 3$ is odd,} \\
&= \frac{1}{2}
\begin{bmatrix}
2 & 0 & 0 & 0 & \cdots & \cdots & 0 & 0 & 0 & 2 \\
0 & 1 & 0 & 0 & \cdots & \cdots & 0 & 0 & 1 & 0 \\
0 & 0 & 1 & 0 & \cdots & \cdots & 0 & 1 & 0 & 0 \\
\cdots & \cdots & \cdots & \cdots & \cdots & \cdots & \cdots & \cdots & \cdots & \cdots \\
0 & \cdots & 0 & 1 & 0 & 0 & 1 & 0 & \cdots & 0 \\
0 & \cdots & \cdots & 0 & 1 & 1 & 0 & \cdots & \cdots & 0 \\
0 & \cdots & \cdots & 0 & 1 & 1 & 0 & \cdots & \cdots & 0 \\
0 & \cdots & 0 & 1 & 0 & 0 & 1 & 0 & \cdots & 0 \\
\cdots & \cdots & \cdots & \cdots & \cdots & \cdots & \cdots & \cdots & \cdots & \cdots \\
0 & 0 & 1 & 0 & \cdots & \cdots & 0 & 1 & 0 & 0 \\
0 & 1 & 0 & 0 & \cdots & \cdots & 0 & 0 & 1 & 0 
\end{bmatrix} \quad \text{if $k \geq 2$ is even,}
\endaligned 
\]
\[
\aligned
M_{k,2}
&= \frac{m_k}{2}
\begin{bmatrix}
0 & 0 & 0 & 0 & \cdots & 0 & 0 & 0 & 0 \\
0 & 1 & 0 & 0 & \cdots  & 0 & 0 & -1 & 0 \\
0 & 0 & 1 & 0 & \cdots  & 0 & -1 & 0 & 0 \\
\cdots & \cdots & \cdots & \cdots & \cdots & \cdots & \cdots & \cdots & \cdots \\
0 & \cdots & 0 & 1 & 0 & -1 & 0 & \cdots & 0 \\
0 & \cdots & \cdots & 0 & 0 & 0 & \cdots & \cdots & 0 \\
0 & \cdots & 0 & -1 & 0 & 1 & 0 & \cdots & 0 \\
\cdots & \cdots & \cdots  & \cdots & \cdots & \cdots & \cdots & \cdots & \cdots \\
0 & 0 & -1 & 0 & \cdots  & 0 & 1 & 0 & 0 \\
0 & -1 & 0 & 0 & \cdots  & 0 & 0 & 1 & 0 
\end{bmatrix} \quad \text{if $k \geq 3$ is odd,} \\
&= \frac{m_k}{2}
\begin{bmatrix}
0 & 0 & 0 & 0 & \cdots & \cdots & 0 & 0 & 0 & 0 \\
0 & 1 & 0 & 0 & \cdots & \cdots & 0 & 0 & -1 & 0 \\
0 & 0 & 1 & 0 & \cdots & \cdots & 0 & -1 & 0 & 0 \\
\cdots & \cdots & \cdots & \cdots & \cdots & \cdots & \cdots & \cdots & \cdots & \cdots \\
0 & \cdots & 0 & 1 & 0 & 0 & -1 & 0 & \cdots & 0 \\
0 & \cdots & \cdots & 0 & 1 & -1 & 0 & \cdots & \cdots & 0 \\
0 & \cdots & \cdots & 0 & -1 & 1 & 0 & \cdots & \cdots & 0 \\
0 & \cdots & 0 & -1 & 0 & 0 & 1 & 0 & \cdots & 0 \\
\cdots & \cdots & \cdots & \cdots & \cdots & \cdots & \cdots & \cdots & \cdots & \cdots \\
0 & 0 & -1 & 0 & \cdots & \cdots & 0 & 1 & 0 & 0 \\
0 & -1 & 0 & 0 & \cdots & \cdots & 0 & 0 & 1 & 0 
\end{bmatrix} \quad \text{if $k \geq 2$ is even,}
\endaligned 
\]
\[
\aligned
M_{k,3}
&= \frac{1}{2m_k}
\begin{bmatrix}
0 & 0 & 0 & 0 & \cdots & 0 & 0 & 0 & 0 \\
0 & 1 & 0 & 0 & \cdots  & 0 & 0 & 1 & 0 \\
0 & 0 & 1 & 0 & \cdots  & 0 & 1 & 0 & 0 \\
\cdots & \cdots & \cdots & \cdots & \cdots & \cdots & \cdots & \cdots & \cdots \\
0 & \cdots & 0 & 1 & 0 & 1 & 0 & \cdots & 0 \\
0 & \cdots & \cdots & 0 & 2 & 0 & \cdots & \cdots & 0 \\
0 & \cdots & 0 & 1 & 0 & 1 & 0 & \cdots & 0 \\
\cdots & \cdots & \cdots  & \cdots & \cdots & \cdots & \cdots & \cdots & \cdots \\
0 & 0 & 1 & 0 & \cdots  & 0 & 1 & 0 & 0 \\
0 & 1 & 0 & 0 & \cdots  & 0 & 0 & 1 & 0 
\end{bmatrix} \quad \text{if $k \geq 3$ is odd,} \\
&= \frac{1}{2m_k}
\begin{bmatrix}
0 & 0 & 0 & 0 & \cdots & \cdots & 0 & 0 & 0 & 0 \\
0 & 1 & 0 & 0 & \cdots & \cdots & 0 & 0 & 1 & 0 \\
0 & 0 & 1 & 0 & \cdots & \cdots & 0 & 1 & 0 & 0 \\
\cdots & \cdots & \cdots & \cdots & \cdots & \cdots & \cdots & \cdots & \cdots & \cdots \\
0 & \cdots & 0 & 1 & 0 & 0 & 1 & 0 & \cdots & 0 \\
0 & \cdots & \cdots & 0 & 1 & 1 & 0 & \cdots & \cdots & 0 \\
0 & \cdots & \cdots & 0 & 1 & 1 & 0 & \cdots & \cdots & 0 \\
0 & \cdots & 0 & 1 & 0 & 0 & 1 & 0 & \cdots & 0 \\
\cdots & \cdots & \cdots & \cdots & \cdots & \cdots & \cdots & \cdots & \cdots & \cdots \\
0 & 0 & 1 & 0 & \cdots & \cdots & 0 & 1 & 0 & 0 \\
0 & 1 & 0 & 0 & \cdots & \cdots & 0 & 0 & 1 & 0 
\end{bmatrix} \quad \text{if $k \geq 2$ is even,}
\endaligned 
\]
\[
\aligned
M_{k,4}
&= \frac{1}{2}
\begin{bmatrix}
2 & 0 & 0 & 0 & \cdots & 0 & 0 & 0 & -2 \\
0 & 1 & 0 & 0 & \cdots  & 0 & 0 & -1 & 0 \\
0 & 0 & 1 & 0 & \cdots  & 0 & -1 & 0 & 0 \\
\cdots & \cdots & \cdots & \cdots & \cdots & \cdots & \cdots & \cdots & \cdots \\
0 & \cdots & 0 & 1 & 0 & -1 & 0 & \cdots & 0 \\
0 & \cdots & \cdots & 0 & 0 & 0 & \cdots & \cdots & 0 \\
0 & \cdots & 0 & -1 & 0 & 1 & 0 & \cdots & 0 \\
\cdots & \cdots & \cdots  & \cdots & \cdots & \cdots & \cdots & \cdots & \cdots \\
0 & 0 & -1 & 0 & \cdots  & 0 & 1 & 0 & 0 \\
0 & -1 & 0 & 0 & \cdots  & 0 & 0 & 1 & 0 
\end{bmatrix} \quad \text{if $k \geq 3$ is odd,} \\
&= \frac{1}{2}
\begin{bmatrix}
2 & 0 & 0 & 0 & \cdots & \cdots & 0 & 0 & 0 & -2 \\
0 & 1 & 0 & 0 & \cdots & \cdots & 0 & 0 & -1 & 0 \\
0 & 0 & 1 & 0 & \cdots & \cdots & 0 & -1 & 0 & 0 \\
\cdots & \cdots & \cdots & \cdots & \cdots & \cdots & \cdots & \cdots & \cdots & \cdots \\
0 & \cdots & 0 & 1 & 0 & 0 & -1 & 0 & \cdots & 0 \\
0 & \cdots & \cdots & 0 & 1 & -1 & 0 & \cdots & \cdots & 0 \\
0 & \cdots & \cdots & 0 & -1 & 1 & 0 & \cdots & \cdots & 0 \\
0 & \cdots & 0 & -1 & 0 & 0 & 1 & 0 & \cdots & 0 \\
\cdots & \cdots & \cdots & \cdots & \cdots & \cdots & \cdots & \cdots & \cdots & \cdots \\
0 & 0 & -1 & 0 & \cdots & \cdots & 0 & 1 & 0 & 0 \\
0 & -1 & 0 & 0 & \cdots & \cdots & 0 & 0 & 1 & 0 
\end{bmatrix} \quad \text{if $k \geq 2$ is even.}
\endaligned 
\]
Then, we find that 
\[
P_k(m_k)^{-1}Q_k=
\begin{bmatrix}
M_{k,1} & M_{k,2} \\ M_{k,3} & M_{k,4}   
\end{bmatrix} \quad (2k+2) \times (2k+4)
\]
for every $0 \leq k \leq 2g$. 
This formula of $P_k(m_k)^{-1}Q_{k}$ shows that 
\[
\aligned
(P_{2g-n}(m_{2g-n})^{-1}&Q_{2g-n}v_g(n-1))[1] \\ 
& =v_g(n-1)[1]+v_g(n-1)[2g-n+2], \\
(P_{2g-n}(m_{2g-n})^{-1}&Q_{2g-n}v_g(n-1))[2g-n+2] \\ 
& =v_g(n-1)[2g-n+3]-v_g(n-1)[4g-2n+4]
\endaligned
\]
for every $1 \leq n \leq 2g$. These equalities imply the lemma. 
\end{proof}
By Lemma \ref{lem_203}, we can define $v_g(n)$ by 
taking 
$\tilde{v}_g(n)=P_{2g-n}(m_{2g-n})^{-1}Q_{2g-n} \, v_g(n-1)$ 
for $v_g(n-1)$ 
and then substituting the value $\tilde{v}_g(n)[1]/\tilde{v}_g(n)[2g-n+2]$ into $\tilde{v}_g(n)$. 

Anyway, staring from the initial vector $v_g(0)$, 
$2g$ vectors 
\[
v_g(1),\, v_g(2), \cdots, v_g(2g-1), \, v_g(2g) 
\]
are defined by using $P_k(m_k)$ and $Q_k$. 
They have entries in $\Q(x_0,\cdots,x_g,y_0,\cdots,y_g)$, 
in other words, their entires are functions of the initial vector $v_g(0)$. 

%
%
\subsection{The first result} \label{section_2_2}
%
%
Let $\underline{c}=(c_0,\cdots,c_g)$ be $(g+1)$ indeterminate elements 
and let $q$ be a real variable. We take the column vector 
\begin{equation} \label{def_v0}
\aligned
v_g(0) = 
\begin{bmatrix}
{\mathbf a}_g(0) \\
{\mathbf b}_g(0)
\end{bmatrix}, \quad 
{\mathbf a}_g(0) =
\begin{bmatrix}
c_0 \\ c_1 \\ \vdots \\ c_{g-1} \\ c_g \\ c_{g-1} \\ \vdots \\ c_1 \\ c_0
\end{bmatrix}, \quad  
{\mathbf b}_g(0) = 
\begin{bmatrix}
c_0 \log(q^g) \\ c_1 \log(q^{g-1}) \\ \vdots \\ c_{g-1} \log q \\ 0 \\ -c_{g-1} \log q \\ \vdots \\ -c_1 \log(q^{g-1}) \\ - c_0  \log(q^{g})
\end{bmatrix}
\endaligned
\end{equation}
of length $(4g+2)$ 
as the initial vector of the system consisting of \eqref{def_m1} and \eqref{def_v1}.  
Then $2g$ column vectors 
\[
v_g(1),\, v_g(2), \cdots, v_g(2g-1), \, v_g(2g)
\]
are defined with enties in $\Q(c_1,\cdots,c_g,\log q)$ 
and $m_{2g-n}(\underline{c}\,;\log q) \in \Q(c_1,\cdots,c_g,\log q)$ 
are defined by
\begin{equation}\label{def_m2}
\aligned
m_{2g-n}(\underline{c}\,;\log q):
& = \frac{v_g(n-1)[1]+v_g(n-1)[2g-n+2]}{v_g(n-1)[2g-n+3]-v_g(n-1)[4g-2n+4]} \\
& =\frac{(P_{2g-n}(m_{2g-n})^{-1}Q_{2g-n} \, v_g(n-1))[1]}{(P_{2g-n}(m_{2g-n})^{-1}Q_{2g-n} \, v_g(n-1))[2g-n+2]} \\
& = \frac{v_g(n)[1]}{v_g(n)[2g-n+2]}
\endaligned
\end{equation}
for $1 \leq n \leq 2g$ 
according to \eqref{def_m1} and \eqref{def_v1}, 
where the second equality follows from Lemma \ref{lem_203} 
and the third equality is definition \eqref{def_v1}. 
\smallskip

Now the first result is stated as follows. 
 
\begin{theorem}\label{thm_1}
Let $g \geq 1$ and $q>1$. Let $P_g(x)$ be a self-reciprocal polynomial \eqref{def_Pg} 
of degree $2g$ with real coefficients $\underline{c}=(c_0,\cdots,c_g)$. 
Define $2g$ numbers $m_{2g-n}(\underline{c}\,;\log q)$ $(1\leq n \leq 2g)$ 
by substituting the numerical vector $\underline{c}$ into  \eqref{def_m2}. 
Then all zeros of $P_g(x)$ lie on $T$ and simple
if and only if 
\[
m_{2g-n}(\underline{c}\,;\log q) >0 \quad \text{and} \quad m_{2g-n}(\underline{c}\,;\log q)^{-1} >0
\]
for every $1 \leq n \leq 2g$. 
\end{theorem}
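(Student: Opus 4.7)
My plan is to translate the zeros-on-$T$ question into a real-root problem via the Chebyshev substitution $t = x + x^{-1}$, and then identify the iteration of Section \ref{section_2_1} with a matrix-valued Schur/Euclidean-type reduction of the associated real polynomial. Under $t = x + x^{-1}$, the self-reciprocal polynomial satisfies $P_g(x)/x^g = \tilde P_g(t)$ for a real polynomial $\tilde P_g$ of degree $g$, and all zeros of $P_g$ lie on $T$ and are simple if and only if $\tilde P_g$ has $g$ distinct real zeros in the open interval $(-2,2)$. This makes the presence of $2g$ positivity conditions plausible: one expects roughly $g$ conditions to force the roots of $\tilde P_g$ to be real and $g$ more to confine them to the prescribed interval, which is exactly what the pair of requirements ``$m_{2g-n}>0$ and $m_{2g-n}^{-1}>0$'' encodes (finite and positive).

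I would next interpret the initial vector. The upper half $\mathbf{a}_g(0)$ is the palindromic coefficient vector of $P_g$, while $\mathbf{b}_g(0)_j = a_j(g-j)\log q$ is the term-by-term $s$-derivative at $s=0$ of $q^{gs}P_g(q^{-s})$; so the pair $(\mathbf{a}_g(0),\mathbf{b}_g(0))$ encodes $P_g$ together with a ``logarithmic $q$-companion'' whose presence breaks the symmetry that would otherwise make the denominator in \eqref{def_m1} vanish. Using the block formula of Lemma \ref{lem_203} for $P_k^{-1}Q_k$, a single iteration $v_g(n-1)\mapsto v_g(n)$ averages the upper block (yielding a palindromic quotient) and weighted-differences the lower block, with scalings $m_{2g-n}$ and $m_{2g-n}^{-1}$. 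My main algebraic claim is that this is precisely the coefficient-level realization of dividing the current self-reciprocal polynomial by a real factor $x^2 - \lambda x + 1$, where $\lambda\in[-2,2]$ corresponds to one conjugate pair of zeros on $T$ and is dictated by the ratio in \eqref{def_m2}; in particular, finiteness and positivity of $m_{2g-n}$ is equivalent to $|\lambda|<2$ together with distinctness of this pair from all previously extracted ones.

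With that correspondence in hand, the theorem follows by induction on $g$. The base case $g=1$ is direct: $P_1(x) = c_0(x^2+1) + c_1 x$ has simple zeros on $T$ iff $c_1^2<4c_0^2$, and one checks by hand from \eqref{def_m2} that this matches the positivity of $m_1$ and $m_0$. For the induction step, if $m_{2g-1}$ is a finite positive real, then $v_g(1)$ has exactly the structural form of $v_{g-1}(0)$ for the reduced polynomial $P_g(x)/(x^2-\lambda x+1)$, so the induction hypothesis applies; conversely, if all zeros lie on $T$ and are simple, iterated extraction of conjugate pairs yields a sequence of positive $m$'s. Note that the parameter $\log q$ enters only as a symmetry breaker in the denominator of \eqref{def_m1} and will disappear in the inductive recursion of the structure of $v_g(n)$.

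The main obstacle is the bookkeeping behind the single-step claim of the second paragraph. Lemma \ref{lem_203} exhibits $P_k^{-1}Q_k$ as four overlapping banded blocks with alternating signs, and one must carefully align the indexing of $v_g(n)$ with the coefficient vector and $q$-companion of $P_g(x)/(x^2-\lambda x+1)$. The most delicate point is verifying that the transformed lower block really is the term-by-term $s$-derivative at $s=0$ of $q^{(g-1)s}\bigl(P_g/(x^2-\lambda x+1)\bigr)(q^{-s})$, so that the induction closes on the same form of vector; this is where the precise weighted shape of $\mathbf{b}_g(0)$ is used, and where the geometric meaning of the ratio in \eqref{def_m2} as the extracted angle parameter has to be read off from the matrix identity. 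Once that identification is in place, both implications of the theorem reduce to the elementary fact that a product of distinct real quadratics $x^2 - \lambda_k x + 1$ with $|\lambda_k|<2$ has all zeros on $T$ and simple.
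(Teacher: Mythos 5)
Your plan hinges on the claim that a single iteration $v_g(n-1)\mapsto v_g(n)$ is the coefficient-level realization of dividing the current self-reciprocal polynomial by a genuine quadratic factor $x^2-\lambda x+1$ of $P_g$, with $\lambda$ read off from the ratio in \eqref{def_m2}. This claim is false, and the induction built on it does not close. First, the count is wrong: there are $2g$ iterations but only $g$ quadratic factors, and each step shortens the vector by $2$ entries while division by a quadratic would have to shorten the coefficient data by $4$. Second, the explicit formula \eqref{vec_v1} for $v_g(1)$ shows that its upper block is $(2c_0,\tfrac{2g-1}{g}c_1,\dots,c_g,\dots,\tfrac1g c_1)$: it still involves all of $c_0,\dots,c_g$, is no longer palindromic, and has length $4g$ rather than the length $4g-2$ of $v_{g-1}(0)$; so $v_g(1)$ is not ``$v_{g-1}(0)$ for the reduced polynomial'' in any structural sense. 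Third, and decisively, there is a Galois-theoretic obstruction: every $m_{2g-n}(\underline{c}\,;\log q)$ is a \emph{rational} function of $\underline{c}$, whereas the individual parameters $\lambda_j$ of the quadratic factors generate an extension with group $\mathfrak{S}_g\ltimes(\Z/2\Z)^g$ for generic $\underline{c}$ (this is exactly the point made in Section \ref{section_8}); no step of a rational iteration can single out one conjugate pair of zeros. The formulas in Section \ref{section_8} expressing $R_n(\underline{c})$ as \emph{symmetric} functions of $\lambda_1,\dots,\lambda_g$ confirm that the recursion never isolates a single $\lambda_j$. Your base case $g=1$ is correct but does not rescue the induction step.

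What the iteration actually does is different in kind: writing $E_q(a,z)=A_q(a,z)-iB_q(a,z)$, each step multiplies by a $2\times2$ transfer matrix whose entries are degree-one in $x^{1/2}=q^{iz/2}$ (see the factorization of $P_g$ at the start of Section \ref{section_8}), reducing the exponential type of $E_q(a,\cdot)$ by $\tfrac12\log q$ rather than splitting off a scalar quadratic. The paper's proof exploits this through the Hermite--Biehler/de Branges framework: sufficiency follows because positivity of all $m_{2g-n}$ makes the Hamiltonian \eqref{def_Hq} positive definite, so the kernel identity \eqref{lem_602_3} forces $|E_q(a,z)|>|E_q^\sharp(a,z)|$ for ${\rm Im}\,z>0$, i.e.\ $E_q\in{\rm HB}$, which by Lemma \ref{lem_401} is equivalent to $P_g$ having simple zeros on $T$; necessity is a case analysis in which $m=0$ or $m^{-1}=0$ produces a mean-type mismatch between the two sides of \eqref{prop_601_2}, and $m<0$ produces a growth contradiction via the de Branges norm inequality. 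If you want a purely algebraic proof in the spirit you propose, the correct classical analogue is a Schur--Cohn/Wall continued-fraction test, in which each step is a M\"obius/transfer-matrix reduction and not an extraction of a root pair; as written, the central identification in your second paragraph cannot be repaired.
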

\begin{proof} 
This is proved in Section \ref{section_6}
\end{proof}
\begin{remark} 
The necessity part of Theorem \ref{thm_1} shows that 
$v_g(n)$ and $m_{2g-n}(\underline{c},\,;\log q)$ 
are well-defined for the special initial vector \eqref{def_v0} as well as in the proof of Lemma \ref{lem_202}.
\end{remark}
\begin{remark} 
The criterion of Theorem \ref{thm_1} can be written as    
$m_{2g-n}(\underline{c}\,;\log q)>0$ and $m_{2g-n}(\underline{c}\,;\log q)^{-1}\not=0$. 
The reason why we state it as in Theorem \ref{thm_1} is for the convenience of later subjects.
See \eqref{def_mq}, \eqref{def_Hq} and Corollary \ref{cor_1} below. 
It is similar about Theorem \ref{thm_2}, \ref{thm_3}, \ref{thm_4} below.
\end{remark}
\begin{remark} The strict inequalities $m_{2g-n}(\underline{c}\,;\log q)>0$ and 
$m_{2g-n}(\underline{c}\,;\log q)^{-1}>0$ are essential. 
In fact, the self-reciprocal polynomial $P_2(x)=4(x^4+1)-16(x^3+x)+23x^2 =(2x^2-3x+2)(2x^2-5x+2)$ 
has a zero outside $T$ but $m_{3}=(2\log q)^{-1}>0$, $m_2=m_1=0$, $m_{0}=(126\log q)^{-1}>0$ 
for any $q>1$. 
\end{remark}

The criterion of Theorem \ref{thm_1} is independent of the choice of $q>1$. 
In order to clarify such thing, we modify the statement of Theorem \ref{thm_1} as follows. 

In addition to definition \eqref{def_m2}, 
we set the convention 
\begin{equation} \label{def_m2_2}
m_{2g}(\underline{c}\,;\log q):=\frac{1}{g \log q}. 
\end{equation}
We define $R_n(\underline{c})=R_{g,n}(\underline{c})$ for $0 \leq n \leq 2g$ by 
\[
R_0(\underline{c}):=1
\]
and 
\begin{equation} \label{def_R}
R_{n}(\underline{c})
:=
\begin{cases}
\displaystyle{\prod_{j=0}^{J} \frac{m_{2g-(2j+1)}(\underline{c}\,;\log q)}{m_{2g-2j}(\underline{c}\,;\log q)}} & \text{if $n=2J+1\geq 1$}, \\
\displaystyle{\prod_{j=0}^{J} \frac{m_{2g-(2j+2)}(\underline{c}\,;\log q)}{m_{2g-(2j+1)}(\underline{c}\,;\log q)}} & \text{if $n=2J+2 \geq 2$}.
\end{cases}
\end{equation}
By the definition, we have
\begin{equation} \label{rel_mR}
m_{2g-n}(\underline{c}\,;\log q)
=\frac{1}{g\log q}\,R_{n-1}(\underline{c})R_n(\underline{c}) 
\end{equation}
for every $1 \leq n \leq 2g$. 

\begin{lemma} 
We have $R_n(\underline{c}) \in \Q(c_0,\cdots,c_g)$ for every $0 \leq n \leq 2g$. 
\end{lemma}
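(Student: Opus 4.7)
The plan is to track the dependence of the iterates $v_g(n)$ on $\log q$ and show that a single factor of $\log q$ sits in the bottom half of the vector throughout the iteration, so that every $m_{2g-n}(\underline{c}\,;\log q)$ is of the form $(\log q)^{-1}\mu_n(\underline{c})$ with $\mu_n\in\Q(\underline{c})$. Once this is established, the ratios appearing in the definition \eqref{def_R} of $R_n(\underline{c})$ cancel the $\log q$ exactly.

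First, I would split $v_g(n)$ into its top half $\mathbf{a}_n$ and bottom half $\mathbf{b}_n$ of lengths $2g-n+1$ each, and prove by induction on $n$ the following structural claim: every entry of $\mathbf{a}_n$ lies in $\Q(\underline{c})$, while every entry of $\mathbf{b}_n$ equals $\log q$ times an element of $\Q(\underline{c})$. The base case $n=0$ is immediate from \eqref{def_v0}. For the inductive step, I would use the block decomposition $P_{2g-n}(m_{2g-n})^{-1}Q_{2g-n}=\begin{bmatrix}M_{2g-n,1}&M_{2g-n,2}\\M_{2g-n,3}&M_{2g-n,4}\end{bmatrix}$ computed in the proof of Lemma~\ref{lem_203}, together with the crucial observation there that $M_{k,1}$ and $M_{k,4}$ do not involve $m_k$, while $M_{k,2}$ is linear in $m_k$ and $M_{k,3}$ is linear in $m_k^{-1}$.

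Granting the claim for $n-1$, the formula for $m_{2g-n}$ given by \eqref{def_m2} has a numerator that is a linear combination of entries of $\mathbf{a}_{n-1}$ (hence in $\Q(\underline{c})$) and a denominator that is a linear combination of entries of $\mathbf{b}_{n-1}$ (hence $\log q$ times an element of $\Q(\underline{c})$). Therefore $m_{2g-n}(\underline{c}\,;\log q)=(\log q)^{-1}\mu_n(\underline{c})$ for some $\mu_n\in\Q(\underline{c})$. Next, $\mathbf{a}_n=M_{2g-n,1}\mathbf{a}_{n-1}+M_{2g-n,2}\mathbf{b}_{n-1}$; the first term is in $\Q(\underline{c})$ by the induction hypothesis, and in the second term the factor $m_{2g-n}$ contributes $(\log q)^{-1}$ while $\mathbf{b}_{n-1}$ contributes $\log q$, so the $\log q$ cancels and the sum lies in $\Q(\underline{c})$. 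Similarly $\mathbf{b}_n=M_{2g-n,3}\mathbf{a}_{n-1}+M_{2g-n,4}\mathbf{b}_{n-1}$ is $\log q$ times an element of $\Q(\underline{c})$, since $M_{2g-n,3}$ contributes $(\log q)$ via $m_{2g-n}^{-1}$ and the second summand already has the required form. This closes the induction.

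Finally, I would read off the conclusion from \eqref{def_R}. Each factor of $R_n$ is a ratio $m_{2g-a}/m_{2g-b}$ with $a,b\in\{0,1,\dots,2g\}$. When $a,b\ne 0$ the two $(\log q)^{-1}$ factors cancel and the ratio is $\mu_a/\mu_b\in\Q(\underline{c})$. The only exceptional ratio is $m_{2g-1}/m_{2g}$ appearing at $j=0$ in $R_{2J+1}$, and by the convention \eqref{def_m2_2} it equals $g\log q\cdot(\log q)^{-1}\mu_1(\underline{c})=g\mu_1(\underline{c})\in\Q(\underline{c})$. Hence $R_n(\underline{c})\in\Q(\underline{c})$ for all $0\le n\le 2g$, as claimed. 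The main obstacle is purely bookkeeping: verifying from the explicit block forms in the proof of Lemma~\ref{lem_203} that $M_{k,2}$ is proportional to $m_k$ and $M_{k,3}$ to $m_k^{-1}$, while $M_{k,1},M_{k,4}$ are independent of $m_k$, which is exactly what makes the induction carry a single power of $\log q$ across the iteration.
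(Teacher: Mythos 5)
Your proposal is correct and follows essentially the same route as the paper: an induction showing the top half of $v_g(n)$ lies in $\Q(\underline{c})$ and the bottom half is $\log q$ times an element of $\Q(\underline{c})$, using the block formula for $P_k(m_k)^{-1}Q_k$ from the proof of Lemma~\ref{lem_203}, followed by the observation that the $\log q$ factors cancel in the ratios defining $R_n(\underline{c})$. Your explicit treatment of the $m_{2g}$ convention is a slightly more careful rendering of the paper's remark that numerator and denominator contain the same number of $m$'s.
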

\begin{proof} 
Put $F=\Q(c_0,\cdots,c_g)$. 
At first, we show that $v_g(n)[k] \in F$ for $1 \leq k \leq 2g-n+1$ 
and  $(\log q)^{-1}v_g(n)[k] \in F$ for $2g-n+2 \leq k \leq 4g-2n+2$. 
It is clear for $v_g(0)$ by definition \eqref{def_v0}. 
Assume that it holds for $v_g(n-1)$. 
Then, by definition \eqref{def_m2}, 
we have $m_{2g-n}(\underline{c}\,;\log q)=(\log q)^{-1}\mu_{2g-n}(\underline{c})$ 
for some $\mu_{2g-n}(\underline{c}) \in F$. 
By applying the formula of $P_{k}(m_{k})^{-1}Q_{k}$ in the proof of Lemma \ref{lem_203} to $k=2g-n$, 
we obtain 
$v_g(n)[k] \in F$ for $1 \leq k \leq 2g-n+1$ 
and $(\log q)^{-1}v_g(n)[k] \in F$ for $2g-n+2 \leq k \leq 4g-2n+2$. 
Hence $(\log q)m_{2g-n}(\underline{c}\,;\log q) \in F$ 
for every $1 \leq n \leq 2g$ by induction. 

By definition \eqref{def_R}, 
$R_n(\underline{c})$ has the same number of $m_{2g-j}(\underline{c}\,;\log q)$'s in the denominator and the numerator. 
Hence $R_n(\underline{c}) \in F$ 
for every $0 \leq n \leq 2n$.
\end{proof}

Let $\underline{c} \in \R^{g+1}$ and $q>1$. 
If $R_n(\underline{c})>0$  for every $1 \leq n \leq 2g$, 
then $m_{2g-n}(\underline{c}\,;\log q)>0$ for every $1 \leq n \leq 2g$ by \eqref{rel_mR}. 
Conversely, if $m_{2g-n}(\underline{c}\,;\log q)>0$ for every $1 \leq n \leq 2g$, 
then $R_{n-1}(\underline{c})$ and $R_{n}(\underline{c})$ must have the same sign for every $1 \leq n \leq 2g$ by \eqref{rel_mR}. 
However, $R_0(\underline{c})=1>0$ by the definition. 
Hence $R_n(\underline{c})>0$  for every $1 \leq n \leq 2g$. 
As a consequence, Theorem \ref{thm_1} is equivalent to the following statement. 

\begin{theorem} \label{thm_2}
Let $P_g(x)$ be a self-reciprocal polynomial \eqref{def_Pg} of degree $2g>0$ 
with real coefficients $\underline{c}=(c_0,\cdots,c_g)$. 
Define $2g$ numbers $R_n(\underline{c})$ by substituting $\underline{c}$ into \eqref{def_R}.  
Then all zeros of $P_g(x)$ lie on $T$ and simple if and only if 
\[
R_{n}(\underline{c}) >0 \quad \text{and} \quad R_{n}(\underline{c})^{-1}>0
\]
for every $1 \leq n \leq 2g$. 
\end{theorem}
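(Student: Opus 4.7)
The plan is to reduce Theorem \ref{thm_2} to Theorem \ref{thm_1} via the product relation \eqref{rel_mR}, which reads $m_{2g-n}(\underline{c}\,;\log q) = (g\log q)^{-1}R_{n-1}(\underline{c})R_n(\underline{c})$ for $1 \le n \le 2g$. I would fix any $q > 1$ once and for all; the criterion will be independent of this choice, since the preceding lemma places each $R_n$ in $\Q(c_0,\ldots,c_g)$. The condition in Theorem \ref{thm_1} is that every $m_{2g-n}$ is a finite positive real, and the condition in Theorem \ref{thm_2} is the same for every $R_n$ with $1 \le n \le 2g$, so the whole argument is a sign and finiteness propagation anchored at $R_0(\underline{c}) = 1$.

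First I would dispose of the ``if'' direction. Assume $R_n(\underline{c}) > 0$ and $R_n(\underline{c})^{-1} > 0$ (equivalently, $R_n(\underline{c}) \in (0,\infty)$) for $1 \le n \le 2g$. Since $R_0(\underline{c}) = 1$ by definition, each product $R_{n-1}(\underline{c})R_n(\underline{c})$ is finite and strictly positive, so \eqref{rel_mR} forces every $m_{2g-n}(\underline{c}\,;\log q)$ to be finite and strictly positive. Theorem \ref{thm_1} then yields that all zeros of $P_g(x)$ lie on $T$ and are simple.

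For the ``only if'' direction I would proceed by induction on $n$. Suppose all zeros of $P_g$ lie on $T$ and are simple; Theorem \ref{thm_1} gives that $m_{2g-n}(\underline{c}\,;\log q)$ and its reciprocal are positive for every $1 \le n \le 2g$. At the base step $n = 1$, the relation \eqref{rel_mR} reads $m_{2g-1} = R_0(\underline{c})\,R_1(\underline{c})/(g\log q) = R_1(\underline{c})/(g\log q)$, and finite positivity of $m_{2g-1}$ and of $g\log q$ forces $R_1(\underline{c}) \in (0,\infty)$. For the inductive step, assuming $R_{n-1}(\underline{c}) \in (0,\infty)$, finite positivity of $m_{2g-n} = R_{n-1}(\underline{c})\,R_n(\underline{c})/(g\log q)$ immediately yields $R_n(\underline{c}) \in (0,\infty)$.

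No real obstacle is expected here; essentially the whole argument is sketched already in the paragraph preceding the statement of the theorem. The one point worth emphasizing is that the $q$-independence of the resulting criterion is automatic from the construction: the $R_n$ belong to $\Q(c_0,\ldots,c_g)$ and the equivalence argument above works uniformly in any fixed $q>1$, so the conditions on $R_n(\underline{c})$ characterize the zero behavior of $P_g$ intrinsically in $\underline{c}$.
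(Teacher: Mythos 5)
Your proposal is correct and follows essentially the same route as the paper: the paper derives Theorem \ref{thm_2} from Theorem \ref{thm_1} in the paragraph immediately preceding its statement, using the relation \eqref{rel_mR} and the anchor $R_0(\underline{c})=1$ to propagate positivity and finiteness in both directions, exactly as you do. Your explicit induction and the remark on $q$-independence only make the paper's brief argument more detailed.
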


\noindent
For small positive integer $g$, we can calculate $R_n(\underline{c})$ by hand according to the definition. 
Because $R_1(\underline{c})=1$ by the definition, 
interesting values are $R_2(\underline{c}),\cdots,R_{2g}(\underline{c})$. 
For example, we have the following small table:

$\bullet$ $g=1$, $\displaystyle{
R_{2}(c_0,c_1) = \frac{2c_0 + c_1}{2c_0 - c_1}}$. 

$\bullet$ $g=2$, $R_{n}=R_{n}(c_0,c_1,c_2)$ ($2 \leq n \leq 4$), 
\[
R_{2} = \frac{4c_0+c_1}{4c_0-c_1}, \quad 
R_{3} = \frac{8c_0^2-2c_1^2+4c_0c_2}{8c_0^2+c_1^2-4c_0c_2}, \quad 
R_{4} = \frac{2c_0+2c_1+c_2}{2c_0-2c_1+c_2}.
\]

$\bullet$ $g=3$, $R_{n}=R_{n}(c_0,c_1,c_2,c_3)$ ($2 \leq n \leq 6$), 
\[
\aligned
R_{2} &= \frac{6c_0+c_1}{6c_0-c_1}, \quad 
R_{3} = \frac{18c_0^2-3c_1^2+6c_0c_2}{18c_0^2+2c_1^2-6c_0c_2}, \\
R_{4} &= \frac{36c_0^3+6c_0^2c_1-c_0c_1^2+4c_1^3-14c_0c_1c_2+c_1^2c_2-4c_0c_2^2+18c_0^2c_3+3c_0c_1c_3}{36c_0^3-6c_0^2c_1-c_0c_1^2-4c_1^3+14c_0c_1c_2+c_1^2c_2-4c_0c_2^2-18c_0^2c_3+3c_0c_1c_3}, \\
R_{5} &= 
(108c_0^4-21c_0^2c_1^2-12c_1^4+108c_0^3c_2+42c_0c_1^2c_2 \\& \qquad -12c_0^2c_2^2+3c_1^2c_2^2-12c_0c_2^3-54c_0^2c_1c_3-6c_1^3c_3+30c_0c_1c_2c_3-27c_0^2c_3^2)/ \\
& \quad\,\, 
(108c_0^4+9c_0^2c_1^2+8c_1^4-108c_0^3c_2-42c_0c_1^2c_2 \\& \qquad  +36c_0^2c_2^2+c_1^2c_2^2-4c_0c_2^3+54c_0^2c_1c_3-4c_1^3c_3+18c_0c_1c_2c_3-27c_0^2c_3^2),
\\
R_{6} &= \frac{2c_0+2c_1+2c_2+c_3}{2c_0-2c_1+2c_2-c_3}.
\endaligned
\]
See also Section \ref{section_8} (1) for another formula of $R_n(\underline{c})$. 

%
%
\subsection{The second result} \label{section_2_3}
%
%
\noindent 
In order to deal with the case that $P_g(x)$ may have a multiple zero on $T$, 
we consider the following variant of $m_{2g-n}(\underline{c}\,;\log q)$ and $R_n(\underline{c})$.   
\medskip

In stead of the vector \eqref{def_v0}, we take the column vector 
\begin{equation} \label{def_v0_2}
v_g(0) 
=
\begin{bmatrix}
{\mathbf a}_{g,\omega}(0) \\
{\mathbf b}_{g,\omega}(0)
\end{bmatrix}, \quad 
{\mathbf a}_{g,\omega}(0)={\mathbf b}_{g,\omega}(0)
=
\begin{bmatrix}
c_0 \, q^{g \omega} \\
c_1 \, q^{(g-1)\omega} \\
\vdots \\
c_{g-1} \, q^{\omega} \\
c_g \\  
c_{g-1} \, q^{-\omega} \\ 
\vdots \\ 
c_0 \, q^{-g\omega}
\end{bmatrix}  
\end{equation}
of length $(4g+2)$ as the initial vector 
of the system consisting of \eqref{def_m1} and \eqref{def_v1}. Then $2g$ vectors 
\[
v_g(1),\, v_g(2), \cdots, v_g(2g-1), \, v_g(2g)
\]
are rational functions of $\underline{c}=(c_0,\cdots,c_g)$ and $q^\omega$ over $\Q$. We define
\begin{equation}\label{def_m3}
\aligned
m_{2g-n}(\underline{c}\,;q^\omega):
& = \frac{v_g(n-1)[1]+v_g(n-1)[2g-n+2]}{v_g(n-1)[2g-n+3]-v_g(n-1)[4g-2n+4]} 
\endaligned
\end{equation}
for $1 \leq n \leq 2g$ and  
\begin{equation} \label{def_m3_2}
m_{2g}(\underline{c}\,;q^\omega):=\frac{q^{g\omega}+q^{-g\omega}}{q^{g\omega}-q^{-g\omega}} 
\end{equation}
as well as \eqref{def_m2} and \eqref{def_m2_2}. 
Further, we define $R_n(\underline{c}\,;q^\omega)$ for $0 \leq n \leq 2g$ by 
\[
R_0(\underline{c}\,;q^\omega):=1
\]
and 
\begin{equation} \label{def_R2}
R_n(\underline{c}\,;q^\omega) := 
\begin{cases}
\displaystyle{\prod_{j=0}^{J} \frac{m_{2g-(2j+1)}(\underline{c}\,;q^\omega)}{m_{2g-2j}(\underline{c}\,;q^\omega)}} & \text{if $n=2J+1 \geq 1$}, \\
\displaystyle{\prod_{j=0}^{J} \frac{m_{2g-(2j+2)}(\underline{c}\,;q^\omega)}{m_{2g-(2j+1)}(\underline{c}\,;q^\omega)}} & \text{if $n=2J+2 \geq 2$}.
\end{cases}
\end{equation}
Then, we obtain the following second results.
\begin{theorem} \label{thm_3}
Let $q>1$. Let $P_g(x)$ be a self-reciprocal polynomial \eqref{def_Pg} of degree $2g>0$ 
with real coefficients $\underline{c}=(c_0,\cdots,c_g)$. 
Define $2g$ numbers $m_{2g-n}(\underline{c}\,;q^{\omega})$ by substituting $\underline{c}$ into \eqref{def_m3}.
Then, all zeros of $P_g(x)$ lie on $T$ (allowing multiple zeros) 
if and only if 
\[
m_{2g-n}(\underline{c}\,;q^{\omega}) > 0 \quad \text{and} \quad m_{2g-n}(\underline{c}\,;q^{\omega})^{-1}>0
\]
for every $1 \leq n \leq 2g$ and $\omega>0$. 
\end{theorem}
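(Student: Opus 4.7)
The plan is to follow the same architecture as the proof of Theorem \ref{thm_1} (to be carried out in Sections \ref{section_5}--\ref{section_6}), but performed in the $q^\omega$-deformed setting. The central object is the canonical system of Theorem \ref{thm_7} attached to $P_g$ in Section \ref{section_4_2}: the $2g$ quantities $m_{2g-n}(\underline{c}\,;q^\omega)$ are the discrete Hamiltonian parameters of this system, and the content of the theorem is the equivalence between ``all zeros of $P_g$ lie on $T$'' and ``the Hamiltonian is positive semidefinite for every $\omega>0$.'' The left-hand condition is translated into a Hermite--Biehler-type statement via the standard Cayley correspondence between self-reciprocal polynomials of degree $2g$ and polynomials without zeros in the open upper half-plane; the right-hand condition is then extracted from the de Branges theory reviewed in Section \ref{section_3}.

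First I would reinterpret the initial vector \eqref{def_v0_2}: its two halves ${\mathbf a}_{g,\omega}(0)$ and ${\mathbf b}_{g,\omega}(0)$ are, up to normalization, the coefficient vectors of the two polynomials arising from the Cayley decomposition of $P_g$ after the finite rescaling $x\mapsto q^\omega x$. This is the discrete analog of the infinitesimal ($\log q$) decomposition encoded in \eqref{def_v0}. Next I would verify, by a direct block computation based on the explicit description of $P_k(m_k)^{-1}Q_k$ obtained in the proof of Lemma \ref{lem_203}, that one step of the recursion \eqref{def_m1}--\eqref{def_v1} implements a single Schur--Cohn-type reduction of the current pair of polynomials, with $m_{2g-n}(\underline{c}\,;q^\omega)$ as the Schur parameter produced at that step. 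This parallels the structural lemmas in Section \ref{section_5}.

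For the necessity direction I would factor $P_g(x)=c_0\prod_{j=1}^{g}(x-e^{i\theta_j})(x-e^{-i\theta_j})$ with repeated $\theta_j$ allowed, and compute that each $m_{2g-n}(\underline{c}\,;q^\omega)$ is a ratio of two manifestly positive sums built from the factors $\abs{q^\omega-e^{\pm i\theta_j}}^2$. Since $q^\omega\neq e^{\pm i\theta_j}$ for $\omega>0$, these sums are nonzero, yielding the strict positivity of both $m_{2g-n}(\underline{c}\,;q^\omega)$ and its reciprocal for every $\omega>0$. For the sufficiency direction I would argue contrapositively: if some zero of $P_g$ lies off $T$, the classical Schur--Cohn criterion forces at least one Schur parameter to leave the admissible range, and tracing the $q^\omega$-dependence produces an explicit $\omega_0>0$ at which a sign violation occurs.

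The hardest part will be the uniform treatment of multiple zeros on $T$. In Theorem \ref{thm_1} the $\log q$-version required simple zeros because the infinitesimal deformation degenerates at confluent zeros; the finite deformation $\omega>0$ regularizes this, but one has to verify that regularization persists for \emph{every} $\omega>0$ and that the limit $q^\omega\to 1^+$, which recovers the $\log q$-version via Theorem \ref{thm_5}, is compatible with the multiplicity stratification. The cleanest route is to dispose of the simple-zero case first (where Theorem \ref{thm_1} already supplies the positivity after matching the analog of \eqref{rel_mR} for $q^\omega$), and then extend to multiple zeros on $T$ by a continuity-and-density argument, exploiting that $v_g(n)[k]$ is a rational function of $(\underline{c},q^\omega)$ and that the set of self-reciprocal polynomials with all zeros on $T$ is closed.
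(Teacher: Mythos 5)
Your high-level frame (canonical system of Theorem \ref{thm_7}, Hermite--Biehler correspondence) matches the paper, but both directions of your argument have genuine gaps. For the direction ``all zeros on $T$ $\Rightarrow$ positivity,'' you claim to \emph{compute} each $m_{2g-n}(\underline{c}\,;q^\omega)$ as a ratio of manifestly positive sums built from $\abs{q^\omega-e^{\pm i\theta_j}}^2$. No such closed formula is derived anywhere in the paper; indeed Section \ref{section_8} states explicitly that a general closed formula for these quantities was not obtained here (only in later work), so this step is an unproven and highly nontrivial assertion, as is the identification of the recursion \eqref{def_m1}--\eqref{def_v1} with a Schur--Cohn reduction. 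Your fallback --- settle the simple-zero case via Theorem \ref{thm_1} and then pass to multiple zeros by continuity and density --- fails on both halves: Theorem \ref{thm_1} controls $m_{2g-n}(\underline{c}\,;\log q)$, not $m_{2g-n}(\underline{c}\,;q^\omega)$, and Theorem \ref{thm_5} connects them only in the limit $q^\omega\to 1^+$, so even the simple-zero case is not supplied; and a limit of strictly positive rational functions need only be $\geq 0$ (or undefined), whereas the theorem demands $m_{2g-n}>0$ \emph{and} $m_{2g-n}^{-1}>0$, i.e.\ finiteness and nonvanishing. The Remark after Theorem \ref{thm_1} exhibits exactly this degeneration ($m_2=m_1=0$ for a specific $P_2$), so the density argument cannot close the multiple-zero case.

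The idea you are missing is the paper's Lemma \ref{lem_2_1}: $E_{q,\omega}(z):=A_q(z+i\omega)$ is of class HB for \emph{every} $\omega>0$ if and only if $A_q$ has only real zeros, multiplicities allowed. The proof is a one-line Blaschke-type estimate
\[
\Bigl|\tfrac{E_{q,\omega}^\sharp(z)}{E_{q,\omega}(z)}\Bigr|^2=\prod_\rho\Bigl(1-\tfrac{4\omega y}{(x-\rho)^2+(y+\omega)^2}\Bigr)<1
\]
from the factorization \eqref{H-factorization}, plus Hurwitz's theorem for the converse; the $i\omega$-shift regularizes confluent zeros uniformly in $\omega$, which is precisely what lets one rerun the entire Theorem \ref{thm_1} machinery (the reproducing-kernel identity of Proposition \ref{lem_602} for sufficiency, the zero-locus and mean-type arguments of Section \ref{section_6_2} for necessity) with $E_{q,\omega}$ in place of $E_q$ and Theorem \ref{thm_7} in place of Theorem \ref{thm_6}. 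Unless you can actually prove your explicit positive formula for $m_{2g-n}(\underline{c}\,;q^\omega)$ --- which would be a genuinely different and stronger result --- you need this lemma or a substitute for it.
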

\begin{theorem} \label{thm_4}
Let $q>1$. Let $P_g(x)$ be a self-reciprocal polynomial \eqref{def_Pg} of degree $2g>0$ 
with real coefficients $\underline{c}=(c_0,\cdots,c_g)$. 
Define $2g$ numbers $R_n(\underline{c}\,;q^{\omega})$ by substituting $\underline{c}$ into \eqref{def_R2}.
Then, all zeros of $P_g(x)$ lie on $T$ (allowing multiple zeros) 
if and only if 
\[
R_{n}(\underline{c}\,;q^{\omega}) >0  \quad \text{and} \quad R_{n}(\underline{c}\,;q^{\omega})^{-1}>0
\]
for every $1 \leq n \leq 2g$ and $\omega>0$. 
\end{theorem}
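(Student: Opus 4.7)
The plan is to deduce Theorem~\ref{thm_4} from Theorem~\ref{thm_3} by the same sign-propagation argument that the paper uses to pass from Theorem~\ref{thm_1} to Theorem~\ref{thm_2}. The first step is to verify, directly from the product formula \eqref{def_R2}, the telescoping identity
\[
m_{2g-n}(\underline{c}\,;q^\omega)
= m_{2g}(\underline{c}\,;q^\omega)\, R_{n-1}(\underline{c}\,;q^\omega)\, R_n(\underline{c}\,;q^\omega)
\]
for every $1\leq n \leq 2g$, with the convention $R_0(\underline{c}\,;q^\omega)=1$. This is the exact analog of \eqref{rel_mR} and follows by a short induction on $n$ unpacking the product defining $R_n(\underline{c}\,;q^\omega)$.

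The second ingredient is the elementary observation that, for $q>1$ and $\omega>0$, the quantity $m_{2g}(\underline{c}\,;q^\omega)$ defined by \eqref{def_m3_2} is a finite strictly positive real number whose reciprocal is likewise finite and strictly positive. Combining these two ingredients reduces the biconditional of Theorem~\ref{thm_4} to a sign-chase. In the ``if'' direction, the finite positivity of each $R_n(\underline{c}\,;q^\omega)$ and its reciprocal transfers, through the telescoping identity and the finite positivity of $m_{2g}(\underline{c}\,;q^\omega)$, to finite positivity of each $m_{2g-n}(\underline{c}\,;q^\omega)$ and its reciprocal, so Theorem~\ref{thm_3} yields the zero condition on $P_g(x)$. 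In the ``only if'' direction, Theorem~\ref{thm_3} first provides finite positivity of each $m_{2g-n}(\underline{c}\,;q^\omega)$ and its reciprocal; then, starting from $R_0=1>0$ and solving the identity for $R_n$, an induction on $n$ propagates finite positivity to every $R_n(\underline{c}\,;q^\omega)$ for $1\leq n\leq 2g$ and $\omega>0$.

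The main (and essentially only) obstacle in this deduction is Theorem~\ref{thm_3} itself, whose proof is developed in Section~\ref{section_7} in parallel with the proof of Theorem~\ref{thm_1}. Once Theorem~\ref{thm_3} is granted, Theorem~\ref{thm_4} is a purely algebraic reformulation of it, and the argument sketched above is essentially the same paragraph-long sign analysis that converts Theorem~\ref{thm_1} into Theorem~\ref{thm_2} in Section~\ref{section_2_2}.
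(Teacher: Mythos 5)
Your proposal is correct and follows exactly the route the paper takes: the paper explicitly omits the proof of Theorem \ref{thm_4}, stating it is ``just a rewriting of Theorem \ref{thm_3} by \eqref{def_m3_2} and \eqref{def_R2},'' which is precisely your telescoping identity $m_{2g-n}(\underline{c}\,;q^\omega)=m_{2g}(\underline{c}\,;q^\omega)R_{n-1}(\underline{c}\,;q^\omega)R_n(\underline{c}\,;q^\omega)$ combined with the positivity of $m_{2g}(\underline{c}\,;q^\omega)$ and the sign-chase starting from $R_0=1$, mirroring the passage from Theorem \ref{thm_1} to Theorem \ref{thm_2} in Section \ref{section_2_2}. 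No gaps beyond the (correctly identified) reliance on Theorem \ref{thm_3}.
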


The rational functions of \eqref{def_R} and \eqref{def_R2} 
have the following simple relation. 

\begin{theorem} \label{thm_5}
Let $R_{n}(\underline{c}\,;q^\omega)$ and $R_{n}(\underline{c}\,;q^\omega)$ 
be in \eqref{def_R} and \eqref{def_R2}, respectively. Then 
\[
\lim_{q^\omega \to 1^+} R_n(\underline{c}\,;q^\omega)
= R_{n}(\underline{c})
\]
as a rational function of $\underline{c}=(c_0,\cdots,c_g)$ over $\Q$.  
Suppose that all zeros of a self-reciprocal polynomial \eqref{def_Pg} with real coefficients $\underline{c}=(c_0,\cdots,c_g)$ 
lie on $T$ and simple. Then we have 
\[
R_n(\underline{c}\,;q^\omega)
= R_{n}(\underline{c})+O(\log q^\omega) \quad \text{as} \quad q^\omega \to 1^+, 
\]
and 
\[
\aligned
m_{2g-n}(\underline{c}\,;q^\omega) 
&=\frac{1}{g \log q^\omega}\Bigl( R_{n-1}(\underline{c})R_n(\underline{c}) + O(\log q^\omega) \Bigr)
+O(\log q^\omega) \quad \text{as} \quad q^\omega \to 1^+,\\ 
& =
\frac{1}{\omega}\Bigl( m_{2g-n}(\underline{c}\,;\log q)+ O(\omega/g) \Bigr)
+O(\omega \log q) \quad \text{as} \quad \omega \to 0^+,
\endaligned
\]
where implied constants depend only on  $\underline{c}$. 
\end{theorem}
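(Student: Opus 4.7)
The plan is to prove, by induction on $n$, the central asymptotic relation
\[
\omega\, m_{2g-n}(\underline{c}\,;q^\omega) = m_{2g-n}(\underline{c}\,;\log q) + O(\omega) \quad \text{as}\ \omega \to 0^+.
\]
All three assertions then follow immediately: substituting into \eqref{def_R2} and noting that each $R_n$ has equal numbers of $m$-factors in numerator and denominator, the singular $\omega^{-1}$ factors cancel and yield $R_n(\underline{c}\,;q^\omega) = R_n(\underline{c}) + O(\omega) = R_n(\underline{c}) + O(\log q^\omega)$; the refined statement for $m_{2g-n}(\underline{c}\,;q^\omega)$ under the simple-zero hypothesis comes from combining this central asymptotic with \eqref{rel_mR}, using Theorem \ref{thm_2} to ensure $R_{n-1}(\underline{c})R_n(\underline{c})$ is finite and positive.

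To run the induction, denote by $\widetilde{v}_g(n)$ the vectors produced from the initial datum \eqref{def_v0}, while reserving $v_g(n)$ for those produced from \eqref{def_v0_2} as in the statement. Write $v = \log q^\omega$ and $u = \log q$. The Taylor expansion $q^{k\omega} = 1 + k\omega u + O(\omega^2)$ yields
\[
{\mathbf a}_{g,\omega}(0) = {\mathbf a}_g(0) + \omega\,{\mathbf b}_g(0) + O(\omega^2),
\]
relating the two initial vectors explicitly. The inductive claim to propagate, for $n \geq 1$, is that
\[
v_g(n)[k] = \widetilde{v}_g(n)[k] + O(v) \quad \text{for}\ 1\leq k\leq 2g-n+1,
\]
\[
v_g(n)[k] = \omega\,\widetilde{v}_g(n)[k] + O(v^2) \quad \text{for}\ 2g-n+2\leq k\leq 4g-2n+2.
\]

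The base case $n=1$ is handled directly. The explicit value $m_{2g-1}(\underline{c}\,;q^\omega) = \coth(gv) = (gv)^{-1} + gv/3 + O(v^3)$ gives the central asymptotic at $n=1$. Applying the block form of $P_{2g-1}(m)^{-1}Q_{2g-1}$ derived in the proof of Lemma \ref{lem_203} to the Taylor expansion of $v_g(0)$, and using the symmetric/antisymmetric structure of ${\mathbf a}_g(0)$ and ${\mathbf b}_g(0)$ to kill the otherwise singular leading terms, verifies the claim for $v_g(1)$. For the inductive step $n-1 \to n$ with $n\geq 2$, the hypothesis at $n-1$ inserted into \eqref{def_m3} yields $m_{2g-n}(\underline{c}\,;q^\omega) = \omega^{-1}m_{2g-n}(\underline{c}\,;\log q) + O(1)$ after writing numerator and denominator in terms of $\widetilde{v}_g(n-1)$. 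Substituting into the block decomposition of $P_{2g-n}(m)^{-1}Q_{2g-n}$, the factor $m_{2g-n}\propto \omega^{-1}$ in $M_{2g-n,2}$ cancels against the factor $\omega$ in the lower half of $v_g(n-1)$ to reproduce the defining recursion of $\widetilde{v}_g(n)$ on the upper half, while $M_{2g-n,3}\propto m_{2g-n}^{-1}\propto\omega$ generates the required factor of $\omega$ on the lower half.

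The principal technical obstacle will be the bookkeeping of error terms during these substitutions. In the upper half one encounters schematically $(\omega^{-1}m_{2g-n}(\underline{c}\,;\log q) + O(1))\cdot(\omega\,\widetilde{v}_g(n-1)_L + O(v^2))$, and both cross-terms $\omega^{-1}\cdot O(v^2) = u\cdot O(v) = O(v)$ and $O(1)\cdot O(\omega) = O(v)$ must be collapsed into the claimed $O(v)$ error, all while respecting the matching recursion satisfied by $\widetilde{v}_g(n)$. A weaker $O(v)$ estimate on the lower half (rather than $O(v^2)$) would propagate to $O(1)$ on the upper half at the next step and destroy the induction, so preserving the sharper $O(v^2)$ bound on the lower half at every stage is the crux of the argument.
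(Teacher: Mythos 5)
Your proposal is correct, but it organizes the argument quite differently from the paper's proof in Section \ref{section_7_3}. The paper turns your inductive claim into an \emph{exact} algebraic identity rather than an asymptotic one: it introduces the auxiliary initial vector $\tilde v_g(0)={}^t({\mathbf a}_{g,\omega}(0),\ \omega^{-1}{\mathbf a}_{g,\omega}(0))$ and uses the homogeneity of the recursion \eqref{def_m1}--\eqref{def_v1} (rescaling the lower half of the initial vector by $\omega^{-1}$ leaves all upper halves unchanged and rescales all lower halves by $\omega^{-1}$) to get $\tilde m_{2g-n}=\omega\, m_{2g-n}(\underline{c}\,;q^\omega)$ and $\tilde R_n=R_n(\underline{c}\,;q^\omega)$ identically. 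All the analysis is then concentrated in one explicit computation, the limit of $\tilde v_g(1)$ as $q^\omega\to 1^+$ via $\cosh$, $\sinh$, $\coth$ expansions; after the first step the recursion is a fixed rational map, so the limits of all later quantities follow, and the $O(\log q^\omega)$ error terms come from the rationality of $R_n(\underline{c}\,;q^\omega)$ in $q^\omega$. Your route instead drags the $\omega^{-1}$ singularity through every step, with the two-tier error bounds doing the work that the exact scaling identity does in the paper. I checked that your bookkeeping closes: the dangerous term $M_{2g-n,2}\cdot(\text{lower-half error})$ is $\omega^{-1}O(1)\cdot O(\omega v)=O(v)$, and the lower-half error regenerates as $O(\omega v)+O(v^2)$, which for fixed $q>1$ is again $O(\omega v)=O(v^2/\log q)$, so the sharper bound survives (stating it as $O(\omega v)$ rather than $O(v^2)$ is slightly cleaner and avoids a constant $1/\log q$); the nonvanishing of the denominators needed at each division is supplied by Theorem \ref{thm_1} under the simple-zero hypothesis and by Lemma \ref{lem_202} for the generic rational-function statement (agreement on a Zariski-dense set of $\underline{c}$ then gives the identity of rational functions, a point worth one sentence). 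The trade-off: the paper's rescaling trick eliminates all error propagation for the limit statement at the cost of introducing the auxiliary system, while your induction is heavier on bookkeeping but delivers the quantitative $O(\log q^\omega)$ bounds directly, without the separate appeal to rationality in $q^\omega$.
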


Theorem \ref{thm_3}, \ref{thm_4}, and \ref{thm_5} 
are proved in Section \ref{section_7} 
together with Theorem \ref{thm_7} and Corollary \ref{cor_2} below. 

%
%
\section{Canonical systems} \label{section_3}
%
%
The positivity of values $m_{2g-n}(\underline{c}\,;\log q)$, $m_{2g-n}(\underline{c}\,;q^\omega)$,  
$R_n (\underline{c})$, $R_n (\underline{c}\,;q^\omega)$ 
for a numerical vector $\underline{c}$ attached to a self-reciprocal polynomial of degree $2g$ 
having only (simple) zeros on $T$ 
can be understood from the viewpoint of canonical systems
of linear differential equations (in the sense of de Branges).
In fact, ideas of constructions of $m_{2g-n}(\underline{c}\,;\log q)$ and $m_{2g-n}(\underline{c}\,;q^\omega)$ 
in Section \ref{section_2}
are coming from the theory of canonical systems. 

In this section, we review the theory of canonical systems of linear differential equations 
and the theory of entire functions of the Hermite--Biehler class  
according to de Branges \cite{deBranges68}, Dym~\cite{Dym70}, Levin~\cite{Levin80}, Remling \cite{Remling02}, 
and Lagarias \cite{Lagarias06, Lagarias09},   
in order to understand Theorem \ref{thm_1} and \ref{thm_3} 
in terms of these theories. 
We often use the notation 
$F^\sharp(z)=\overline{F(\bar{z})}$ 
for an entire function $F(z)$. 
An entire function $F(z)$ is called real or a real entire function if $F(\R) \subset \R$, 
or equivalently, $F^\sharp(z)=F(z)$.
We denote by $\lim_{x \to x_0^+}$ and $\lim_{x \to x_0^-}$  
the right-sided limit and the left-sided limit at $x=x_0$, respectively. 

\begin{definition} \label{def_301}
Let $H(a)$ be a $2\times 2$ matrix-valued function 
defined almost everywhere on a finite interval $I=[a_1,a_0)$ $(-\infty<a_1<a_0 < \infty)$. 
A family of linear differential equations on $I$ 
of the form 
\begin{equation} \label{can_0}
-a\frac{\partial }{\partial a}
\begin{bmatrix}
A(a,z) \\ B(a,z)
\end{bmatrix}
= z 
\begin{bmatrix}
0 & -1 \\ 1 & 0
\end{bmatrix}
H(a)
\begin{bmatrix}
A(a,z) \\ B(a,z)
\end{bmatrix}, 
\quad 
\lim_{a \to a_0^-}
\begin{bmatrix}
A(a,z) \\ B(a,z)
\end{bmatrix}
= 
\begin{bmatrix}
1 \\ 0
\end{bmatrix}
\end{equation}
parametrized by $z \in \C$ 
is called a (two-dimensional) canonical system 
if 
\begin{enumerate}
\item[(H1)] $H(a)$ is a positive semidefinite symmetric matrix for almost every $a \in I$, 
\item[(H2)] $H(a)\not\equiv 0$ on any open subset of $I$ with positive Lebesgue measure,  
\item[(H3)] $H(a)$ is locally integrable on $I$.  
\end{enumerate}
For a canonical system, the matrix-valued function $H(a)$ is called its Hamiltonian. 
\end{definition}

Usually, canonical systems are defined by using the additive derivative 
$\partial/\partial t$ for $t=\log(a_0/a)$. 
However, we use the multiplicative derivative $-a(\partial/\partial a)$ as above 
for the convenience of descriptions of results in the paper. 

Canonical systems are closely related to 
entire functions of the Hermite--Biehler class. 

\begin{definition}[Hermite--Biehler class]
An entire function $E(z)$ is said to be a function of class {\rm HB} 
if it satisfies the condition  
\begin{equation} \label{HB}
|E^\sharp(z)| < |E(z)| \quad \text{for every} \quad {\rm Im}\,z > 0
\end{equation}
and has no real zeros. 
On the other hand, 
an entire function $E(z)$ is said to be a function of class $\overline{\rm HB}$
if it satisfies the condition  
\begin{equation} \label{wHB}
|E^\sharp(z)| \leq |E(z)| \quad \text{for every} \quad {\rm Im}\,z > 0.
\end{equation}
and has no zeros in the upper half-plane ${\rm Im}\, z>0$.
\end{definition}
\begin{remark}
This definition of class HB is equivalent to the definition of Levin~\cite[\S1 of Chap. VII]{Levin80} 
if we replace the word ``the upper half-plane''  by ``the lower half-plane'', 
because \eqref{HB} implies that $E(z)$ has no zeros in the upper half-plane ${\rm Im}\, z>0$. 
We adopted this definition for the convenience of using of 
the theory of canonical systems via the theory of de Branges spaces.
\end{remark}
The following result for a function of class {\rm HB} 
is used often in the later sections. 
\begin{proposition} \label{lem_303}
Let $E(z)$ be an entire function of finite order. 
Put 
 \[
A(z):=\frac{1}{2}(E(z)+E^\sharp(z)) , \quad B(z):=\frac{i}{2}(E(z)-E^\sharp(z)). 
\]
Then $E(z)$ is a function of class {\rm HB} 
if and only if
$E(z)$ has no zeros in the upper half-plane ${\rm Im}\, z>0$, 
(real) entire functions $A(z)$ and $B(z)$ have only simple real zero, 
and zeros of $A(z)$ and $B(z)$ interlace.
 On the other hand, 
the function $E(z)$ is a function of class $\overline{\rm HB}$ 
if and only if $E(z)$ is a product of a real entire function $E_0(z)$ 
having only real zeros and a function $E_1(z)$ of class {\rm HB}. 
\end{proposition}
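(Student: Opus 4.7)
The plan is to prove both equivalences by analyzing the meromorphic ratio $\Theta(z) := E^\sharp(z)/E(z)$ via the Cayley transform, reducing the HB inequality to a Herglotz / Pick--Nevanlinna function that encodes the interlacing of zeros of $A$ and $B$. The key identity is
\[
\Phi(z) := i\,\frac{1+\Theta(z)}{1-\Theta(z)} = -\frac{A(z)}{B(z)},
\]
which follows from $A=(E+E^\sharp)/2$ and $B=i(E-E^\sharp)/2$. Since the Cayley transform $w\mapsto i(1+w)/(1-w)$ maps the open unit disk onto the open upper half-plane, the inequality $|\Theta(z)|<1$ for $\mathrm{Im}\,z>0$ is equivalent to $\Phi$ being Herglotz (mapping the open upper half-plane into itself).

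For the forward HB direction: any zero $z_0$ of $E$ with $\mathrm{Im}\,z_0>0$ would force $|E^\sharp(z_0)|<0$, so $E$ has no zeros in the upper half-plane. Further, $\Phi=-A/B$ is meromorphic and Herglotz, and any meromorphic Herglotz function on $\C$ has strictly interlacing simple real zeros and poles. A common real zero of $A$ and $B$ would be a real zero of $E$, contradicting the HB definition, so the zeros of $A$ and those of $B$ are disjoint, simple, real, and interlace as claimed. For the converse, starting from the real simplicity and interlacing of zeros of $A$ and $B$, I would invoke the Hadamard / Mittag--Leffler expansion of $\Phi=-A/B$ (valid because $A,B$ are of finite order); the strict interlacing plus a single sign computation (fixed by the no-zeros-in-upper-half-plane hypothesis, checked at one convenient point) forces all residues of $\Phi$ at its poles to be positive. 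This exhibits
\[
\Phi(z) = \alpha z + \beta + \sum_n c_n\bigl(\tfrac{1}{x_n-z} - \tfrac{1}{x_n}\bigr), \qquad c_n>0,\ \alpha\geq 0,\ \beta\in\R,
\]
as Herglotz; the inverse Cayley transform yields $|\Theta|\leq 1$ on the upper half-plane, and the open mapping / maximum principle upgrades this to the strict $|\Theta|<1$ required for $E\in\mathrm{HB}$, using the nonconstancy of $\Theta$.

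For the $\overline{\mathrm{HB}}$ characterization, the $(\Leftarrow)$ direction is immediate: since $E_0^\sharp=E_0$, one has $|E^\sharp/E|=|E_1^\sharp/E_1|<1$ on $\{\mathrm{Im}\,z>0\}$, and $E$ has no zeros in the upper half-plane. For $(\Rightarrow)$, I would build $E_0$ as the canonical Hadamard product over the real zeros of $E$ (finite order fixes the right genus and convergence, and makes $E_0$ real entire). The quotient $E_1=E/E_0$ is then entire, with no real zeros and no zeros in the upper half-plane, so $\Theta_1 := E_1^\sharp/E_1$ is holomorphic on a neighborhood of the closed upper half-plane, satisfies $|\Theta_1|=1$ on $\R$, and $|\Theta_1|\leq 1$ on the open upper half-plane by hypothesis. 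The maximum modulus principle then forces $|\Theta_1|<1$ strictly (whence $E_1\in\mathrm{HB}$) unless $\Theta_1$ is a unimodular constant; in that degenerate case, Hadamard factorization shows $E_1$ is a zero-free real entire function of the form $e^{P(z)}$ with $P$ a real polynomial, which can be absorbed into $E_0$.

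The main obstacle is the careful use of the finite-order hypothesis: it underwrites both the Hadamard / Mittag--Leffler form of $\Phi=-A/B$ (including the positivity of residues and the correct number of convergence factors) and the Phragm\'en--Lindel\"of-type argument that upgrades $|\Theta|\leq 1$ to strict inequality in the HB direction. The remaining subtlety is handling the unimodular-constant degeneracy in the $\overline{\mathrm{HB}}$ case, which must be folded into the $E_0$ factor without disturbing the stated form of the decomposition.
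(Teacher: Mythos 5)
Your route --- passing to $\Theta=E^\sharp/E$ and its Cayley transform $\Phi=i(1+\Theta)/(1-\Theta)=-A/B$, and characterizing class HB through the Herglotz property of $\Phi$ --- is the classical one, and it is necessarily different from the paper's, which gives no argument and simply cites Levin \cite{Levin80}. Your forward direction and the identity $\Phi=-A/B$ are fine. The gap is in the converse, at the step where you claim that positivity of the residues of $\Phi$ is ``fixed by the no-zeros-in-upper-half-plane hypothesis, checked at one convenient point.'' That hypothesis does not determine the sign. Writing $E=-B(\Phi+i)$, an \emph{anti}-Herglotz $\Phi$ still produces an $E$ with no zeros in the upper half-plane whenever the Herglotz function $-\Phi$ omits the value $i$, and Herglotz functions can omit $i$: $\tan z$ does. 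Concretely, take $E(z)=\sin z-i\cos z=-ie^{iz}$. This is entire of order one with no zeros at all, $A(z)=\sin z$ and $B(z)=\cos z$ have only simple real zeros and these interlace, yet $\lvert E^\sharp(z)/E(z)\rvert=e^{2\,{\rm Im}\,z}>1$ on the upper half-plane, so $E$ is not of class HB. Hence no argument can close this gap: the right-hand side of the first equivalence, as literally stated, is strictly weaker than membership in HB. What actually pins down the Herglotz sign is the Wronskian condition in Levin's Theorem 3, namely that $A'(x)B(x)-A(x)B'(x)<0$ at one (equivalently, every) real point; with that condition in hand, your residue computation does go through and the rest of your argument (the Mittag--Leffler representation using finite order, the inverse Cayley transform, and the upgrade to strict inequality via the open mapping theorem) is sound.

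A smaller issue of the same kind occurs in the degenerate case of your $\overline{\rm HB}$ factorization. When $\Theta_1=E_1^\sharp/E_1$ is a unimodular constant you obtain $E_1=e^{i\theta}e^{u(z)}$ with $u$ a real polynomial; after absorbing $e^{u}$ into $E_0$ you are left with $E=e^{i\theta}E_0$, and the residual constant $e^{i\theta}$ cannot be pushed into an HB factor, since any factorization $E=E_0E_1$ with $E_0$ real entire forces $\lvert E_1^\sharp\rvert=\lvert E_1\rvert$ on the upper half-plane, violating the strict inequality \eqref{HB}. The function $E(z)=z$ already exhibits this: it lies in $\overline{\rm HB}$ but admits no factorization of the stated form. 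So the second equivalence also requires the degenerate alternative ($E$ a unimodular constant times a real entire function with only real zeros) to be listed explicitly, as it is in Levin; your plan to ``fold it into $E_0$'' cannot be carried out.
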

\begin{proof}
See Levin~\cite[Chap. VII, Theorem 3, Theorem 5, the latter half of p.313]{Levin80}.
\end{proof}

There are two important results of de Branges 
that relate a canonical system with an entire function of class HB. 
Roughly, if $(A(a,z),B(a,z))$ is a solution of a canonical system, 
then $E(a,z):=A(a,z)-iB(a,z)$ is a function of class HB for every $[a_1,a_0)$ 
(see \cite[Theorem 41]{deBranges68}, and also \cite[Section 2]{Dym70} for details). 
Conversely, if $E(z)$ is a function of class HB normalized as $E(0)=1$, 
then there exists a canonical system on some interval $[a_1,a_0)$ 
such that $E(z)=A(a_1,z)-iB(a_1,z)$ for the solution $(A(a,z),B(a,z))$ of the canonical system 
(see \cite[Theorem 40]{deBranges68}, and also \cite[Theorem 7.3]{Remling02}, \cite[pp.70--71]{Lagarias06} for more details). 
Therefore, a function of class HB and a Hamiltonian of a canonical system 
correspond each other. 
However, in general, an explicit construction of a Hamiltonian is quite difficult 
when we start from a function of class HB, because it is a kind of inverse scattering problem.  

%
%
\section{Results II: \\ Differential equations attached to self-reciprocal polynomials} \label{section_4} 
%
%
In this section, 
we construct two kinds of systems of ordinary linear differential equations attached to a self-reciprocal polynomial 
so that the first one (resp. the second one) 
is a (two dimensional) canonical system 
if and only if all zeros of the polynomial lie on $T$ and simple (resp. lie on $T$).

%
%
\subsection{Systems of the first kind} \label{section_4_1}
%
%
Fix a real number $q>1$ arbitrary. 
For a self-reciprocal polynomial $P_{g}(x)$ of \eqref{def_Pg} with real coefficients $\underline{c}=(c_0,\cdots,c_g)$, we define
\begin{equation} \label{def_A}
A_q(z) := q^{-giz}P_g(q^{iz}) = \sum_{k=0}^{g-1} c_k\Bigl(q^{(g-k)iz} + q^{-(g-k)iz}\Bigr) + c_g.
\end{equation}
By the definition, all zeros of $P_g(x)$ lie on $T$ if and only if $A_q(z)$ has only real zeros. 
The self-reciprocal condition $P_g(x)=x^{2g}P_g(1/x)$ implies the functional equation 
$ A_q(z)=A_q(-z) $
and the realness of coefficients of $P_g(x)$ implies
$ A_q^\sharp(z)=A_q(z). $
Hence $A_q(z)$ is an even real entire function of exponential type.  
Further, we define 
\begin{equation} \label{def_B}
B_q(z):=-\frac{d}{dz}A_q(z)
\end{equation}
and 
\begin{equation} \label{def_E}
E_q(z) := A_q(z) - i B_q(z).
\end{equation}
Then $B_q(z)$ is a real entire function and  
\begin{equation}
B_q(z) = -i(\log q)\sum_{k=0}^{g-1} (g-k) c_k\Bigl(q^{(g-k)iz} - q^{-(g-k)iz}\Bigr).
\end{equation}
Moreover, we have
\begin{equation} \label{def_E_2}
E_q^\sharp(z) =  A_q(z) + i B_q(z)
\end{equation}
and 
\[
A_q(z) = \frac{1}{2}(E_q(z)+E_q^\sharp(z)), \quad B_q(z)=\frac{i}{2}(E_q(z)-E_q^\sharp(z)).
\]

\begin{lemma} \label{lem_401}
Let $E_q(z)$, $A_q(z)$, $B_q(z)$ be as above. Then 
\begin{enumerate}
\item $E_q(z)$ satisfies condition \eqref{HB}
if and only if $A_q(z)$ has only real zeros. 
\item $E_q(z)$ is a function of class {\rm HB}
if and only if $A_q(z)$ has only simple real zeros. 
\end{enumerate}
\end{lemma}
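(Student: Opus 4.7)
The strategy is to recast both parts of the lemma as statements about the logarithmic derivative $A_q'/A_q$ on the upper half-plane, and then control that derivative via the substitution $w=q^{iz}$, which identifies each zero of $A_q$ with a root $w_j$ of $P_g$: $w_j\in T$ exactly when its associated $z$-zeros are real, while $|w_j|<1$ exactly when they lie in $\{\mathrm{Im}\,z>0\}$.

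A direct expansion using $B_q=-A_q'$ gives the identity
\[
|E_q(z)|^2-|E_q^\sharp(z)|^2 \;=\; -4\,\mathrm{Im}\bigl(A_q(z)\overline{B_q(z)}\bigr) \;=\; -4\,|A_q(z)|^2\,\mathrm{Im}\!\left(\frac{A_q'(z)}{A_q(z)}\right),
\]
so, wherever $A_q(z)\neq 0$, the inequality \eqref{HB} on $\{\mathrm{Im}\,z>0\}$ is equivalent to $\mathrm{Im}(A_q'/A_q)<0$ there. The technical heart of the argument is then to differentiate the finite product $A_q(z)=c_0\,q^{-giz}\prod_{j=1}^{2g}(q^{iz}-w_j)$ to get
\[
\frac{A_q'(z)}{A_q(z)} \;=\; i\log q\,\sum_{j=1}^{2g}\frac{w+w_j}{2(w-w_j)}, \qquad w=q^{iz},
\]
and to rationalise each summand: $\mathrm{Re}\bigl((w+w_j)/(w-w_j)\bigr)=(|w|^2-|w_j|^2)/|w-w_j|^2$. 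When every $|w_j|=1$ (equivalently, all zeros of $A_q$ are real) and $|w|<1$ (equivalently, $\mathrm{Im}\,z>0$), every summand has strictly negative real part, so $\mathrm{Im}(A_q'/A_q)<0$ throughout the open upper half-plane and \eqref{HB} follows.

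For the converse of (1), if \eqref{HB} holds then $A_q$ cannot vanish in $\{\mathrm{Im}\,z>0\}$: at any such $z_0$ one would have $E_q(z_0)=-iB_q(z_0)$ and $E_q^\sharp(z_0)=iB_q(z_0)$, which have equal modulus and contradict strictness. Since self-reciprocity of $P_g$ pairs any root $w_j$ with $1/w_j$, any $|w_j|\neq 1$ would produce a zero of $A_q$ in $\{\mathrm{Im}\,z>0\}$, a contradiction; hence every $w_j$ lies on $T$ and all zeros of $A_q$ are real.

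For (2), the class HB requires \eqref{HB} \emph{and} the absence of real zeros of $E_q$. Since $A_q,B_q$ are real on $\mathbb{R}$, a real $x$ is a zero of $E_q$ iff $A_q(x)=B_q(x)=0$, which by $B_q=-A_q'$ is precisely the condition that $x$ be a multiple zero of $A_q$. Combined with (1), $E_q$ is of class HB iff every zero of $A_q$ is real and simple. The one point that warrants care is multiplicity bookkeeping under the covering $w=q^{iz}$ (each $w_j$ produces an arithmetic progression of $z$-zeros, with matching multiplicities), but the rational-function form of $A_q'/A_q$ above handles this automatically, so both the strict-sign argument in (1) and the multiple-zero characterisation in (2) go through without modification.
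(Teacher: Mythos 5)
Your proof is correct, and the forward direction of (1) follows a genuinely different decomposition from the paper's. Both arguments reduce the inequality \eqref{HB} to a sign condition on the logarithmic derivative: your identity $|E_q(z)|^2-|E_q^\sharp(z)|^2=-4|A_q(z)|^2\,\mathrm{Im}(A_q'(z)/A_q(z))$ is exactly the paper's comparison of $|1+iA_q'/A_q|$ with $|1-iA_q'/A_q|$ in disguise. The difference is in how the logarithmic derivative is controlled. The paper invokes the canonical product $A_q(z)=C\lim_{R\to\infty}\prod_{|\rho|\le R}(1-z/\rho)$ over the (infinitely many) real zeros of $A_q$, justified by the assertion that $A_q$ is real, even and of exponential type, and then sums $y/|z-\rho|^2>0$. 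You instead pull the finite factorization $P_g(x)=c_0\prod_{j=1}^{2g}(x-w_j)$ back through $w=q^{iz}$, obtaining $A_q'/A_q=i\log q\sum_j\tfrac{w+w_j}{2(w-w_j)}$, and use $\mathrm{Re}\bigl(\tfrac{w+w_j}{w-w_j}\bigr)=\tfrac{|w|^2-|w_j|^2}{|w-w_j|^2}<0$ when $|w_j|=1$ and $|w|<1$. Your route is more elementary and self-contained: it avoids the genus-zero Hadamard representation (a standard but nontrivial fact the paper leaves implicit) and handles the multiplicity bookkeeping automatically, at the cost of being tied to the polynomial structure; the paper's version would survive for more general even real entire functions of exponential type with only real zeros. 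The converse of (1) and all of (2) are essentially identical to the paper's: vanishing of $A_q$ in the open upper half-plane forces $|E_q|=|E_q^\sharp|$ there, self-reciprocity pairs $w_j$ with $1/w_j$ so a root off $T$ yields such a zero, and a real zero of $E_q$ is precisely a common real zero of $A_q$ and $B_q=-A_q'$, i.e.\ a multiple real zero of $A_q$.
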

By this lemma, if $A_q(z)$ has only real zeros, 
then $E_q(z)$ is a function of class $\overline{\rm HB}$ at least 
and $B_q(z)$ has only real zeros, 
but $A_q(z)$ and $B_q(z)$ may have a common (real) zero. 
\begin{proof} 
(1) Assume that $E_q(z)$ satisfies \eqref{HB}. 
Then it implies that $A_q(z) \not=0$ for ${\rm Im}\, z>0$,  
Further, $A_q(z) \not=0$ for ${\rm Im}\, z<0$ 
by the functional equation $A_q(z)=A_q(-z)$. 
Hence all zeros of $A_q(z)$ lie on the real line. 
Conversely, assume that all zeros of $A_q(z)$ are real. 
Then $A_q(z)$ has the factorization 
\begin{equation} \label{H-factorization}
A_q(z) = C \lim_{R \to \infty}\prod_{|\rho| \leq R}\left(1-\frac{z}{\rho} \right) \quad (C,\,\rho \in \R), 
\end{equation}
because $A_q(z)$ is real, even and of exponential type. 
Therefore, 
\[
{\rm Re}\left( i\frac{A_q^\prime(z)}{A_q(z)} \right) 
= {\rm Re}\left(  \sum_{\rho} \frac{i(x-\rho) + y}{|z-\rho|^2} \right) 
= \sum_{\rho} \frac{y}{|z-\rho|^2} \quad (z=x+iy).
\]
Hence, for ${\rm Im}\, z>0$, 
\[
|E_q(z)|=|A_q(z)|\left|1+i\frac{A_q^\prime(z)}{A_q(z)} \right|>|A_q(z)|\left|1-i\frac{A_q^\prime(z)}{A_q(z)} \right|=|E_q^\sharp(z)|.
\]
\noindent
(2) Suppose that $E_q(z)$ is a function of class HB, that is, 
$E_q(z)$ satisfies \eqref{HB} and has no real zeros. 
Then, $A_q(z)$ has only real zeros by (1). 
If $A_q(z)$ has a multiple real zero, 
then $A_q(z)$ and $B_q(z)=-A_q^\prime(z)$ have a common real zero.  
Thus $E_q(z)=A_q(z)-iB_q(z)$ has a real zero. 
It is a contradiction. 
Hence $A_q(z)$ has only simple real zeros. 
The converse assertion follows from (1) and definition \eqref{def_B}. 
\end{proof}

As mentioned in Section \ref{section_3}, 
functions of class HB and canonical systems correspond each other. 
Therefore, by Lemma \ref{lem_401}, 
there must exists a canonical system attached to $E_q(z)$ 
if and only if all zeros of $P_g(x)$ lie on $T$ and simple. 
Now we construct a system of linear differential equation 
so that it is a canonical system if and only if all zeros of $P_g(x)$ lie on $T$ and simple.  
\medskip

Let $\underline{c}=(c_0,\cdots,c_g)$ be $2g$ indeterminate elements 
and let $q>1$ be a real number.  By using $m_{2g-n}(\underline{c}\,;\log q)$ of \eqref{def_m2}, 
we define the $\Q(c_0,\cdots,c_g)$-valued function $m_{q}(a)$ of $a \in [1,q^{g})$ by
\begin{equation} \label{def_mq}
m_{q}(a)  =m_{2g-n}(\underline{c}\,;\log q) \quad \text{if} \quad q^{\frac{n-1}{2}} \leq a < q^{\frac{n}{2}}
\end{equation}
and the $2 \times 2$ matrix valued function $H_q(a)$ of $a \in [1,q^{g})$ by
\begin{equation} \label{def_Hq}
H_q(a) = 
\begin{bmatrix}
m_{q}(a)^{-1} & 0 \\ 0 & m_{q}(a)
\end{bmatrix}.
\end{equation}
%
In addition, we define functions $A_{q}(a,z)$ and $B_{q}(a,z)$ of $(a,z) \in [1,q^{g}) \times \C$ by 
\begin{equation} \label{def_AB}
\begin{bmatrix}
A_{q}(a,z) \\
B_{q}(a,z)
\end{bmatrix} 
= \frac{1}{2}
\begin{bmatrix}
1 & 0 \\
0 & -i 
\end{bmatrix}
T_n(a,z)
v_g(n)
\quad \text{if $q^{\frac{n-1}{2}} \leq a < q^{\frac{n}{2}}$}, 
\end{equation}
where $T_n(a,z)$ is the $2 \times (4g-2n+2)$ matrix valued function
\begin{equation}\label{def_T}
\begin{bmatrix}
c_g(a,z) & c_{g-1}(a,z) & \cdots & c_{-g+n}(a,z) & 0 & 0 & \cdots & 0 \\
0 & 0 & \cdots & 0 & s_g(a,z) & s_{g-1}(a,z) & \cdots & s_{-g+n}(a,z)
\end{bmatrix}
\end{equation}
with 
\begin{equation} \label{def_cs}
c_k(a,z):=((q^k/a)^{iz}+(q^k/a)^{-iz}), \quad s_k(a,z):=((q^k/a)^{iz}-(q^k/a)^{-iz}).
\end{equation}

Then the result of this section is stated as follows. 
\begin{theorem} \label{thm_6} 
Let $\underline{c}=(c_0,\cdots,c_g)$ be real coefficients 
of a self-reciprocal polynomial $P_g(x)$ of degree $2g$ in \eqref{def_Pg} 
and let $q>1$ be a real number. 
Define $E_q(z)$, $A_q(z)$, $B_q(z)$ by \eqref{def_E}, \eqref{def_A}, \eqref{def_B}, respectively. 
Define $H_q(a)$, $A_{q}(a,z)$ and $B_{q}(a,z)$ 
by substituting the numerical vector $\underline{c}$ into \eqref{def_Hq}, \eqref{def_AB}, respectively.  
Define $2g$ numbers $m_{2g-n}(\underline{c}\,;\log q)$ $(1\leq n \leq 2g)$ 
by substituting $\underline{c}$ into  \eqref{def_m2}.
Then we have 
\begin{enumerate}
\item[(1)] $m_{2g-1}(\underline{c}\,;\log q)>0$ and $m_{2g-1}(\underline{c}\,;\log q)^{-1}>0$,  
\item[(2)] $A_q(a,z)$ and $B_q(a,z)$ are defined on $[1,q^{1/2})$ and 
\[
A_q(1,z)=A_q(z), \quad B_q(1,z)=B_q(z).
\] 
\end{enumerate}
Let $1 \leq n_0 \leq 2g$. Suppose that $m_{2g-n}(\underline{c}\,;\log q)\not=0$ and $m_{2g-n}(\underline{c}\,;\log q)^{-1}\not=0$ 
for every $1 \leq n \leq n_0$. Then we have 
\begin{enumerate}
\item[(3)] $A_{q}(a,z)$ and $B_{q}(a,z)$ are defined and continuous on $[1,q^{n_0/2})$  with respect to $a$, 
\item[(4)] the left-sided limit $\lim_{a \to (q^{n_0/2})^-}(A_{q}(a,z), B_{q}(a,z))$ defines entire functions of $z$, 
\item[(5)] $A_{q}(a,z)$ and $B_{q}(a,z)$ are differentiable functions on $(q^{(n-1)/2},q^{n/2})$ with respect to $a$ for every $1 \leq n \leq n_0$, 
\item[(6)] $H_q(a)$, $A_{q}(a,z)$ and $B_{q}(a,z)$ satisfy the system
\begin{equation} \label{system_1}
-a\frac{\partial}{\partial a}
\begin{bmatrix}
A_{q}(a,z) \\ B_{q}(a,z)
\end{bmatrix}
= z 
\begin{bmatrix}
0 & -1 \\ 1 & 0
\end{bmatrix}
H_q(a)
\begin{bmatrix}
A_{q}(a,z) \\ B_{q}(a,z)
\end{bmatrix} \quad (z \in \C)
\end{equation}
for $a \in [1,q^{n_0}/2)$,
\end{enumerate}
Suppose that $A_q(a,z)$ and $B_q(a,z)$ are defined on $[q^{g-1/2},q^{g})$, 
that is, substitution of $\underline{c}$ into $v_g(2g)$ defines a numerical column vector of length $2$. 
Then, we have 
\begin{enumerate}
\item[(7)]  
$\displaystyle{
\lim_{a \to (q^{g})^-}
\begin{bmatrix}
A_{q}(a,z) \\
B_{q}(a,z)
\end{bmatrix}
= A_q(0)
\begin{bmatrix}
1 \\ 0 
\end{bmatrix}
= E_q(0)
\begin{bmatrix}
1 \\ 0
\end{bmatrix}
}$.
\end{enumerate}
\end{theorem}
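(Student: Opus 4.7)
The plan is to verify the seven claims in order, relying on direct computation for the initial and terminal conditions and on induction on $n$ for the interior intervals. For part (1), I would substitute the initial vector \eqref{def_v0} into \eqref{def_m2} with $n=1$: the numerator is $v_g(0)[1]+v_g(0)[2g+1] = 2c_0$ and the denominator is $v_g(0)[2g+2]-v_g(0)[4g+2] = 2gc_0\log q$, giving $m_{2g-1}(\underline{c};\log q) = 1/(g\log q)$, which is positive together with its reciprocal for $q>1$. For part (2), I would compute $v_g(1)=P_{2g-1}(m_{2g-1})^{-1}Q_{2g-1}v_g(0)$ from the explicit block formulas of Lemma \ref{lem_203}, substitute into \eqref{def_AB} at $a=1$, and exploit the palindromic structure of $\mathbf{a}_g(0)$ together with the antipalindromic structure of $\mathbf{b}_g(0)$ to match the expansions \eqref{def_A}, \eqref{def_B} of $A_q(z)$ and $B_q(z)$ term by term.

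For claims (3)--(6) the argument goes by induction on $n$ from $1$ to $n_0$. On each open subinterval $(q^{(n-1)/2}, q^{n/2})$, formula \eqref{def_AB} expresses $A_q(a,z)$ and $B_q(a,z)$ as linear combinations of the functions $c_k(a,z)$ and $s_k(a,z)$, which are entire in $z$ and smooth in $a$, settling differentiability (5) and the fact that the left-sided limits in (4) are entire in $z$. Using the elementary identities $-a\partial_a c_k(a,z) = iz\, s_k(a,z)$ and $-a\partial_a s_k(a,z) = iz\, c_k(a,z)$, verifying the ODE \eqref{system_1} on this subinterval reduces, by independence of the $s_{g-j}(a,z)$ as functions of $(a,z)$, to the \emph{key proportionality identity}
\[
v_g(n)[j+1] \;=\; m_{2g-n}(\underline{c};\log q)\, v_g(n)[2g-n+2+j], \qquad 0 \le j \le 2g-n,
\]
which says that the top half of $v_g(n)$ equals $m_{2g-n}$ times its bottom half. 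The case $j=0$ is literally the definition \eqref{def_m2} of $m_{2g-n}$. For the remaining cases, and for the continuity in (3) across interface points $a=q^{n/2}$, I would exploit the explicit block decomposition of $P_{2g-n}(m_{2g-n})^{-1}Q_{2g-n}$ in Lemma \ref{lem_203}: the specific prefactors $1/2$, $m_k/2$, $1/(2m_k)$, $1/2$ in the four blocks $M_{k,1},\ldots,M_{k,4}$, combined with the patterns in those blocks and the special form of $v_g(n-1)$ produced by the previous iteration, are arranged so that a suitable invariant propagates through \eqref{def_v1}.

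Part (7) is a short calculation. At $a=q^g$, the entry $s_g(q^g,z)$ vanishes and $c_g(q^g,z)=2$, so \eqref{def_AB} with $n=2g$ collapses to $A_q(q^g,z) = v_g(2g)[1]$ and $B_q(q^g,z)=0$ identically in $z$. It remains to check $v_g(2g)[1] = A_q(0)$, which follows by tracking the first component under the iteration: the first row of $P_k(m_k)^{-1}Q_k$ in Lemma \ref{lem_203} sends the top half $\mathbf{a}$ of $v_g(n-1)$ to $\mathbf{a}[1]+\mathbf{a}[k+2]$, and iterating this $2g$ times accumulates all the coefficients of $P_g$ with the correct multiplicities to produce $P_g(1)=2\sum_{k=0}^{g-1}c_k+c_g = A_q(0)$. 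The identification with $E_q(0)$ is automatic, since $B_q(0)=-A_q'(0)=0$ by the evenness of $A_q(z)$.

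The principal obstacle is establishing the key proportionality identity for $j\ge 1$. The identity is not a pointwise property of $P_k(m_k)^{-1}Q_k$ alone: the initial vector $v_g(0)$ itself does not satisfy "top $=\mu\cdot$ bottom", but one application of the recursion produces this relation and subsequent applications must preserve it. Pinning down the precise invariant on $v_g(n-1)$ that, under $P_{2g-n}(m_{2g-n})^{-1}Q_{2g-n}$ with $m_{2g-n}$ chosen by \eqref{def_m2}, simultaneously forces top $=m_{2g-n}\cdot$ bottom on $v_g(n)$ and reproduces the same structure at the next step — and verifying that it is launched correctly from the symmetric/antisymmetric initial data of \eqref{def_v0} — will require the bulk of the technical combinatorial work, together with a careful matching calculation at each interface point to derive (3).
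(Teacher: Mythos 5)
Your outline reproduces the paper's architecture for (1), (2), (5) and (7) --- the computation $m_{2g-1}(\underline{c}\,;\log q)=2c_0/(2gc_0\log q)=1/(g\log q)$, the explicit evaluation of $v_g(1)$ against \eqref{def_A} and \eqref{def_B}, and the collapse at $a=q^g$ via $s_g(q^g,z)=0$, $c_g(q^g,z)=2$ together with the all-ones first row of the iterated product giving $v_g(2g)[1]=2\sum_{k=0}^{g-1}c_k+c_g=A_q(0)$ --- and you have correctly isolated the proportionality identity as the heart of (6). But the step you declare to be ``the principal obstacle'' and leave open is exactly the step the paper closes in one line, and your diagnosis of why it should be hard is mistaken. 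The identity
\[
v_g(n)[j+1]=m_{2g-n}(\underline{c}\,;\log q)\,v_g(n)[2g-n+2+j]\qquad(0\le j\le 2g-n)
\]
is \emph{not} an invariant that must be launched from the symmetric initial data and then propagated; it holds after a \emph{single} application of \eqref{def_v1} to an \emph{arbitrary} input vector $v_g(n-1)$. The reason is structural: the recursion is the linear system $P_{2g-n}(m_{2g-n})\,v_g(n)=Q_{2g-n}\,v_g(n-1)$, the last $2g-n$ rows of $Q_{2g-n}$ are identically zero by construction, and the last $2g-n$ rows of $P_k(m_k)$ are $\bigl[\,\pmb{0}_{k,1}\ I_k \mid \pmb{0}_{k,1}\ -m_k I_k\,\bigr]$ with $k=2g-n$. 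Reading off those rows gives the cases $1\le j\le 2g-n$ verbatim, and the case $j=0$ is the third equality of \eqref{def_m2} (Lemma \ref{lem_203}). No analysis of the blocks $M_{k,1},\dots,M_{k,4}$ and no propagating invariant enter into (6) at all; as written, your plan would spend ``the bulk of the technical combinatorial work'' proving something that is immediate from the shape of $P_k$ and $Q_k$, and until that is recognized the proof of (6) is not actually closed.

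The place where the explicit blocks $M_{k,1},\dots,M_{k,4}$ genuinely are needed is the interface continuity in (3), which you mention only as ``a careful matching calculation.'' To make it precise: continuity of $(A_q(a,z),B_q(a,z))$ at $a=q^{(n-1)/2}$ is equivalent to $T_n(q^{(n-1)/2},z)\,v_g(n)=T_{n-1}(q^{(n-1)/2},z)\,v_g(n-1)$, hence to the matrix identity $T_n(q^{(n-1)/2},z)\,P_{2g-n}(m_{2g-n})^{-1}Q_{2g-n}=T_{n-1}(q^{(n-1)/2},z)$, which one checks row by row from the formulas for $M_{k,j}$ using the reflection identities $c_{g-(h+1)}(q^{(n-1)/2},z)=c_{-g+n+h}(q^{(n-1)/2},z)$ and $s_{g-(h+1)}(q^{(n-1)/2},z)=-s_{-g+n+h}(q^{(n-1)/2},z)$. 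This is a finite verification, independent of which vector $v_g(n-1)$ is being propagated. With (6) and (3) closed in this way, the remainder of your outline goes through as stated.
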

\begin{proof}
This is proved in Section \ref{section_5}. 
\end{proof}
\begin{corollary} \label{cor_1}
The $2 \times 2$ matrix valued function $H_q(a)$ of \eqref{def_Hq} attached to real coefficients $\underline{c}$ 
of $P_g(x)$ defines a canonical system on $[1,q^g)$ 
if and only if all zeros of $P_g(x)$ lie on $T$ and simple. 
If $H_q(a)$ defines a canonical system on $[1,q^g)$, 
then the pair of functions $(A_q(a,z)/E_q(0)$, $B_q(a,z)/E_q(0))$ is a solution 
of the canonical system. 
\end{corollary}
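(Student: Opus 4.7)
The plan is to combine Theorem~\ref{thm_1} (the positivity characterization) with Theorem~\ref{thm_6} (ODE and boundary behavior) and Definition~\ref{def_301} (the three axioms of a canonical system); little extra work is required at the corollary's level.

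First I would verify axioms (H1)--(H3) of Definition~\ref{def_301} directly from the formula $H_q(a)=\mathrm{diag}(m_q(a)^{-1},m_q(a))$. By \eqref{def_mq}, $H_q$ is piecewise constant on $[1,q^g)$, taking the value $\mathrm{diag}(m_{2g-n}(\underline{c};\log q)^{-1},\, m_{2g-n}(\underline{c};\log q))$ on $[q^{(n-1)/2}, q^{n/2})$ for $1\leq n\leq 2g$. Consequently, local integrability (H3) and nonvanishing on open subsets of positive measure (H2) amount precisely to the finiteness and nonzeroness of each $m_{2g-n}(\underline{c};\log q)$, while positive semidefiniteness (H1) amounts to $m_{2g-n}(\underline{c};\log q)>0$ and $m_{2g-n}(\underline{c};\log q)^{-1}>0$ for every $1\leq n\leq 2g$. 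By Theorem~\ref{thm_1}, these inequalities are equivalent to all zeros of $P_g(x)$ lying on $T$ and being simple; this yields the biconditional in the corollary.

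For the second assertion, assume $H_q$ defines a canonical system, so all values $m_{2g-n}(\underline{c};\log q)$ are finite and positive, and the hypothesis of Theorem~\ref{thm_6}(3)--(7) is in force with $n_0=2g$. Parts~(3), (5), (6) then give that $(A_q(a,z),B_q(a,z))$ is continuous on $[1,q^g)$, piecewise $C^1$ in $a$, and satisfies \eqref{system_1}, which has exactly the shape of \eqref{can_0} with $H=H_q$. Part~(7) gives $\lim_{a\to(q^g)^-}(A_q(a,z),B_q(a,z))^\top = E_q(0)\,(1,0)^\top$, so dividing through by $E_q(0)$ produces the canonical normalization $(1,0)^\top$ required by \eqref{can_0} and identifies $(A_q(\cdot,z)/E_q(0),\, B_q(\cdot,z)/E_q(0))$ as the distinguished solution of the canonical system attached to $H_q$.

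The only mildly subtle point --- and the main thing to mind --- is the non-vanishing of the normalization constant $E_q(0)$. Since $A_q$ is real and even, $B_q(0)=-A_q'(0)=0$, so $E_q(0)=A_q(0)=P_g(1)$; the corollary's normalization implicitly assumes $P_g(1)\neq 0$, i.e.\ $x=1$ is not among the zeros of $P_g$. Beyond this bookkeeping, no serious obstacle arises: the analytic content has already been packaged into Theorem~\ref{thm_6}, and the positivity-versus-zeros equivalence into Theorem~\ref{thm_1}, so the corollary is essentially a read-off.
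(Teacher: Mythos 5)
Your argument follows the paper's proof almost verbatim: translate the axioms (H1)--(H3) of Definition~\ref{def_301} into the positivity of the $m_{2g-n}(\underline{c}\,;\log q)$ and their inverses via the piecewise-constant form \eqref{def_Hq}, invoke Theorem~\ref{thm_1} for the equivalence with the zeros lying on $T$ and being simple, and invoke Theorem~\ref{thm_6} for the differential equation and the boundary normalization. The one place you fall short is the non-vanishing of $E_q(0)$: you present $P_g(1)\neq 0$ as an \emph{implicit assumption} of the corollary, but it is in fact a consequence of the hypothesis and must be proved, since otherwise the normalization by $E_q(0)$ is illegitimate. The paper closes this in one line: $A_q(z)$ is even and $B_q(z)=-A_q'(z)$ is odd, so $A_q(0)=0$ would force $z=0$ to be a zero of $A_q$ of multiplicity at least two, contradicting the simplicity of the zeros already secured (via Theorem~\ref{thm_1}) in the case where $H_q$ defines a canonical system; hence $E_q(0)=A_q(0)=P_g(1)\neq 0$ automatically. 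With that observation supplied, your proof is complete and coincides with the paper's.
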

\begin{proof}
If all zeros of $P_g(x)$ lie on $T$ and simple, 
then $A_q(z)$ has only simple real zeros. 
In particular, $E_q(0)=A_q(0)\not=0$, since $A_q(z)$ is even and $B_q(z)$ is odd. 
Hence, by Theorem \ref{thm_1} and Theorem \ref{thm_6}, 
$H_q(a)$ defines a canonical system on $[1,q^g)$ 
and $(A_q(a,z)/E_q(0)$, $B_q(a,z)/E_q(0))$ is its solution. 
Conversely, if $H_q(a)$ defines a canonical system, 
then $m_{2g-n}(\underline{c}\,;\log q)>0$ 
and $m_{2g-n}(\underline{c}\,;\log q)^{-1}>0$ for every $1 \leq n \leq 2g$
by Definition \ref{def_301}.   
Hence, all zeros of $P_g(x)$ lie on $T$ and simple by Theorem \ref{thm_1}.
\end{proof}

%
%
\subsection{Systems of the second kind} \label{section_4_2}
%
%
By using the function of \eqref{def_A}, we define 
\begin{equation} \label{def_E2}
E_{q,\omega}(z) := A_q(z+i\omega), 
\end{equation}
\begin{equation}
A_{q,\omega}(z) := \frac{1}{2}(E_{q,\omega}(z)+E_{q,\omega}^\sharp(z)), \quad 
B_{q,\omega}(z) := \frac{i}{2}(E_{q,\omega}(z)-E_{q,\omega}^\sharp(z)).
\end{equation}
Then $A_{q,\omega}(z)$ and $B_{q,\omega}(z)$ are real entire functions satisfying 
\begin{equation} \label{def_E2_2}
E_{q,\omega}(z) = A_{q,\omega}(z) - i B_{q,\omega}(z)
\end{equation} 
and 
\begin{equation} \label{def_E2_3}
E_{q,\omega}^\sharp(z) =  A_q(z-i\omega). 
\end{equation}
Therefore, we obtain 
\begin{equation}
\aligned
A_{q,\omega}(z) 
& = \frac{1}{2}\left(A_q(z+i\omega)+A_q(z-i\omega)\right) \\ 
& = \frac{1}{2}\sum_{k=1}^{g} c_{g-k}\Bigl(q^{k \omega} + q^{-k\omega}\Bigr)\Bigl( q^{kiz} + q^{-kiz}\Bigr) + c_g, \\
B_{q,\omega}(z) 
& = \frac{i}{2}\left(A_q(z+i\omega)-A_q(z-i\omega)\right) \\
& = -\frac{i}{2}\sum_{k=1}^{g} c_{g-k}\Bigl(q^{k \omega} - q^{-k\omega}\Bigr)\Bigl( q^{kiz} - q^{-kiz}\Bigr). 
\endaligned
\end{equation}
Further, by using $v_g(n)$ and $m_{2g-n}(\underline{c}\,;q^\omega)$ of Section \ref{section_2_3}, 
we define the function $m_{q,\omega}(a)$ of $a \in [1,q^{g})$ by
\[
m_{q,\omega}(a)
=m_{2g-n}(\underline{c}\,;q^\omega) := \frac{v_g(n)[1]}{v_g(n)[2g-n+2]} 
\quad \text{if} \quad q^{\frac{n-1}{2}} \leq a < q^{\frac{n}{2}}
\]
and define the $2 \times 2$ matrix valued function $H_{q,\omega}(a)$ of $a \in [1,q^{g})$ by
\begin{equation} \label{def_Hq_2}
H_{q,\omega}(a) = 
\begin{bmatrix}
m_{q,\omega}(a)^{-1} & 0 \\ 0 & m_{q,\omega}(a)
\end{bmatrix}.
\end{equation}
In addition, we define functions $A_{q,\omega}(a,z)$ and $B_{q,\omega}(a,z)$ of $a \in [1,q^{g})$ by 
\[
\begin{bmatrix}
A_{q,\omega}(a,z) \\
B_{q,\omega}(a,z)
\end{bmatrix} 
= \frac{1}{2}
\begin{bmatrix}
1 & 0 \\
0 & -i 
\end{bmatrix} T_n(a,z) v_g(n) \quad \text{if} \quad q^{\frac{n-1}{2}} \leq a < q^{\frac{n}{2}}
\]
as well as \eqref{def_AB}, where $T_n(a,z)$ is the $2 \times (4g-2n+2)$ matrix valued function in \eqref{def_T}.

\begin{theorem} \label{thm_7} 
All assertions of Theorem \ref{thm_6} hold  
if we replace 
$E_q(z)$, $A_q(z)$, $B_q(z)$, $H_q(a)$, $A_q(a,z)$, $B_q(a,z)$ 
by  
$E_{q,\omega}(z)$, $A_{q,\omega}(z)$, $B_{q,\omega}(z)$, $H_{q,\omega}(a)$, $A_{q,\omega}(a,z)$, $B_{q,\omega}(a,z)$, 
respectively.  
\end{theorem}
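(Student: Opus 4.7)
The plan is to mirror the proof of Theorem~\ref{thm_6} from Section~\ref{section_5} almost verbatim. The crucial observation is that the matrices $P_k(m_k)$ and $Q_k$ governing the recurrence \eqref{def_v1} depend neither on $\omega$ nor on $\log q$; only the initial vector $v_g(0)$ and the interpretation of the resulting scalars $m_{2g-n}$ change. Accordingly, every algebraic step of Section~\ref{section_5} translates to the $\omega$-setting by replacing the initial vector \eqref{def_v0} by \eqref{def_v0_2} and the additive quantity $\log q$ by the multiplicative analogue involving $q^{\omega}$ and $q^{-\omega}$.

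First I would verify the base case at $a=1$. Using $c_k(1,z)=q^{kiz}+q^{-kiz}$ and $s_k(1,z)=q^{kiz}-q^{-kiz}$ together with the explicit form of ${\mathbf a}_{g,\omega}(0)$ and ${\mathbf b}_{g,\omega}(0)$, a direct computation of the matrix product $T_0(1,z)\,v_g(0)$ yields the explicit Fourier expansions of $A_{q,\omega}(z)$ and $B_{q,\omega}(z)$ displayed immediately after \eqref{def_E2_3}, which establishes the analogue of assertion~(2). The positivity assertion~(1) reduces to computing the four relevant components of $v_g(0)$ from \eqref{def_v0_2}, which gives
\[
m_{2g-1}(\underline{c}\,;q^\omega)
=\frac{c_0 q^{g\omega}+c_0 q^{-g\omega}}{c_0 q^{g\omega}-c_0 q^{-g\omega}}
=\frac{q^{g\omega}+q^{-g\omega}}{q^{g\omega}-q^{-g\omega}},
\]
which is strictly positive for $q>1$ and $\omega>0$; note that this coincides with the convention \eqref{def_m3_2} for $m_{2g}(\underline{c}\,;q^\omega)$.

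Next I would carry out the inductive step on $n$ exactly as in Section~\ref{section_5}. Given the representation of $(A_{q,\omega}(a,z),B_{q,\omega}(a,z))$ on $[q^{(n-1)/2},q^{n/2})$ through $T_n(a,z)\,v_g(n)$, one must verify two things: (i) this pair satisfies the analogue of \eqref{system_1} with piecewise-constant Hamiltonian $H_{q,\omega}(a)=\mathrm{diag}(m_{2g-n}^{-1},m_{2g-n})$ throughout the interval, and (ii) the left-sided limit at $a=q^{n/2}$ agrees with the value produced by the next block, matching via the identity $v_g(n+1)=P_{2g-n-1}(m_{2g-n-1})^{-1}Q_{2g-n-1}\,v_g(n)$. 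Both verifications rest on the same functional identities for the entries $c_k,s_k$ in \eqref{def_cs} that drive Section~\ref{section_5}, and they are insensitive to whether $m_{2g-n}$ arose from a logarithmic or a $q^\omega$-flavoured initial vector. Continuity, differentiability, and the analogues of assertions~(3)--(6) then propagate by induction, with the entireness in $z$ claimed in (4) being automatic because the operations involved are finite linear combinations with entire coefficients.

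The final boundary statement~(7) requires a separate direct calculation. At the last step $n=2g$, the vector $v_g(2g)$ has length two and the matrix $T_{2g}(a,z)$ involves only the pair $c_g(a,z),s_g(a,z)$; letting $a\to(q^{g})^-$ sends these to $2$ and $0$ respectively, so the right-hand side collapses to a scalar multiple of $(1,0)^t$. The scalar must then be identified with $E_{q,\omega}(0)=A_q(i\omega)$; the cleanest route is to track a linear functional of $v_g(n)$ that is invariant under \eqref{def_v1} up to a known factor, evaluate it on both \eqref{def_v0_2} and on $v_g(2g)$, and compare. I expect the bookkeeping of this invariance, together with the indexing and sign conventions in the inductive matching step~(ii) above, to be the main technical obstacle: each individual step is routine, but the combinatorics of the blockwise shift of components between $v_g(n)$ and $v_g(n+1)$ are the same delicate ones that drive the proof in Section~\ref{section_5}.
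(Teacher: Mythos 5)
Your proposal is correct and follows essentially the same route as the paper: the paper itself omits the proof of Theorem \ref{thm_7}, stating only that it is obtained by repeating the argument of Section \ref{section_5} with the initial vector \eqref{def_v0} replaced by \eqref{def_v0_2}, which is precisely what you carry out (and your explicit checks --- that $T_0(1,z)v_g(0)$ reproduces the displayed expansions of $A_{q,\omega}$ and $B_{q,\omega}$, that $m_{2g-1}(\underline{c}\,;q^\omega)=(q^{g\omega}+q^{-g\omega})/(q^{g\omega}-q^{-g\omega})>0$, and that the all-ones first row of $S_{2g-1}$ gives $v_g(2g)[1]=A_{q,\omega}(0)=E_{q,\omega}(0)$ --- are exactly the points where the two settings differ). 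No gap.
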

\begin{corollary} \label{cor_2}
The $2 \times 2$ matrix valued function $H_{q,\omega}(a)$ defines a canonical system on $[1,q^g)$ 
for every $\omega>0$ if and only if all zeros of $P_g(x)$ lie on $T$. 
If $H_{q,\omega}(a)$ defines a canonical system, 
then $(A_{q,\omega}(a,z)/E_{q,\omega}(0)$, $B_{q,\omega}(a,z)/E_{q,\omega}(0))$ 
is its solution. 
\end{corollary}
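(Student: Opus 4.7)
The plan is to mirror the argument used for Corollary \ref{cor_1}, with Theorem \ref{thm_3} taking the role of Theorem \ref{thm_1} and Theorem \ref{thm_7} taking the role of Theorem \ref{thm_6}. The extra flexibility provided by the parameter $\omega>0$ is what makes the corollary cover the case of multiple zeros on $T$, and this is the only place where any care beyond the proof of Corollary \ref{cor_1} should be needed.

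First I would dispatch the ``if'' direction. Assuming all zeros of $P_g(x)$ lie on $T$ (possibly with multiplicities), Theorem \ref{thm_3} yields $m_{2g-n}(\underline{c}\,;q^\omega)>0$ and $m_{2g-n}(\underline{c}\,;q^\omega)^{-1}>0$ for every $1\leq n\leq 2g$ and every $\omega>0$. Thus $m_{q,\omega}(a)$ is a positive step function on $[1,q^g)$, and the diagonal matrix $H_{q,\omega}(a)$ of \eqref{def_Hq_2} is pointwise positive definite. Verifying conditions (H1)--(H3) of Definition \ref{def_301} is then routine: positive definiteness gives (H1) and (H2), while piecewise constancy on the bounded interval $[1,q^g)$ gives (H3). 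For the terminal condition required by \eqref{can_0}, I would invoke the $\omega$-analog of Theorem \ref{thm_6}\,(7) furnished by Theorem \ref{thm_7}, which yields $\lim_{a\to (q^g)^-}{}^t(A_{q,\omega}(a,z),B_{q,\omega}(a,z))=E_{q,\omega}(0)\,{}^t(1,0)$. Here the nonvanishing $E_{q,\omega}(0)\neq 0$ has to be confirmed: by Lemma \ref{lem_401}\,(1) the function $A_q(z)$ has only real zeros, so $E_{q,\omega}(0)=A_q(i\omega)\neq 0$ as soon as $\omega>0$. Normalizing by this nonzero scalar, together with the system equation \eqref{system_1} in the $\omega$-version from Theorem \ref{thm_7}\,(6), produces the canonical system and exhibits $(A_{q,\omega}(a,z)/E_{q,\omega}(0),\,B_{q,\omega}(a,z)/E_{q,\omega}(0))$ as its solution.

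For the converse I would simply read positivity off Definition \ref{def_301}. If $H_{q,\omega}(a)$ defines a canonical system on $[1,q^g)$ for every $\omega>0$, then (H1) forces both diagonal entries $m_{q,\omega}(a)$ and $m_{q,\omega}(a)^{-1}$ to be nonnegative; the simultaneous presence of an entry and its inverse upgrades this to strict positivity. Hence $m_{2g-n}(\underline{c}\,;q^\omega)>0$ and $m_{2g-n}(\underline{c}\,;q^\omega)^{-1}>0$ for every $n$ and every $\omega>0$, and Theorem \ref{thm_3} then places all zeros of $P_g(x)$ on $T$.

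The only conceptual subtlety, and really the reason the parameter $\omega>0$ is introduced at all, is the effect of the $i\omega$-shift in $E_{q,\omega}(z)=A_q(z+i\omega)$: when $P_g(x)$ has multiple zeros on $T$ the unshifted $E_q(z)$ lies only in the larger class $\overline{\rm HB}$ and the unshifted Hamiltonian $H_q$ may degenerate, whereas for each $\omega>0$ the shifted function $E_{q,\omega}(z)$ has all its zeros in the open lower half-plane and no real zeros, placing it genuinely in class HB. With this observation in hand, and given Theorems \ref{thm_3} and \ref{thm_7}, no further obstacle arises and the argument is essentially formal.
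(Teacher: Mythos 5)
Your proposal is correct and follows essentially the same route as the paper: positivity of the $m_{2g-n}(\underline{c}\,;q^\omega)$ via Theorem \ref{thm_3}, the system and terminal condition via Theorem \ref{thm_7}, and the converse by reading (H1) off Definition \ref{def_301} and applying Theorem \ref{thm_3} again. The only (harmless) divergence is in checking $E_{q,\omega}(0)\neq 0$: you observe directly that $E_{q,\omega}(0)=A_q(i\omega)\neq 0$ because $A_q$ has only real zeros, whereas the paper notes that $A_{q,\omega}$ is even with only simple real zeros, so $E_{q,\omega}(0)=A_{q,\omega}(0)\neq 0$; both are valid.
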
 

\noindent
We prove Theorem \ref{thm_7} and Corollary \ref{cor_2} in Section \ref{section_7}.   

%
%
%
\section{Proof of Theorem \ref{thm_6} } \label{section_5}
%
%

\noindent 
{\bf Proof of (1) and (2).} 
By  definition \eqref{def_m2}, 
\begin{equation} \label{m_2g-1}
m_{2g-1}=\frac{v_g(0)[1]+v_g(0)[2g+1]}{v_g(0)[2g+2]-v_g(0)[4g+2]}
=\frac{1}{g\log q}>0,
\end{equation}
since $q>1$. This implies (1). 
By definition \eqref{def_v1}, 
$v_g(1)=P_{2g-1}(m_{2g-1})^{-1}Q_{2g-1}\cdot v_g(0)$.  
Therefore, we have
\[
v_g(1) 
= 
\begin{bmatrix}
{\mathbf a}_g(1) \\
{\mathbf b}_g(1)
\end{bmatrix},\quad 
{\mathbf a}_g(1)
=\begin{bmatrix}
2c_0 \\
c_1(1 + m_{2g-1}\log(q^{g-1})) \\
c_2(1 + m_{2g-1}\log(q^{g-2})) \\
\vdots \\
c_{g-1}(1 + m_{2g-1}\log q)) \\
c_g \\
c_{g-1}(1 - m_{2g-1}\log q)) \\
\vdots \\
c_2(1 - m_{2g-1}\log(q^{g-2})) \\
c_1(1 - m_{2g-1}\log(q^{g-1})) \\
\end{bmatrix}, \quad 
{\mathbf b}_g(1)
=
\begin{bmatrix}
2c_0 \log(q^g) \\
c_1(m_{2g-1}^{-1} + \log(q^{g-1})) \\
c_2(m_{2g-1}^{-1} + \log(q^{g-2})) \\
\vdots \\
c_{g-1}(m_{2g-1}^{-1} + \log q) \\
c_g m_{2g-1}^{-1} \\
c_{g-1}(m_{2g-1}^{-1} - \log q) \\
\vdots \\
c_2(m_{2g-1}^{-1} - \log(q^{g-2})) \\
c_1(m_{2g-1}^{-1} - \log(q^{g-1}))
\end{bmatrix}.
\]
By substituting \eqref{m_2g-1} into this formula of $v_g(1)$, 
we obtain 
\begin{equation} \label{vec_v1}
v_g(1) 
= 
\begin{bmatrix}
{\mathbf a}_g(1) \\
{\mathbf b}_g(1)
\end{bmatrix},\quad
{\mathbf a}_g(1)
=\begin{bmatrix}
2c_0 \\
\frac{2g-1}{g}c_1 \\
\frac{2g-2}{g}c_2 \\
\vdots \\
\frac{g+1}{g}c_{g-1} \\
c_g \\
\frac{g-1}{g}c_{g-1} \\
\vdots \\
\frac{2}{g}c_2 \\
\frac{1}{g}c_1 \\
\end{bmatrix}, \quad 
{\mathbf b}_g(1)
=
\begin{bmatrix}
2g c_0 \log q \\
(2g-1) c_1 \log q\\
(2g-2) c_2 \log q \\
\vdots \\
(g+1)c_{g-1} \log q \\
g c_g \log q \\
(g-1)c_{g-1} \log q \\
\vdots \\
2 c_2  \log q  \\
 c_1 \log q 
\end{bmatrix}.
\end{equation}
By definition \eqref{def_AB}, 
\[
\aligned
\, & 
\begin{bmatrix}
A_{q}(1,z) \\
B_{q}(1,z)
\end{bmatrix} 
= \frac{1}{2}
\begin{bmatrix}
1 & 0 \\
0 & -i 
\end{bmatrix}\\
& \times 
\begin{bmatrix}
c_g(1,z) & c_{g-1}(1,z) & \cdots & c_{-g+1}(1,z) & 0 & 0 & \cdots & 0 \\
0 & 0 & \cdots & 0 & s_g(1,z) & s_{g-1}(1,z) & \cdots & s_{-g+1}(1,z)
\end{bmatrix}
v_g(1).
\endaligned
\]
On the other hand, 
it is easy to see that $c_{-k}(1,z)=c_{k}(1,z)$, $s_{-k}(1,z)=-s_{k}(1,z)$, $c_0(1,z)=2$, $s_0(1,z)=0$ 
by definition \eqref{def_cs}. 
Hence we obtain
\[
\aligned
A_{q}(1,z) 
& = \sum_{k=0}^{g-1} c_k c_{g-k}(1,z) + c_g = A_q(z), \\
B_q(1,z) 
& = -i(\log q)\sum_{k=0}^{g-1} (g-k)c_k s_{g-k}(1,z) = B_q(z)
\endaligned
\]
and complete the proof. \hfill $\Box$  
\medskip

\noindent 
{\bf Proof of (3) and (4).} 
By the assumption and definitions \eqref{def_v1} and \eqref{def_m2}, 
numerical vectors $v_g(1),\cdots,v_g(n_0)$ are well-defined. 
Thus, functions $A_q(a,z)$ and $B_q(a,z)$ are defined on $[1,q^{n_0/2})$ 
and (4) is trivial by definition \eqref{def_AB}. 
Therefore, it is sufficient to prove that $A_q(a,z)$ and $B_q(a,z)$ are continuous 
at $a=q^{(n-1)/2}$ for every $1 \leq n \leq n_0-1$. 
By definition \eqref{def_AB}, the continuity of $A_q(a,z)$ and $B_q(a,z)$ at $a=q^{(n-1)/2}$ 
is equivalent to the equality 
\[
T_n(q^{(n-1)/2},z)
v_g(n)
=
T_{n-1}(q^{(n-1)/2},z)
v_g(n-1),
\]
and this is also equivalent to
\[
T_n(q^{(n-1)/2},z)
P_{2g-n}(m_{2g-n})^{-1}Q_{2g-n}
=
T_{n-1}(q^{(n-1)/2},z),
\]
where $m_k=m_k(\underline{c}\,;\log q)$ 
and $T_n(a,z)$ is in \eqref{def_T}.
By the formula of $P_{2g-n}(m_{2g-n})^{-1}Q_{2g-n}$ in the proof of Lemma \ref{lem_203}, 
the latter equality means 
\[
\aligned
\begin{bmatrix}
c_g(q^{(n-1)/2},z) & \cdots & c_{-g+n}(q^{(n-1)/2},z) 
\end{bmatrix}
&
\begin{bmatrix}
M_{2g-n,1}& M_{2g-n,2}
\end{bmatrix} \\
= & 
\begin{bmatrix}
c_g(q^{(n-1)/2},z) & \cdots & c_{-g+n-1}(q^{(n-1)/2},z) 
\end{bmatrix},
\endaligned
\]
\[
\aligned
\begin{bmatrix}
s_g(q^{(n-1)/2},z) & \cdots & s_{-g+n}(q^{(n-1)/2},z) 
\end{bmatrix}
&
\begin{bmatrix}
M_{2g-n,3}& M_{2g-n,4}
\end{bmatrix} \\
= & 
\begin{bmatrix}
s_g(q^{(n-1)/2},z) & \cdots & s_{-g+n-1}(q^{(n-1)/2},z) 
\end{bmatrix}.
\endaligned
\]
These equalities follow from elementary identities 
\[
\aligned
c_{g-(h+1)}(q^{(n-1)/2},z) &= c_{-g+n+h}(q^{(n-1)/2},z), \\
s_{g-(h+1)}(q^{(n-1)/2},z) &= -s_{-g+n+h}(q^{(n-1)/2},z) 
\endaligned
\]
and the definition of $M_{k,j}$ ($1 \leq j \le 4$) in the proof of Lemma \ref{lem_203}.  
\hfill
 $\Box$  
\medskip 

\noindent 
{\bf Proof of (5).} This is obvious by definition \eqref{def_AB}. \hfill $\Box$  
\medskip

\noindent 
{\bf Proof of (6).} 
Suppose that $q^{(n-1)/2} \leq a < q^n$. 
By obvious equalities 
\[
a\frac{\partial}{\partial a}c_k(a,z)=(-iz)s_k(a,z), \quad 
a\frac{\partial}{\partial a}s_k(a,z)=(-iz)c_k(a,z),
\]
we have
\[
\aligned
\, & a\frac{\partial}{\partial a}
\begin{bmatrix}
A_{q}(a,z) \\
B_{q}(a,z)
\end{bmatrix} = \frac{z}{2}
\begin{bmatrix}
-i & 0 \\
0 & -1 
\end{bmatrix}\\
& \times 
\begin{bmatrix}
s_g(a,z) & s_{g-1}(a,z) & \cdots & s_{-g+n}(a,z) & 0 & 0 & \cdots & 0 \\
0 & 0 & \cdots & 0 & c_g(a,z) & c_{g-1}(a,z) & \cdots & c_{-g+n}(a,z)
\end{bmatrix}
v_g(n).
\endaligned
\]
By definition \eqref{def_v1} of $v_g(n)$, we have $P_{2g-n}(m_{2g-n})v_g(n)=Q_{2g-n}v_g(n-1)$. 
By looking at the $(2g-n)$-th row from the bottom, we have
\[
v_g(n)[j]= m_{2g-n}\cdot v_g(n)[j+2g+1-n] \quad \text{for} \quad  1\leq j \leq 2g-n.
\] 
This equality implies that 
\[
\aligned 
\, &
\begin{bmatrix}
s_g(a,z) & s_{g-1}(a,z) & \cdots & s_{-g+n}(a,z) & 0 & 0 & \cdots & 0 \\
0 & 0 & \cdots & 0 & c_g(a,z) & c_{g-1}(a,z) & \cdots & c_{-g+n}(a,z)
\end{bmatrix}
v_g(n) \\
& = 
\begin{bmatrix}
0 & 1 \\
1 & 0 
\end{bmatrix}
\begin{bmatrix}
m_{2g-n}^{-1} & 0 \\
0 & m_{2g-n}
\end{bmatrix} \\
& \times
\begin{bmatrix}
c_g(a,z) & c_{g-1}(a,z) & \cdots & c_{-g+n}(a,z) & 0 & 0 & \cdots & 0 \\
0 & 0 & \cdots & 0 & s_g(a,z) & s_{g-1}(a,z) & \cdots & s_{-g+n}(a,z)
\end{bmatrix}
v_g(n).
\endaligned
\]
By using the identity 
\[
\begin{bmatrix}
-i & 0 \\
0 & -1 
\end{bmatrix}
\begin{bmatrix}
0 & 1 \\
1 & 0 
\end{bmatrix}
\begin{bmatrix}
m_{2g-n}^{-1} & 0 \\
0 & m_{2g-n} 
\end{bmatrix}
= -
\begin{bmatrix}
0 & -1 \\
1 & 0 
\end{bmatrix}
\begin{bmatrix}
m_{2g-n}^{-1} & 0 \\
0 & m_{2g-n} 
\end{bmatrix}
\begin{bmatrix}
1 & 0 \\
0 & -i 
\end{bmatrix},
\]
we obtain
\[
-a\frac{\partial}{\partial a}
\begin{bmatrix}
A_{q}(a,z) \\
B_{q}(a,z)
\end{bmatrix} 
= z
\begin{bmatrix}
0 & -1 \\
1 & 0 
\end{bmatrix}
\begin{bmatrix}
m_{2g-n}^{-1} & 0 \\
0 & m_{2g-n} 
\end{bmatrix}
\begin{bmatrix}
A_{q}(a,z) \\
B_{q}(a,z)
\end{bmatrix} 
\]
for $q^{(n-1)/2} \leq a < q^{n/2}$. 
This implies (6) by \eqref{def_Hq}. 
\hfill $\Box$  
\medskip 

\noindent 
{\bf Proof of (7).} Note that $E_q(0)=A_q(0)$, since $B_q(z)$ is odd. We have 
\[
\aligned
\lim_{a \to (q^{g})^-}
\begin{bmatrix}
A_{q}(a,z) \\
B_{q}(a,z)
\end{bmatrix}
&= \frac{1}{2}
\begin{bmatrix}
1 & 0 \\
0 & -i 
\end{bmatrix}
\begin{bmatrix}
c_g(q^{g},z) & 0 \\
0 & s_g(q^{g},z) 
\end{bmatrix}
v_g(2g) \\
&= \frac{1}{2}
\begin{bmatrix}
1 & 0 \\
0 & -i 
\end{bmatrix}
\begin{bmatrix}
2 & 0 \\
0 & 0 
\end{bmatrix}
v_g(2g)
=
\begin{bmatrix}
v_g(2g)[1] \\ 0
\end{bmatrix}
\endaligned
\]
by definition \eqref{def_AB}. 
Therefore, it is sufficient to show that 
\[
v_{g}(2g)[1] = 2 \sum_{k=0}^{g-1} c_k + c_g=A_q(0).
\]
In order to prove this, we put 
\[
S_n=P_0^{-1}Q_0 P_1(m_1)^{-1}Q_1 \cdots P_n(m_n)^{-1}Q_n \quad (n=0,1,2,\cdots),
\]
where we understand $m_k$ as parameters. 
The the size of $S_n$ is $2 \times (2n+4)$ by definitions of $P_k(m_k)$ and $Q_k$. 
We have 
\[
S_0=P_0^{-1}Q_0=\begin{bmatrix} 1 & 1 & 0 & 0 \\ 0 & 0 & 1 & -1\end{bmatrix} 
\]
and find that the first row of $S_n$ has the form  
\[
\underbrace{1~1~\cdots~1}_{n+2}~\underbrace{0~0~\cdots~0}_{n+2}
\]
by the induction using the formula of $P_k(m_k)^{-1}Q_k$ in the proof of Lemma \ref{lem_203}. 
By definition \eqref{def_v1}, we have $v_{g}(2g)[1]=(\text{the first row of $S_{2g-1}$})\cdot v_g(0)$. 
Hence we obtain 
\[
v_{g}(2g)[1]=(\underbrace{1~1~\cdots~1}_{2g+1}~\underbrace{0~0~\cdots~0}_{2g+1})\cdot v_g(0)= 2 \sum_{k=0}^{g-1} c_k + c_g = A_q(0).
\]
We complete the proof. \hfill $\Box$  

%
\section{Proof of Theorem \ref{thm_1}} \label{section_6}

%
%
\subsection{Preparations} \label{section_6_1}
%
%
We prepare two propositions for the proof of Theorem \ref{thm_1}. 
\begin{proposition} \label{prop_601}
Let $1 \leq a_1< a_0 \leq g^g$. 
\begin{enumerate}
\item Assume that $m_{q}(a)\not=0$ and $m_{q}(a)^{-1} \not=0$ for every $ 1 \leq  a \leq a_0$. 
Then there exists a $2 \times 2$ matrix valued function $M(a_1,a_0;z)$ 
such that all entries are entire functions of $z$, satisfies 
\begin{equation} \label{prop_601_1}
\begin{bmatrix} 
A_q(a_1,z) \\ B_q(a_1,z)
\end{bmatrix} 
= 
M(a_1,a_0;z)
\begin{bmatrix}
A_q(a_0,z) \\ B_q(a_0,z)
\end{bmatrix},
\end{equation}
and $\det M(a_1,a_0;z)=1$. 
\item Assume that $m_{q}(a)\not=0$ and $m_{q}(a)^{-1} \not=0$ for every $ 1 \leq  a < a_0$. 
Then the matrix valued function $M(a_1,a;z)$ of {\rm (1)} is left-continuous as a function of $a$ and 
\begin{equation} \label{prop_601_2}
\begin{bmatrix} 
A_q(a_1,z) \\ B_q(a_1,z)
\end{bmatrix} 
=
\lim_{a \to a_0^-} M(a_1,a;z)
\lim_{a \to a_0^-}
\begin{bmatrix}
A_q(a,z) \\ B_q(a,z)
\end{bmatrix}
\end{equation}
holds as a vector valued function of $z \in \C$.
\end{enumerate}
\end{proposition}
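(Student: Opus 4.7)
The plan is to exploit the piecewise constant structure of $H_q(a)$ on the partition $[1,q^g) = \bigsqcup_{n=1}^{2g}[q^{(n-1)/2},q^{n/2})$. On each sub-interval, $H_q(a)$ equals the constant matrix $H_n = \operatorname{diag}(m_{2g-n}^{-1},m_{2g-n})$, so the system \eqref{system_1} of Theorem \ref{thm_6} becomes a linear ODE with constant coefficients after the change of variable $u = \log a$. Setting $J = \begin{bmatrix} 0 & -1 \\ 1 & 0 \end{bmatrix}$, the product $JH_n$ satisfies $(JH_n)^2 = -I$, so a direct computation gives the explicit local propagator
\[
\exp\!\bigl(z\log(a_0/a_1)\,JH_n\bigr)
= \begin{bmatrix}
\cos(z\log(a_0/a_1)) & -m_{2g-n}\sin(z\log(a_0/a_1)) \\
m_{2g-n}^{-1}\sin(z\log(a_0/a_1)) & \cos(z\log(a_0/a_1))
\end{bmatrix}
\]
for $a_1, a_0$ in the same sub-interval. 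Its entries are entire in $z$ and its determinant is $\cos^2 + \sin^2 = 1$, which is the trace-zero phenomenon applied to $zJH_n$.

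Second, I would define $M(a_1,a_0;z)$ globally by composition: if $a_1 \in [q^{(n_1-1)/2},q^{n_1/2})$ and $a_0 \in [q^{(n_0-1)/2},q^{n_0/2})$ with $n_1 \le n_0$, set
\[
M(a_1,a_0;z)
= M_{n_1}\!\bigl(a_1,q^{n_1/2}\bigr)\cdot M_{n_1+1}\!\bigl(q^{n_1/2},q^{(n_1+1)/2}\bigr)\cdots M_{n_0}\!\bigl(q^{(n_0-1)/2},a_0\bigr),
\]
where each $M_n(\cdot,\cdot)$ is the explicit local propagator above. The hypothesis $m_q(a),m_q(a)^{-1} \neq 0$ ensures every $H_n$ in the chain is defined. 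The crucial gluing step is to verify \eqref{prop_601_1}: on each piece, the local propagator sends $(A_q(q^{n/2-},z),B_q(q^{n/2-},z))^T$ to $(A_q(q^{(n-1)/2},z),B_q(q^{(n-1)/2},z))^T$ by construction, and Theorem \ref{thm_6}(3) guarantees that $(A_q(a,z),B_q(a,z))^T$ is continuous across each breakpoint $q^{n/2}$, so the intermediate vectors match up in the telescoping composition. Entirety in $z$ is preserved by finite products, and the determinant identity $\det M(a_1,a_0;z)=1$ follows from multiplicativity.

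For part (2), within the half-open interval $[q^{(n_0-1)/2},q^{n_0/2})$ the propagator $M(a_1,a;z)$ depends continuously (indeed, analytically) on $a$ because it differs from $M(a_1,q^{(n_0-1)/2};z)$ by a single local factor $M_{n_0}(q^{(n_0-1)/2},a;z)$ which is a matrix of trigonometric functions in $\log a$. Taking $a \to a_0^-$ yields a well-defined limit matrix, and multiplying this limit against $\lim_{a\to a_0^-}(A_q(a,z),B_q(a,z))^T$ (which exists and is entire in $z$ by Theorem \ref{thm_6}(4)) gives \eqref{prop_601_2} by continuity applied to \eqref{prop_601_1} for any $a<a_0$.

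I do not anticipate any genuine obstacle: the structure is that of a linear ODE with piecewise constant coefficients, for which the transfer matrix is a classical object. The only care needed is the bookkeeping at breakpoints $q^{n/2}$, and this is already handled by the continuity assertion Theorem \ref{thm_6}(3). One minor subtlety is that the local propagator must be read in the correct direction (from $a_0$ backwards to $a_1$), which corresponds to the sign of $\log(a_0/a_1)$ in the exponent; the change of variables $u=\log a$ with the minus sign in \eqref{system_1} makes this unambiguous.
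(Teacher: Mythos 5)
Your proof is correct and takes essentially the same route as the paper's: both reduce to the explicit local propagator with entries $\cos(z\log(a_0/a_1))$, $-m\sin(z\log(a_0/a_1))$, $m^{-1}\sin(z\log(a_0/a_1))$ on each subinterval where $m_q$ is constant, glue the factors at the breakpoints $q^{n/2}$ via the continuity statement of Theorem \ref{thm_6}, and obtain $\det M=1$ factor by factor. The only cosmetic difference is that you derive the local factor as the matrix exponential $\exp\bigl(z\log(a_0/a_1)\,JH_n\bigr)$ using $(JH_n)^2=-I$, whereas the paper sums the iterated-integral (Peano--Baker) series with a convergence estimate and then specializes to piecewise constant $m_q$.
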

\begin{proof} 
(1) Put $A(a,z)=A_{q}(a,z)$, $B(a,z)=B_{q}(a,z)$, $m(a)=m_{q}(a)$, and 
\[
J(a) = 
\begin{bmatrix}
0 & - m(a) \\ m(a)^{-1} & 0 
\end{bmatrix}.
\]
Then the system of \eqref{system_1} in Theorem \ref{thm_6} 
is written as 
\begin{equation} \label{system_2}
-a\frac{\partial}{\partial a}
\begin{bmatrix}
A(a,z) \\ B(a,z)
\end{bmatrix}
= z J(a)
\begin{bmatrix}
A(a,z) \\ B(a,z)
\end{bmatrix} \quad (1 \leq a < a_0 ,\,z \in \C).
\end{equation}
By the assumption, $m(a)$ and $m(a)^{-1}$ are integrable on $[a_1, a_0]$. 
Hence, we have 
\begin{equation} \label{601}
\aligned
\, 
&
\begin{bmatrix}
A(a_1,z) \\ B(a_1,z)
\end{bmatrix} 
= 
\begin{bmatrix}
A(a_0,z) \\ B(a_0,z)
\end{bmatrix} 
+
z \int_{a_1}^{a_0}
J(t_1)
\begin{bmatrix}
A(t_1,z) \\ B(t_1,z)
\end{bmatrix} \frac{dt_1}{t_1} \\
& =
\begin{bmatrix}
A(a_0,z) \\ B(a_0,z)
\end{bmatrix} 
+
z \int_{a_1}^{a_0}
J(t_1) \frac{dt_1}{t_1}
\begin{bmatrix}
A(a_0,z) \\ B(a_0,z)
\end{bmatrix} 
+
z^2  \int_{a_1}^{a_0}\int_{t_1}^{a_0}
J(t_1)J(t_2)
\begin{bmatrix}
A(t_2,z) \\ B(t_2,z)
\end{bmatrix} \frac{dt_2}{t_2}\frac{dt_1}{t_1} \\
& =
\left(
I
+
z \int_{a_1}^{a_0}
J(t_1) \frac{dt_1}{t_1}
+
z^2  \int_{a_1}^{a_0}\int_{t_1}^{a_0}
J(t_1)J(t_2)
\frac{dt_2}{t_2}\frac{dt_1}{t_1} \right. \\
&\qquad \qquad \left.
+
z^3  \int_{a_1}^{a_0}\int_{t_1}^{a_0}\int_{t_2}^{a_0}
J(t_1)J(t_2)J(t_3)
\frac{dt_3}{t_3}\frac{dt_2}{t_2}\frac{dt_1}{t_1}
+
\cdots 
\right)
\begin{bmatrix}
A(a_0,z) \\ B(a_0,z)
\end{bmatrix},
\endaligned
\end{equation}
where $I$ is the $2 \times 2$ identity matrix. 
On the other hand, we have 
\[
J(t_1)\cdots J(t_k)
 = (-1)^{k'}
\begin{cases}
\begin{bmatrix}
0 & - \frac{m(t_1)m(t_3)\cdots m(t_{k})}{m(t_2)m(t_4)\cdots m(t_{k-1})} \\ 
\frac{m(t_2)m(t_4)\cdots m(t_{k-1})}{m(t_1)m(t_3)\cdots m(t_{k})} & 0 
\end{bmatrix} & \text{if $k=2k'+1$}, \\[12pt] 
\begin{bmatrix}
\frac{m(t_1)m(t_3)\cdots m(t_{k-1})}{m(t_2)m(t_4)\cdots m(t_{k})} & 0 \\ 
0 & \frac{m(t_2)m(t_4)\cdots m(t_{k})}{m(t_1)m(t_3)\cdots m(t_{k-1})}
\end{bmatrix} & \text{if $k=2k'$}.
\end{cases}
\]
Therefore, by taking 
\[
C(a_0,a_1):=\sup \{m(a),m(a)^{-1};~ a \in [a_1,a_0]\}
\]
and by using the formula 
\[
\int_{a_1}^{a_0}\int_{t_1}^{a_0}\int_{t_2}^{a_0} \cdots \int_{t_{k-1}}^{a_0} 
1 \,\frac{dt_k}{t_k} \cdots \frac{dt_2}{t_2} \frac{dt_1}{t_1} = \frac{1}{k!}\left(\log \frac{a_0}{a_1}\right)^k, 
\]
we obtain 
\[
\left|
\left[\int_{a_1}^{a_0}\int_{t_1}^{a_0}\int_{t_2}^{a_0} \cdots \int_{t_{k-1}}^{a_0} 
J(t_1)\cdots J(t_k) \, \frac{dt_k}{t_k} \cdots \frac{dt_2}{t_2} \frac{dt_1}{t_1} \right]_{ij} \right| \leq  
\frac{1}{k!}C(a_0,a_1)^k\left(\log \frac{a_0}{a_1}\right)^k,
\]
for every $1 \leq i,j \leq 2$, where $[M]_{ij}$ means the $(i,j)$-entry of a matrix $M$. 
This estimate implies that 
the right-hand side of \eqref{601} 
converges absolutely and uniformly 
if $z$ lie in a bounded region. 

Suppose that $m(a)=m\not=0$ for $a_1 \leq a \leq a_0$. 
Then 
\[
I
+
z \int_{a_1}^{a_0}
J(t_1) 
\frac{dt_1}{t_1}
+
z^2  \int_{a_1}^{a_0}\int_{t_1}^{a_0}
J(t_1)J(t_2)
\frac{dt_2}{t_2} \frac{dt_1}{t_1}
+
z^3  \int_{a_1}^{a_0}\int_{t_1}^{a_0}\int_{t_2}^{a_0}
J(t_1)J(t_2)J(t_3)
\frac{dt_3}{t_3} \frac{dt_2}{t_2} \frac{dt_1}{t_1} 
+
\cdots 
\]
is equal to 
\[
\begin{bmatrix}
\cos(z\log(a_0/a_1)) & - m \sin(z\log(a_0/a_1)) \\ \frac{1}{m} \sin(z\log(a_0/a_1)) & \cos(z\log(a_0/a_1))
\end{bmatrix}
\]
and hence \eqref{prop_601_1} holds by taking this matrix as $M(a_1,a_0;z)$. 
Therefore, 
if we suppose that $m(a)=m_j\not=0$ on $[t_{j+1},t_j)$ 
for a partition $[a_1,a_0]=[a_1,t_{k-1}) \cup \cdots \cup [t_1,a_0]$ 
with $t_0=a_0$ and $t_k=a_1$,   
then we have \eqref{prop_601_1} by taking 
\[
\aligned
M&(a_0,a_1;z) \\
:=&
\begin{bmatrix}
\cos(z\log(t_{k-1}/a_1)) & - m_k \sin(z\log(t_{k-1}/a_1)) \\ \frac{1}{m_k} \sin(z\log(t_{k-1}/a_1)) & \cos(z\log(t_{k-1}/a_1))
\end{bmatrix} \\
& \times 
\begin{bmatrix}
\cos(z\log(t_{k-2}/t_{k-1})) & - m_{k-1} \sin(z\log(t_{k-2}/t_{k-1})) \\ \frac{1}{m_{k-1}} \sin(z\log(t_{k-2}/t_{k-1})) & \cos(z\log(t_{k-2}/t_{k-1}))
\end{bmatrix}
\times \cdots \\
& \times \begin{bmatrix}
\cos(z\log(t_1/t_2)) & - m_2 \sin(z\log(t_1/t_2)) \\ \frac{1}{m_2} \sin(z\log(t_1/t_2)) & \cos(z\log(t_1/t_2))
\end{bmatrix}
\begin{bmatrix}
\cos(z\log(a_0/t_1)) & - m_1 \sin(z\log(a_0/t_1)) \\ \frac{1}{m_1} \sin(z\log(a_0/t_1)) & \cos(z\log(a_0/t_1))
\end{bmatrix}.
\endaligned
\]
Moreover, $\det M(a_1,a_0;z)=1$ is obvious by this definition. 
Now we complete the proof, since $m(a)$ is a constant on $[q^{(n-1)/2},q^{n/2})$ for every $1 \leq n \leq 2g$ by definition \eqref{def_mq}. 
\smallskip

\noindent
(2) By the above definition, $M(a_1,a;z)$ is left-continuous with respect to $a$, 
since $m_q(a)$ is left-continuous by definition \eqref{def_mq}. 
Because $A_q(a,z)$ and $B_q(a,z)$ are left-continuous 
with respect to $a$ by Theorem \ref{thm_6} (4), 
we obtain \eqref{prop_601_2} from \eqref{prop_601_1}. 
\end{proof}

\begin{corollary} \label{cor_602}
Let $m_q(a)$ be of \eqref{def_mq}. 
Assume that $m_{q}(a)\not=0$ and $m_{q}(a)^{-1} \not=0$ for every $ 1 \leq  a < q^g$. 
Then we have
\[
\aligned
\begin{bmatrix}
A_q(a,z) \\ B_q(a,z)
\end{bmatrix}
& = E_q(0)
\begin{bmatrix}
\cos(z\log(q^{n/2}/a)) & -m_{2g-n}\sin(z\log(q^{n/2}/a))\\
m_{2g-n}^{-1}\sin(z\log(q^{n/2}/a)) & \cos(z\log(q^{n/2}/a))
\end{bmatrix} \\
& \quad \times
\prod_{k=1}^{2g-n}
\begin{bmatrix}
\cos(\frac{z}{2}\log q) & -m_{2g-(n+k)}\sin(\frac{z}{2}\log q)\\
m_{2g-(n+k)}^{-1}\sin(\frac{z}{2}\log q) & \cos(\frac{z}{2}\log q)
\end{bmatrix}
\begin{bmatrix}
1 \\ 0
\end{bmatrix}
\endaligned
\]
for $q^{(n-1)/2} \leq a < q^{n/2}$. 
\end{corollary}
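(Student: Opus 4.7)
The plan is to combine the boundary value at $a = q^g$ from Theorem \ref{thm_6}(7) with the explicit transfer matrix formula established in the proof of Proposition \ref{prop_601}(1). First I would apply Proposition \ref{prop_601}(2) with $a_1 = a$ and $a_0 \to (q^g)^-$. Under the hypothesis that $m_q(a)\neq 0$ and $m_q(a)^{-1}\neq 0$ throughout $[1, q^g)$, the assumption of Proposition \ref{prop_601} is satisfied on $[a, a_0]$ for any $a_0 < q^g$. Combining Proposition \ref{prop_601}(2) with Theorem \ref{thm_6}(7), which provides
\[
\lim_{a_0 \to (q^g)^-}\begin{bmatrix}A_q(a_0,z)\\B_q(a_0,z)\end{bmatrix} = E_q(0)\begin{bmatrix}1\\0\end{bmatrix},
\]
we obtain
\[
\begin{bmatrix}A_q(a,z)\\B_q(a,z)\end{bmatrix}
= E_q(0)\,\Bigl(\lim_{a_0\to(q^g)^-} M(a,a_0;z)\Bigr)\begin{bmatrix}1\\0\end{bmatrix}.
\]
It remains to identify this limiting transfer matrix with the product appearing in the statement of the corollary.

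Next I would apply the explicit product formula from the proof of Proposition \ref{prop_601}(1). By definition \eqref{def_mq}, the function $m_q$ is piecewise constant, taking the value $m_{2g-j}$ on $[q^{(j-1)/2}, q^{j/2})$ for $j=1,\ldots,2g$. Hence, for $a_0$ with $q^{(2g-1)/2}<a_0<q^g$, the partition
\[
[a, q^{n/2}) \cup [q^{n/2}, q^{(n+1)/2}) \cup \cdots \cup [q^{(2g-1)/2}, a_0]
\]
of $[a, a_0]$ satisfies the hypothesis of the explicit formula, which then expresses $M(a,a_0;z)$ as a product of $(2g-n+1)$ rotation-type matrices, indexed from left to right by the sub-intervals taken in order of increasing distance from $a$. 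The leftmost factor, coming from $[a, q^{n/2})$, carries the parameter $m_{2g-n}$ and argument $z\log(q^{n/2}/a)$; the $k$-th subsequent factor, coming from $[q^{(n+k-1)/2}, q^{(n+k)/2})$ for $1\leq k\leq 2g-n-1$, carries the parameter $m_{2g-(n+k)}$ and argument $(z/2)\log q$; and the rightmost factor, coming from $[q^{(2g-1)/2}, a_0]$, carries the parameter $m_0$ and argument $z\log(a_0/q^{(2g-1)/2})$.

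Finally, passing to the limit $a_0\to(q^g)^-$ affects only the rightmost factor, converting $\log(a_0/q^{(2g-1)/2})$ into $\tfrac{1}{2}\log q$; this factor then matches the $k=2g-n$ term of the product in the corollary. Multiplying together the $(2g-n+1)$ factors in the prescribed left-to-right order recovers the asserted formula. The only step that requires real care is the bookkeeping: matching the indexing convention of the transfer matrix product (leftmost factor adjacent to $a_1 = a$) to the indexing of $m_{2g-n}, m_{2g-(n+1)}, \ldots, m_0$ as they appear successively in the product on the right-hand side of the statement.
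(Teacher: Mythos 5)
Your proof is correct and follows essentially the same route as the paper: the paper's own (one-line) proof applies the explicit transfer-matrix product from the proof of Proposition \ref{prop_601}(1) with the partition $t_k=q^{(g-k)/2}$ and endpoint $a_0=q^g$, combined with the terminal value from Theorem \ref{thm_6}(7). Your version merely makes explicit the left-sided limit $a_0\to(q^g)^-$ via Proposition \ref{prop_601}(2), which is the careful reading of what the paper does, and your bookkeeping of the factors (leftmost factor on $[a,q^{n/2})$ with parameter $m_{2g-n}$, then $2g-n$ factors of argument $\tfrac{z}{2}\log q$) matches the stated formula.
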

\begin{proof}
By Theorem \ref{thm_6}, we can apply the proof of Proposition \ref{prop_601} (1) to 
$a_1=a$ ($q^{(n-1)/2} \leq a < q^{n/2}$), $a_0=q^g$ and $t_k=q^{(g-k)/2}$ ($1 \leq k \leq g-n$) 
and then we obtain the formula.
\end{proof}

\begin{proposition} \label{lem_602}
Define 
\begin{equation} \label{lem_602_1}
K(a;z,w)
:=\frac{\overline{E_q(a,w)}E_q(a,z)-\overline{E_q^\sharp(a,w)}E_q^\sharp(a,z)}{2\pi i(\bar{w}-z)}. 
\end{equation}
Then we have 
\begin{equation} \label{lem_602_2}
K(a;z,w)
=\frac{\overline{A_q(a,w)}B_q(a,z)-\overline{B_q(a,w)}A_q(a,z)}{\pi(z-\bar{w})}.
\end{equation}
Moreover, if $m_q(a)$ and $m_q(a)^{-1}$ are integrable on $[a_1,a_0]$, then we have 
\begin{equation} \label{lem_602_3}
\aligned
K&(a_1;z,w) - K(a_0;z,w) \\
& \qquad = \frac{1}{\pi}\int_{a_1}^{a_0}\overline{A_q(a,w)}A_q(a,z) \, \frac{1}{m_q(a)} \, \frac{da}{a} 
+ \frac{1}{\pi}\int_{a_1}^{a_0}\overline{B_q(a,w)}B_q(a,z) \, m_q(a) \, \frac{da}{a} 
\endaligned
\end{equation}
for every $z,w \in \C$. 
\end{proposition}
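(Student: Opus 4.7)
The plan is to prove the two identities separately, treating the first as a purely algebraic calculation and the second as a differentiation-plus-integration argument built on the canonical system from Theorem \ref{thm_6}(6).

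For the identity \eqref{lem_602_2}, I would simply substitute $E_q(a,z)=A_q(a,z)-iB_q(a,z)$ and $E_q^\sharp(a,z)=A_q(a,z)+iB_q(a,z)$ into the numerator of $K(a;z,w)$. Expanding both products $\overline{E_q(a,w)}E_q(a,z)$ and $\overline{E_q^\sharp(a,w)}E_q^\sharp(a,z)$ and subtracting, the diagonal pieces $\overline{A_q(a,w)}A_q(a,z)+\overline{B_q(a,w)}B_q(a,z)$ cancel out and the cross terms combine to $2i(\overline{B_q(a,w)}A_q(a,z)-\overline{A_q(a,w)}B_q(a,z))$. Dividing by $2\pi i(\bar w-z)$ and flipping the sign gives \eqref{lem_602_2}. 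This step is just bookkeeping.

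For \eqref{lem_602_3}, my plan is to differentiate the right-hand side of \eqref{lem_602_2} in $a$ and recognize the integrand. On each open interval $(q^{(n-1)/2},q^{n/2})$ where $m_q(a)$ is constant, Theorem \ref{thm_6}(5)--(6) gives $a\partial_a A_q(a,z)=z\,m_q(a)B_q(a,z)$ and $a\partial_a B_q(a,z)=-z\,m_q(a)^{-1}A_q(a,z)$. Since $A_q(a,z)$ and $B_q(a,z)$ are real entire in $z$, we have $\overline{A_q(a,w)}=A_q(a,\bar w)$ and $\overline{B_q(a,w)}=B_q(a,\bar w)$, so the same ODE with parameter $\bar w$ governs the conjugated factors. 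A short computation then yields
\[
a\,\partial_a\!\bigl(\overline{A_q(a,w)}B_q(a,z)-\overline{B_q(a,w)}A_q(a,z)\bigr)
=(\bar w-z)\bigl(m_q(a)^{-1}\overline{A_q(a,w)}A_q(a,z)+m_q(a)\overline{B_q(a,w)}B_q(a,z)\bigr),
\]
so that
\[
a\,\partial_a K(a;z,w)=-\tfrac{1}{\pi}\bigl(m_q(a)^{-1}\overline{A_q(a,w)}A_q(a,z)+m_q(a)\overline{B_q(a,w)}B_q(a,z)\bigr).
\]
Dividing by $a$ and integrating from $a_1$ to $a_0$ over each subinterval produces exactly the two integrals in \eqref{lem_602_3}, provided that the boundary contributions at the breakpoints $a=q^{n/2}$ cancel pairwise.

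The main subtlety (and the only real obstacle) will be the piecewise nature of $m_q(a)$: the ODE of Theorem \ref{thm_6}(6) only holds strictly inside the intervals $(q^{(n-1)/2},q^{n/2})$. I would handle this by partitioning $[a_1,a_0]$ at the breakpoints contained in it, applying the fundamental theorem of calculus on each open piece, and then summing. Continuity of $A_q(a,z)$ and $B_q(a,z)$ across $a=q^{n/2}$ (Theorem \ref{thm_6}(3)) forces $K(a;z,w)$ itself to be continuous in $a$, so the telescoping sum collapses to $K(a_1;z,w)-K(a_0;z,w)$ with no boundary remainders. The integrability assumption on $m_q$ and $m_q^{-1}$ guarantees that the two integrals on the right are absolutely convergent, completing the identity.
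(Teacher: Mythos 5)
Your proposal is correct and follows essentially the same route as the paper: \eqref{lem_602_2} by direct substitution of $E_q=A_q-iB_q$ and $E_q^\sharp=A_q+iB_q$, and \eqref{lem_602_3} by combining the product rule with the system \eqref{system_1} (the paper phrases this as two integration-by-parts identities that are then combined and divided by $z-\bar w$, which is the same computation as your direct differentiation of $\overline{A_q(a,w)}B_q(a,z)-\overline{B_q(a,w)}A_q(a,z)$). Your explicit attention to the breakpoints $a=q^{n/2}$ and the continuity from Theorem \ref{thm_6}(3) is a point the paper leaves implicit, and it is handled correctly.
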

\begin{proof}
We obtain \eqref{lem_602_2} easily  
by substituting \eqref{def_E} and \eqref{def_E_2} into \eqref{lem_602_1}. 
By the integration by parts together with \eqref{system_1}, we obtain 
\[
\aligned
z \int_{a_1}^{a_0}\overline{A(a,w)}A(a,z) \, \frac{1}{m(a)} \, \frac{da}{a}
& = - \left.\overline{A(a,w)}B(a,z)\right|_{a_1}^{a_0} 
+ \bar{w} \int_{a_1}^{a_0}\overline{B(a,w)} B(a,z) \, m(a) \, \frac{da}{a},
\endaligned
\]
\[
\aligned
z \int_{a_1}^{a_0}\overline{B(a,w)}B(a,z) \, m(a) \, \frac{da}{a} 
& = \left.\overline{B(a,w)}A(a,z)\right|_{a_1}^{a_0} 
+ \bar{w} \int_{a_1}^{a_0}\overline{A(a,w)} A(a,z) \, \frac{1}{m(a)} \, \frac{da}{a} .
\endaligned
\]
Moving the second terms of the right-hand sides of the two equations to the left-hand sides, 
then adding both sides of the resulting two equations, 
and finally dividing both sides by $(z-\bar{w})$, 
\[
\aligned
\int_{a_1}^{a_0} & \overline{A(a,w)}A(a,z) \, \frac{1}{m(a)} \, \frac{da}{a}  
+ \int_{a_1}^{a_0}\overline{B(a,w)}B(a,z) \, m(a) \, \frac{da}{a}  \\
&= \frac{\left.(-\overline{A(a,w)}B(a,z)+\overline{B(a,w)}A(a,z))\right|_{a_1}^{a_0}}{z-\bar{w}} 
 = \pi\Bigl( K(a_1;z,w) - K(a_0;z,w) \Bigr).
\endaligned
\]
This implies \eqref{lem_602_3}. 
\end{proof}

%
%
\subsection{Proof of necessity } \label{section_6_2}
%
%

By Lemma \ref{lem_401}, we see that 
all zeros of $P_g(x)$ lie on $T$ and simple
if and only if 
$E_q(z)$ is a function of class HB  . 
Thus 
$P_g(x)$ has a multiple zero or a zero outside $T$
if and only if 
$E_q(z)$ is not a function of class HB. 
Hence, for the necessity, it is sufficient  to prove that 
$E_q(z)$ is not a function of class HB 
if $m_{2g-n}(\underline{c}\,;\log q) \leq 0$ 
or $m_{2g-n}(\underline{c}\,;\log q)^{-1} \leq 0$ 
for some $1 \leq n \leq 2g$. 
We will prove it in three steps as follows, 
there we put $m_{2g-n}=m_{2g-n}(\underline{c}\,;\log q)$. 
\medskip

{\bf Step 1.} Firstly, we note that $m_{2g-1}>0$ by \eqref{m_2g-1}. 
Therefore, if $m_{2g-n}(\underline{c}\,;\log q) \leq 0$ 
or $m_{2g-n}(\underline{c}\,;\log q)^{-1} \leq 0$ 
for some $1 \leq n \leq 2g$, 
then there exists  $1 \leq n_0 \leq 2g-1$ 
such that $m_{2g-n}>0$ and $m_{2g-n}^{-1}>0$ for every $1 \leq n \leq n_0$, 
$m_{2g-(n_0+1)}(\underline{c}\,;\log q) \leq 0$ 
or $m_{2g-(n_0+1)}(\underline{c}\,;\log q)^{-1} \leq 0$,  
$A_q(a,z)$ and $B_q(a,z)$ are defined for $1 \leq a < q^{n_0/2}$ and 
\begin{equation} \label{603}
\begin{bmatrix}
A_q(z) \\ B_q(z)
\end{bmatrix} 
=
\begin{bmatrix}
A_q(1,z) \\ B_q(1,z)
\end{bmatrix} 
= 
M(1,a;z)
\begin{bmatrix}
A_q(a,z) \\ B_q(a,z)
\end{bmatrix} 
\end{equation}
holds for $1 \leq a < q^{n_0/2}$ by applying \eqref{prop_601_1} to $(a_1,a_0)=(1,a)$. 

{\bf Step 2.} Let $n_0$ be the number of Step 1. 
Suppose that $E_q(a,z)=A_q(a,z)-iB_q(a,z)$ is not a function of HB for some $1 < a_0 \leq  q^{n_0/2}$, 
that is, $E_q(a_0,z)$ has a real zero for some $1<a_0 \leq q^{n_0/2}$ 
or $|E_q^\sharp(a_0,z)| \geq |E_q(a_0,z)|$ for some ${\rm Im}\, z>0$ and $1<a_0 \leq q^{n_0/2}$. 
Here, we understand $A_q(a_0,z)$ and $B_q(a_0,z)$ 
in the sense of left-sided limit $a \to a_0^-$ if $a_0=q^{n_0/2}$ (see \eqref{prop_601_2}). 

If $E_q(a,z)$ has a real zero for some $1<a_0 \leq q^{n_0/2}$, 
then $A_q(a_0,z)$ and $B_q(a_0,z)$ have a common real zero, 
since they are real valued on the real line.  
Therefore, \eqref{603} and $\det M(1,a_0;z)=1$ imply that $A_q(z)$ and $B_q(z)$ have a common real zero. 
Hence $E_q(z)$ has a real zero and thus $E_q(z)$ is not a function of class HB. 

On the other hand, we assume that $E_q(a,z)$ has no real zeros for every $1 < a \leq q^{n_0/2}$ 
but it has a zero in the upper half plane for some $1 < a_0 \leq  q^{n_0/2}$. 
By \eqref{def_AB} and Theorem \ref{thm_6} (3), $E_q(a,z)$ is a continuous function of $(a,z)\in [1,q^{n_0/2}]\times \C$. 
Therefore, any zero locus of $E_q(a,z)$ is a continuous curve in $\C$ parametrized by $a\in[1,q^{n_0/2}]$. 
Denote by $z_a \subset \C$ a zero locus through a zero of $E_q(a_0,z)$ in the upper-half plane, 
that is, $E_q(a,z_a)=0$ for every $1 \leq a \leq q^{n_0/2}$. 
If ${\rm Im}(z_{a_1})<0$ for some $1\leq a_1 < a_0$, 
then ${\rm Im}(z_{a_2})=0$ for some $a_1<a_2<a_0$. 
This implies that $E_q(a_2,z)$ has a real zero at $z=z_{a_2}$. This is a contradiction. 
Therefore, ${\rm Im}(z_a) \geq 0$ for every $1 \leq a < a_0$, 
in particular ${\rm Im}(z_1) \geq 0$. 
This implies $E_q(z)=E_q(1,z)$ is not a function of class HB. 

If $E_q(a,z)\not=0$ for ${\rm Im}\, z \geq 0$ 
but $|E_q^\sharp(a_0,z_0)| \geq |E_q(a_0,z_0)|$ for some $1 < a_0 \leq q^{n_0/2}$ and ${\rm Im}(z_0)>0$, 
then it derives a contradiction. 
Because $A_q(a,z)$ and $B_q(a,z)$ are bounded on the real line by definition \eqref{def_AB}, 
$E_q(a,z)$ is a function of the Cartwright class~\cite[the first page of Chapter II]{Levin96}. 
Therefore, we have the factorization
\[
E_q(a_0,z) = C \lim_{R \to \infty} \prod_{|\rho|<R}\left(1-\frac{z}{\rho} \right) 
\]
(see \cite[Remark 2 of Lecture 17.2]{Levin96}). Here ${\rm Im}(\rho)<0$ for every zero of $E_q(a_0,z)$ by the assumption. 
Hence, we have
\[
\left|\frac{E_q^\sharp(a_0,z)}{E_q(a_0,z)}\right| 
= \lim_{R \to \infty} \prod_{|\rho|<R}\left| \frac{z-\bar{\rho}}{z-\rho} \right|<1 \quad \text{for} \quad {\rm Im}\, z>0.
\]
This contradict to the assumption $|E_q^\sharp(a_0,z_0)| \geq |E_q(a_0,z_0)|$. 

{\bf Step 3.} For the number $n_0$ of Step 1, 
one of the following case occurs: 
\begin{enumerate}
\item[(i)] $m_{2g-(n_0+1)} = 0$ or $m_{2g-(n_0+1)}^{-1}=0$,  
\item[(ii)] $m_{2g-(n_0+1)}<0$ and $m_{2g-(n_0+1)}^{-1} \not=0$. 
\end{enumerate}
We prove that $E_q(z)$ is not a function of HB even if whichever occurs. 
Considering the argument in Step 2, 
we assume that $E_q(a,z)$ is a function of class HB for every $1<a\leq q^{n_0/2}$ 
in both cases. 
\smallskip

{\bf Case (i).} 
Suppose that $m_{2g-(n_0+1)}=0$. Then, by definition \eqref{def_m2}, we have 
\[
v_g(n_0)[1]+v_g(n_0)[2g-n_0+1]=0. 
\] 
This implies that the function 
\[
\aligned
\phi(z)=\lim_{a \to (q^{n_0/2})^-} A_q(a,z) 
= \frac{1}{2}(T_{n_0}(q^{n_0/2},z) v_g(n_0))[1] 
\quad ((q^{1/2})^{2g-n_0-2})
\endaligned
\] 
is of exponential type whose mean type is at most $(g-1-(n_0/2))\log q$. 
If 
\[
v_g(n_0)[2g-n_0+2] +v_g(n_0)[4g-2n_0+2] =0 
\] 
for the denominator of \eqref{def_m2}, 
then the function 
\[
\psi(z)=\aligned
\lim_{a \to (q^{n_0/2})^-} B_q(a,z) 
= -\frac{i}{2}(T_{n_0}(q^{n_0/2},z) v_g(n_0))[2] 
\endaligned
\]
is also of exponential type whose mean type is at most $(g-1-(n_0/2))\log q$.
On the other hand, 
we have 
\[
\begin{bmatrix}
A_q(1,z) \\ B_q(1,z)
\end{bmatrix} 
= 
\lim_{a \to (q^{n_0/2})^-}
M(1,a;z)
\begin{bmatrix}
\phi(z) \\ \psi(z)
\end{bmatrix}
\]
by \eqref{prop_601_2}. 
Here, the entries of the left-hand side are entire functions of exponential type 
with mean type $g\log q$, 
while entries of the right-hand side are entire functions of exponential type 
with mean type at most $(g-1)\log q$ from the construction of $M(1,a;z)$ in the proof of Proposition \ref{prop_601} (1). 
This is a contradiction. Hence, it must be
\[
v_g(n_0)[2g-n_0+2] +v_g(n_0)[4g-2n_0+2] \not=0
\] 
and $\psi(z)$ is a function of exponential type whose mean type is just $(g-(n_0/2))\log q$. 

By the assumption, $E_q(a,z)$ is a function of HB at $a=q^{n_0/2}$ (in the sense of left-sided limit). 
Therefore, $\phi(z)$ and $\psi(z)$ have only real zeros 
and their zeros interlace. 
However, by \cite[Theorem 1 of Lecture 17.2]{Levin96}, 
main terms of asymptotic formulas for the number of (real) zeros of $\phi(z)$ and $\psi(z)$ 
in $[-T,T] \subset \R$ are strictly different. 
In particular, their (real) zeros can not interlace. 
This is a contradiction. 
Hence $E_q(a,z)$ is not a function of HB for some $1<a \leq q^{n_0/2}$ 
which implies that $E_q(z)$ is not a function of HB by Step 2. 
The case of $m_{2g-n_0}^{-1}=0$ is proved by a similar way. 
\smallskip

{\bf Case (ii).} 
In this case, we can assume that $E_q(a,z)$ is a function of class HB for every $1< a \leq q^{(n_0+1)/2}$, 
since we only used $m_{2g-n}\not=0$ and $m_{2g-n}^{-1}\not=0$ in Step 2. 
Put $a_1=q^{n_0/2}$, $a_0=(q^{(n_0+1)/2}-q^{n_0/2})/2$ and $m_{2g-(n_0+1)}=-m<0$. 
Then, for every $a_1 \leq a \leq a_0$, 
we have $m_q(a)=-m$ by \eqref{def_mq} and find that  
$E_q(a,z)$ generates the de Branges space $B(E_q(a,z))$ 
which is the Hilbert space of all entire functions $F(z)$ such that 
$\int_{\R}|F(x)/E_q(a,x)|^2 dx<\infty$ 
and $F(z)/E_q(a,z)$, $F(z)/E_q^\sharp(a,z)$ are 
functions of the Hardy space $H^2$ in the upper half-plane 
(see \cite[\S19]{deBranges68} and \cite[Proposition 2.1]{Remling02}).

We have $K(a_0;z,z) > K(a_1;z,z)$ by applying \eqref{lem_602_3} to $z=w$ with $m_q(a)=-m<0$. 
Therefore, it follows that for every $f \in B(E_q(a_1,z))$
\[
|f(z)|^2 \leq \Vert f \Vert_{a_1}^2K(a_1;z,z) < \Vert f \Vert_{a_1}^2K(a_0;z,z)
\]
by \cite[Theorem 20]{deBranges68}, 
where $\Vert \cdot \Vert_{a_1}$ is the norm of $B(E_q(a_1,z))$. 
By applying this to the function 
\[
g(z):=\frac{E_q(a_1,z)-E_q(a_1,iy_0)}{z-iy_0} \quad (y_0 \in \R)
\] 
which is a function of $B(E_q(a_1,z))$ 
by Lemma 3.3 and Lemma 3.4 of \cite{Dym70}, 
we obtain  
\[
\aligned
|g(iy)|^2
\leq \Vert g \Vert_{a_1}^2 K(t_0,iy,iy) =\Vert g \Vert_{a_1}^2
\frac{|E_q(a_0,iy)|^2-|E_q^\sharp(a_0,z)|^2}{4\pi y}
\leq \Vert g \Vert_{a_1}^2
\frac{|E_q(a_0,iy)|^2}{4\pi y}. 
\endaligned
\]
By $E_q(a,z)=A_q(a,z)-iB_q(a,z)$ with \eqref{def_AB}, we see that 
\[
y^{-1}q^{(g-\frac{n_0-1}{2})y} \ll |g(iy)| \ll 
y^{-1/2}|E_q(a_0,iy)| \ll y^{-1/2} q^{(g-\frac{n_0}{2})y} 
\quad \text{as $y \to +\infty$}. 
\]
This is a contradiction. 
Hence $E_q(a,z)$ is not a function of HB for some $1<a \leq a_0$ 
and it implies that $E_q(a,z)$ is not a function of HB by Step 2. 
\hfill $\Box$
%
%
\subsection{Proof of sufficiency} \label{section_6_3}
%
%
Suppose that  $m_{2g-n}(\underline{c}\,;\log q) > 0$ 
and $m_{2g-n}(\underline{c}\,;\log q)^{-1} > 0$
for every $1 \leq n \leq 2g$. 
Then $m_q(a)$ and $m_q(a)^{-1}$ are both integrable on $[1,q^g)$ and positive real valued. 
Therefore, by applying \eqref{lem_602_3} to $(a_1,a_0)=(a,b)$ 
and then by tending $b$ to $q^g$ together with Theorem \ref{thm_6} (7) and \eqref{lem_602_1}, we have  
\[
\aligned
0 & 
< \frac{1}{\pi}\int_{a}^{q^g} |A_q(t,z)|^2 \, \frac{1}{m_q(t)} \, \frac{dt}{t} 
+ \frac{1}{\pi}\int_{a}^{q^g} |B_q(t,z)|^2 \, m_q(t) \, \frac{dt}{t} \\
& \quad = K(a;z,z) - \lim_{b \to q^g} K(b;z,z) = \frac{|E_q(a,z)|^2-|E_q^\sharp(a,z)|^2}{4\pi {\rm Im}\, z}
\endaligned
\]
for every $1 \leq a < q^g$ if ${\rm Im}\, z>0$. 
Thus $E_q(a,z)$ is a function of class $\overline{\rm HB}$ for every $1 \leq a < q^g$. 
In addition, we have 
\[
\begin{bmatrix}
A_q(a,z) \\ B_q(a,z)
\end{bmatrix} 
= \left( \lim_{b \to (q^g)^-}
M(a,b;z) \right)
\begin{bmatrix}
E_q(0) \\ 0
\end{bmatrix} \quad \text{with} \quad \lim_{b \to (q^g)^-}\det M(a,b;z)=1
\]
for every $1 \leq a < q^g$. 
Here $E_q(0)=A_q(0)$ and   
\[
0 \not= m_0(\underline{c}\,;\log q)=\frac{v_g(2g)[1]}{v_g(2g)[2]}=\frac{A_q(0)}{v_g(2g)[2]}
\] 
by the proof of Theorem \ref{thm_6} (7). If $A_q(0)=v_g(2g)[1]=v_g(2g)[2]=0$, 
then $A_q(a,z)$ and $B_q(a,z)$ are both identically zero for $q^{(2g-1)/2} \leq a < q^g$ by \eqref{def_AB}. 
It implies that $E_q(z)=A_q(z)=B_q(z) \equiv 0$ by Proposition \ref{prop_601} together with the assumption. 
Therefore, $A_q(0)\not=0$ and it implies that $A_q(a,z)$ and $B_q(a,z)$ have no common zeros 
for every $1 \leq a <q^g$. 
Thus $E_q(a,z)$ has no real zeros. 
As a consequence $E_q(a,z)$ is a function of class $\rm HB$ for every $1 \leq a < q^g$. 
Hence $A_q(z)$ has only simple real zeros by $E_q(z)=E_q(1,z)$ and Lemma \ref{lem_401} (2). 
\hfill $\Box$
%
%
\section{Proof of Theorem \ref{thm_3}, \ref{thm_5}, \ref{thm_7} and Corollary \ref{cor_2}} \label{section_7}
%
%

Firstly, we prove Theorem \ref{thm_3} 
by using Theorem \ref{thm_7} as well as the proof of Theorem \ref{thm_1}. 
However, we omit the proof of Theorem \ref{thm_7}, 
because it is proved by a way similar to the proof of 
Theorem \ref{thm_6} in Section \ref{section_5}. 
Also, we omit the proof of Theorem \ref{thm_4}, because it is just a rewriting of Theorem \ref{thm_3} 
by \eqref{def_m3_2} and \eqref{def_R2}.
Successively, we prove Corollary \ref{cor_2} by using Theorem \ref{thm_3} and Theorem \ref{thm_7} 
as well as Corollary \ref{cor_1}. 
Finally, we prove Theorem \ref{thm_7}. 
%
%
\subsection{Proof of Theorem \ref{thm_3} }
%
%
We start from the following lemma. 

\begin{lemma} \label{lem_2_1}
Let $E_{q,\omega}(z)$ be in \eqref{def_E2}. 
Then, all zeros of $P_g(x)$ lie on $T$ 
if and only if 
$E_{q,\omega}(z)$ is a function of class ${\rm HB}$ 
for every $\omega>0$. 
\end{lemma}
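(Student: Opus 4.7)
My plan is to reduce the lemma entirely to Lemma \ref{lem_401} and the relations \eqref{def_E2}, \eqref{def_E2_3}, which give $E_{q,\omega}(z) = A_q(z + i\omega)$ and $E_{q,\omega}^\sharp(z) = A_q(z - i\omega)$. The key observation is that checking the HB property of $E_{q,\omega}$ amounts to checking where the zeros of $A_q$ sit relative to the horizontal line ${\rm Im}\, z = \omega$, and letting $\omega$ range over all positive reals sweeps out the entire open upper half-plane.

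For the necessity direction, I would assume all zeros of $P_g(x)$ lie on $T$, so that $A_q(z)$ has only real zeros. Since $A_q$ is a real even entire function of exponential type, this gives a Hadamard-type factorization of the same shape as \eqref{H-factorization}. Then, for $\omega > 0$, each zero $\rho \in \R$ of $A_q$ produces a zero $\rho - i\omega$ of $E_{q,\omega}$ strictly in the lower half-plane, so $E_{q,\omega}$ has no real zeros. For the strict inequality \eqref{HB}, a direct factor-by-factor estimate using
\[
|z + i\omega - \rho|^{2} - |z - i\omega - \rho|^{2} = 4\omega\, {\rm Im}\, z > 0 \qquad ({\rm Im}\, z > 0,\ \rho \in \R)
\]
gives $|E_{q,\omega}^\sharp(z)|/|E_{q,\omega}(z)| < 1$ on the upper half-plane, so $E_{q,\omega}$ belongs to HB.

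For the sufficiency direction, I would assume $E_{q,\omega}$ is of class HB for every $\omega > 0$. Since an HB function has no zeros in the closed upper half-plane (the strict inequality \eqref{HB} prevents zeros in the open upper half-plane, and the definition further excludes real zeros), the identity $E_{q,\omega}(z) = A_q(z+i\omega)$ shows that $A_q$ has no zeros on the closed half-plane ${\rm Im}\, w \geq \omega$. Taking the union over all $\omega > 0$ rules out any zero of $A_q$ with strictly positive imaginary part. Applying the functional equation $A_q(z) = A_q(-z)$ from Section \ref{section_4_1} then rules out zeros with strictly negative imaginary part as well, so every zero of $A_q$ is real; equivalently, all zeros of $P_g(x)$ lie on $T$.

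There is no real obstacle: the forward direction is a standard calculation on a Hadamard product, and the backward direction is a one-line sweep argument plus the $z \mapsto -z$ symmetry of $A_q$. The only minor point requiring care is that we must invoke the HB hypothesis for all $\omega > 0$ simultaneously in the sufficiency step, since a single $\omega$ would only clear zeros above one horizontal line; this is exactly why the statement quantifies over $\omega > 0$.
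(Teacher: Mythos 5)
Your proof is correct. The necessity half follows the paper's own route exactly: both you and the author use the factorization \eqref{H-factorization} of $A_q$ (valid since $A_q$ is real, even, and of exponential type with only real zeros) and the factor-by-factor estimate
\[
\left|\frac{z-i\omega-\rho}{z+i\omega-\rho}\right|^2 = 1-\frac{4\omega\,{\rm Im}\,z}{(x-\rho)^2+({\rm Im}\,z+\omega)^2}<1 \qquad (\rho\in\R,\ {\rm Im}\,z>0),
\]
together with the observation that every zero of $E_{q,\omega}$ is of the form $\rho-i\omega$ and hence non-real. The sufficiency half is where you genuinely diverge. The paper invokes Proposition \ref{lem_303} to conclude that $A_{q,\omega}$ has only real zeros for every $\omega>0$, and then passes to the limit $A_q=\lim_{\omega\to 0^+}A_{q,\omega}$ via Hurwitz's theorem. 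You instead use only the weakest consequence of the HB hypothesis --- that $E_{q,\omega}=A_q(\cdot+i\omega)$ is zero-free on the closed upper half-plane --- and sweep $\omega$ over $(0,\infty)$ to clear the entire open upper half-plane of zeros of $A_q$, finishing with the symmetry $A_q(z)=A_q(-z)$. Your argument is more elementary: it avoids both the interlacing machinery of Proposition \ref{lem_303} and the limiting/Hurwitz step, and it makes transparent exactly why the hypothesis must be quantified over all $\omega>0$ (a single $\omega$ only excludes zeros above one horizontal line). The paper's route, by contrast, extracts more structure (interlacing of the zeros of $A_{q,\omega}$ and $B_{q,\omega}$) that is reused elsewhere, but for the bare statement of the lemma your shortcut is perfectly adequate and arguably cleaner.
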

\begin{proof}
By definition \eqref{def_A}, 
all zeros of $P_g(x)$ lie on $T$ 
if and only if $A_q(z)$ has only real zeros. 
Suppose that $A_q(z)$ has only real zeros (allowing multiple zeros). 
Then $E_{q,\omega}(z)$ satisfies inequality \eqref{HB} for every $\omega>0$, 
because we have
\[
\aligned
\left| \frac{\overline{E_{q,\omega}(\bar{z})}}{E_{q,\omega}(z)} \right|^2 
& = 
\left| \frac{A_q(z-i\omega)}{A_q(z+i\omega)} \right|^2 = 
\prod_{\rho} \left| \frac{(x-\rho)+i(y-\omega)}{(x-\rho)+i(y+\omega)} \right|^2 \\
& = 
\prod_{\rho} \left( 1 - \frac{4 \omega y}{(x - \rho)^2 + (y+\omega)^2} \right) <1
\endaligned
\]
for $z=x+iy$ with $y>0$ by using the factorization \eqref{H-factorization}. 
Moreover, $E_{q,\omega}(z)$ has no real zeros for every $\omega>0$ 
by definition \eqref{def_E2} and the assumption. 
Hence $E_{q,\omega}(z)$ is a function of class ${\rm HB}$ for every $\omega>0$. 

Conversely, suppose that $E_{q,\omega}(z)$ is a function of class ${\rm HB}$ for every $\omega>0$. 
Then all zeros of $A_{q,\omega}(z)$ and $B_{q,\omega}(z)$ are real, simple, 
and they interlace by Proposition \ref{lem_303}. 
In particular, $A_{q,\omega}(z)$ has only real zeros for every $\omega>0$. 
Hence $A_q(z)=\lim_{\omega \to 0^+}A_{q,\omega}(z)$ has only real zeros 
by Hurwitz's theorem in complex analysis (\cite[Th. (1,5)]{Marden66}). 
\end{proof}

\noindent
{\bf Proof of necessity.} 
By Lemma \ref{lem_2_1}, $P_g(x)$ has a zero outside $T$
if and only if 
$E_{q,\omega}(z)$ is not a function of class HB 
for some $\omega>0$.  
Hence it is sufficient to prove that 
if there exists $\omega_0>0$ 
such that  $m_{2g-n}(\underline{c}\,; q^{\omega_0}) \leq 0$ 
or $m_{2g-n}(\underline{c}\,; q^{\omega_0})^{-1} \leq 0$ 
for some $1 \leq n \leq 2g$, 
then $E_{q,\omega_0}(z)$ is not a function of class HB. 
This is proved by a way similar to the proof of Section \ref{section_6_2} 
by using Theorem \ref{thm_7} instead of Theorem \ref{thm_6}.  
\hfill $\Box$
\medskip

\noindent
{\bf Proof of sufficiency.} 
Let $\omega>0$. Suppose that $m_{2g-n}(\underline{c}\,;q^\omega) > 0$ 
and $m_{2g-n}(\underline{c}\,;q^\omega)^{-1} > 0$
for every $1 \leq n \leq 2g$.  
Then it is proved that $E_{q,\omega}(z)$ is a function of class HB 
by a way similar to the proof of Section \ref{section_6_3}  
by using Theorem \ref{thm_7} instead of Theorem \ref{thm_6}. 
Therefore, by Lemma \ref{lem_2_1}, 
if $m_{2g-n}(\underline{c}\,;q^\omega) > 0$ 
and $m_{2g-n}(\underline{c}\,;q^\omega)^{-1} > 0$
for every $1 \leq n \leq 2g$ and $\omega>0$, 
then all zeros of $P_g(x)$ lie on $T$.     
\hfill $\Box$
%
%
\subsection{Proof of Corollary \ref{cor_2}}
%
%
If all zeros of $P_g(x)$ lie on $T$, 
then $A_{q,\omega}(z)$ has only simple real zeros 
for every $\omega>0$ by Proposition \ref{lem_303} and Lemma \ref{lem_2_1}. 
In particular, $E_{q,\omega}(0)=A_{q,\omega}(0)\not=0$, since $A_{q,\omega}(z)$ is an even 
function having only simple real zeros. 
Hence,  
$H_{q,\omega}(a)$ defines a canonical system on $[1,q^g)$ 
having the solution $(A_{q,\omega}(a,z)/E_{q,\omega}(0)$, $B_{q,\omega}(a,z)/E_{q,\omega}(0))$ 
by Theorem \ref{thm_3} and Theorem \ref{thm_7}. 
Conversely, if $H_{q,\omega}(a)$ defines a canonical system for every $\omega>0$, 
then $m_{2g-n}(\underline{c}\,;q^\omega)>0$ 
and $m_{2g-n}(\underline{c}\,;q^\omega)^{-1}>0$ for every $1 \leq n \leq 2g$ and $\omega>0$
by Definition \ref{def_301}.   
Hence, all zeros of $P_g(x)$ lie on $T$ by Theorem \ref{thm_3}. 
%
%
\subsection{Proof of Theorem \ref{thm_5}} \label{section_7_3}
%
%
We define $\tilde{m}_{2g-n}(\underline{c}\,;q^\omega)$ for $1 \leq n \leq 2g$
by \eqref{def_m1} and \eqref{def_v1} starting from the initial vector 
$\displaystyle{\tilde{v}_g(0) =
\begin{bmatrix}
{\mathbf a}_{g,\omega}(0) \\
\omega^{-1}{\mathbf a}_{g,\omega}(0)
\end{bmatrix}}$, 
where ${\mathbf a}_{g,\omega}(0)$ is the vector of \eqref{def_v0_2}. 
In addition, we take 
$\displaystyle{\tilde{m}_{2g}(\underline{c}\,;q^\omega)
:=\omega \frac{q^{g\omega}+q^{-g\omega}}{q^{g\omega}-q^{-g\omega}}}$.  
Then we have
\[
\tilde{m}_{2g-n}(\underline{c}\,;q^\omega)=\omega\cdot m_{2g-n}(\underline{c}\,;q^\omega),
\]
for every $1 \leq n \leq 2g$, 
where $m_{2g-n}(\underline{c}\,;q^\omega)$ is of \eqref{def_m3}. 
This implies  
\[
R_n(\underline{c}\,;q^\omega)=\tilde{R}_n(\underline{c}\,;q^\omega)
\]
for every $1 \leq n \leq 2g$  
if we define $\tilde{R}_n(\underline{c}\,;q^\omega)$ by 
\[
\tilde{R}_n(\underline{c}\,;q^\omega) = 
\begin{cases}
\displaystyle{\prod_{j=0}^{J} \frac{\tilde{m}_{2g-(2j+1)}(\underline{c}\,;q^\omega)}{\tilde{m}_{2g-2j}(\underline{c}\,;q^\omega)}} & \text{if $n=2J+1 \geq 1$}, \\
\displaystyle{\prod_{j=0}^{J} \frac{\tilde{m}_{2g-(2j+2)}(\underline{c}\,;q^\omega)}{\tilde{m}_{2g-(2j+1)}(\underline{c}\,;q^\omega)}} & \text{if $n=2J+2 \geq 2$}.
\end{cases}
\]
Therefore, we obtain
\[
\aligned
m_{2g-n}(\underline{c}\,;q^\omega)
& = \frac{q^{g\omega}+q^{-g\omega}}{q^{g\omega}-q^{-g\omega}}\,\tilde{R}_{n-1}(\underline{c}\,;q^\omega)\tilde{R}_n(\underline{c}\,;q^\omega) \\
& = \frac{1}{g} \left(\frac{1}{\omega \log q}+O(\log q^\omega)\right)\tilde{R}_{n-1}(\underline{c}\,;q^\omega)\tilde{R}_n(\underline{c}\,;q^\omega) 
\quad (q^\omega \to 1^+).
\endaligned
\]
Hence, for Theorem \ref{thm_5}, it is sufficient to prove that 
\[
\lim_{q^\omega \to 1^+}\tilde{R}_n(\underline{c}\,;q^\omega)=R_n(\underline{c}) \quad (1 \leq n \leq 2g). 
\]
By the definitions of $\tilde{m}_{2g-n}(\underline{c}\,;q^\omega)$ and $m_{2g-n}(\underline{c}\,;\log q)$, 
this equality follows from the following formula of $v_g(1)$: 
\[
\lim_{q^\omega \to 1^+}
P_{2g-1}(m_{2g-1})^{-1}Q_{2g-1}
\begin{bmatrix}
{\mathbf a}_{g,\omega}(0) \\
\omega^{-1}{\mathbf a}_{g,\omega}(0)
\end{bmatrix}
=
P_{2g-1}(m_{2g-1})^{-1}Q_{2g-1}
\begin{bmatrix}
{\mathbf a}_{g}(0) \\
{\mathbf b}_{g}(0)
\end{bmatrix}
=
\begin{bmatrix}
{\mathbf a}_g(1) \\
{\mathbf b}_g(1)
\end{bmatrix},
\]
where the right-hand side is the vector of \eqref{vec_v1}. 
Put
\[
\begin{bmatrix}
\tilde{{\mathbf a}}_{g,\omega}(1) \\
\tilde{{\mathbf b}}_{g,\omega}(1)
\end{bmatrix}
:=
P_{2g-1}(\tilde{m}_{2g-n}(\underline{c}\,;q^\omega))^{-1}Q_{2g-1}
\begin{bmatrix}
{\mathbf a}_{g,\omega}(0) \\
\omega^{-1}{\mathbf a}_{g,\omega}(0)
\end{bmatrix}.
\]
Then, by using the formula of $P_{k}(m_{k})^{-1}Q_{k}$ in the proof of Lemma \ref{lem_203}, 
we have 
\[
\aligned
\tilde{a}_{g,\omega}(1)
& =\begin{bmatrix}
2\cosh(g\log q^\omega)c_0 \\
(\cosh((g-1)\log q^\omega)+\omega^{-1} m_{2g-1}\sinh((g-1)\log q^\omega))c_1 \\
(\cosh((g-2)\log q^\omega)+\omega^{-1} m_{2g-1}\sinh((g-2)\log q^\omega))c_2 \\
\vdots \\
(\cosh(\log q^\omega)+\omega^{-1} m_{2g-1}\sinh(\log q^\omega))c_{g-1} \\
c_g \\
(\cosh(\log q^\omega)-\omega^{-1} m_{2g-1}\sinh(\log q^\omega))c_{g-1} \\
\vdots \\
(\cosh((g-2)\log q^\omega)-\omega^{-1} m_{2g-1}\sinh((g-2)\log q^\omega))c_2 \\
(\cosh((g-1)\log q^\omega)-\omega^{-1} m_{2g-1}\sinh((g-1)\log q^\omega))c_1 
\end{bmatrix} \\
\tilde{b}_{g,\omega}(1)
& =
\begin{bmatrix}
2 \, \omega^{-1}\sinh(g\log q^\omega)c_0 \\
m_{2g-1}^{-1}(\cosh((g-1)\log q^\omega)+\omega^{-1} m_{2g-1}\sinh((g-1)\log q^\omega))c_1 \\
m_{2g-1}^{-1}(\cosh((g-2)\log q^\omega)+\omega^{-1} m_{2g-1}\sinh((g-2)\log q^\omega))c_2 \\
\vdots \\
m_{2g-1}^{-1}(\cosh(\log q^\omega)+\omega^{-1} m_{2g-1}\sinh(\log q^\omega))c_{g-1} \\
m_{2g-1}^{-1}c_g \\
m_{2g-1}^{-1}(\cosh(\log q^\omega)-\omega^{-1} m_{2g-1}\sinh(\log q^\omega))c_{g-1} \\
\vdots \\
m_{2g-1}^{-1}(\cosh((g-2)\log q^\omega)-\omega^{-1} m_{2g-1}\sinh((g-2)\log q^\omega))c_2 \\
m_{2g-1}^{-1}(\cosh((g-1)\log q^\omega)-\omega^{-1} m_{2g-1}\sinh((g-1)\log q^\omega))c_1 
\end{bmatrix}
\endaligned
\]
with
\[
m_{2g-1}=\omega \frac{q^{g\omega}+q^{-g\omega}}{q^{g\omega}-q^{-g\omega}} = \omega \coth(g \log q^\omega). 
\]
By using 
\[
\aligned
\lim_{x \to 0^+}\Bigl[ \cosh((g-k)x)+\coth(gx)\sinh((g-k)x) \Bigr] &= \frac{2g-k}{g} \quad (0 \leq k \leq g), \\
\lim_{x \to 0^+}\Bigl[ \cosh((g-k)x)-\coth(gx)\sinh((g-k)x) \Bigr] &= \frac{k}{g} \quad (1 \leq k \leq g-1), \\
\lim_{x \to 0^+} m_{2g-1}^{-1}=\lim_{x \to 0^+} \frac{\log q}{x} \tanh(gx) = g \log q
\endaligned 
\]
for $x=\log q^\omega$ 
we obtain
\[
\lim_{q^\omega \to 1^+}
\begin{bmatrix}
\tilde{{\mathbf a}}_{g,\omega}(1) \\
\tilde{{\mathbf b}}_{g,\omega}(1)
\end{bmatrix}
=
\begin{bmatrix}
{\mathbf a}_g(1) \\
{\mathbf b}_g(1)
\end{bmatrix}.
\]
On the other hand, it is easy to see that 
\[
\lim_{q^\omega \to 1^+} \tilde{m}_{2g}(\underline{c}\,;q^\omega) = \frac{1}{g \log q} = m_{2g}(\underline{c},\,\log q).
\]
Hence $\lim_{q^{\omega} \to 1^+} R_{n}(\underline{c}\,;q^{\omega})=R_{n}(\underline{c})$ for every $1 \leq n \leq 2g$. 
Because $R_{n}(\underline{c}\,;q^{\omega})$ are rational function of $q^\omega$, 
we obtain the second formula of Theorem \ref{thm_5}. Therefore, we obtain 
\[
\aligned
m_{2g-n}&(\underline{c}\,;q^\omega) =m_{2g}(\underline{c}\,;q^\omega)R_{n-1}(\underline{c}\,;q^\omega)R_n(\underline{c}\,;q^\omega) \\ 
& = \left( \frac{1}{\omega g \log q} + O(\log q^\omega) \right)
\Bigl( R_{n-1}(\underline{c}) +O(\log q^\omega) \Bigr) \Bigl( R_n(\underline{c}) + O(\log q^\omega) \Bigr).  
\endaligned
\]
This implies the third formula of Theorem \ref{thm_5}. \hfill $\Box$
%
%
\subsection{Remark on Theorem \ref{thm_5}} 
%
%
We have 
\[
E_{q,\omega}(z) = A_q(z) - i \omega B_q(z) + O_z(\omega^2), 
\]
\[
A_{q,\omega}(z) = A_q(z) + O_z(\omega^2), \quad 
B_{q,\omega}(z) = \omega B_q(z) +O_z(\omega^3)
\]
as $\omega \to 0^+$ if $z$ lie in a compact subset of $\C$. 
Therefore, it seems that $E_{q,\omega}(z)$ is similar to $E_q(z)=A_q(z) - i B_q(z)$ for small $\omega>0$, 
but there is an obvious gap after taking the limit $\omega \to 0^+$. 
To resolve this  gap, we consider 
\[
\tilde{E}_{q,\omega}(z) 
:= A_{q,\omega}(z) - \frac{i}{\omega} \, B_{q,\omega}(z). \\
\]
Then, we have
\[
\aligned
\tilde{E}_{q,\omega}(z) 
&= A_q(z) - i\,B_q(z) + O_z(\omega^2) 
= E_q(z) + O_z(\omega^2),
\endaligned
\]
\[
\tilde{A}_{q,\omega}(z) 
:= \frac{1}{2}(\tilde{E}_{q,\omega}(z)+\tilde{E}_{q,\omega}^\sharp(z)) 
= A_{q,\omega}(z) 
= A_q(z) + O_z(\omega^2),
\]
\[
\tilde{B}_{q,\omega}(z)
:= \frac{i}{2}(\tilde{E}_{q,\omega}(z)-\tilde{E}_{q,\omega}^\sharp(z))
 = \frac{1}{\omega}B_{q,\omega}(z) 
 = B_q(z) +O_z(\omega^2)
\]
as $\omega \to 0^+$ if $z$ lie in a compact subset in $\C$. 
Hence $\tilde{E}_{q,\omega}(z)$ ``recovers'' $E_q(z)$ by taking the limit $\omega \to 0^+$. 
The initial vector $\tilde{v}_g(0)$ of Section \ref{section_7_3} is chosen 
so that it corresponds to $\tilde{A}_{q,\omega}(z)$ and $\tilde{B}_{q,\omega}(z)$. 
This is a reason why Theorem \ref{thm_5} holds. 
In spite of this advantage, we chose $E_{q,\omega}(z)$ not $\tilde{E}_{q,\omega}(z)$ 
to state results in Section \ref{section_2_3}.  
One of the reason is the simple formula $E_{q,\omega}(z)=A_q(z+i\omega)$. 
Comparing this, $\tilde{E}_{q,\omega}(z)$ has a slight complicated formula 
\[
\tilde{E}_{q,\omega}(z) 
= \frac{A_{q}(z+i\omega)+A_{q}(z-i\omega)}{2} + \frac{A_{q}(z+i\omega)-A_{q}(z-i\omega)}{2\omega}.
\]
We do not know whether an analogue of Lemma \ref{lem_2_1} holds for $\tilde{E}_{q,\omega}(z)$. 
However, if such analogue holds, 
we may obtain results for $\tilde{E}_{q,\omega}(z)$ analogous to results in Section \ref{section_2_3}. 

%
%
\section{Concluding Remarks} \label{section_8}
%
%
By Corollary \ref{cor_602} and the proof of Section \ref{section_6_3}, 
a self-reciprocal polynomial $P_g(x)$ of \eqref{def_Pg} has 
only simple zeros on the unit circle if and only if 
there exists $2g$ positive real numbers $m_0,\cdots,m_{2g-1}$ such that 
\[
P_g(x) 
= \frac{P_g(1)}{2^{2g}} 
[\,1\,~\,0\,]
\prod_{n=1}^{2g}
\begin{bmatrix}
(x+1) & im_{2g-n}(x-1)\\
-im_{2g-n}^{-1}(x-1) & (x+1)
\end{bmatrix}
\begin{bmatrix}
1 \\ 0
\end{bmatrix},
\]
since $P_g(1)=E_q(0)=A_q(0)$. 
Compare this with the factorization  
\[
P_g(x) 
= P_g(0) \prod_{j=1}^{g}(x^2-2\lambda_j x+1) \quad (\lambda_j \in \R).
\]
As in Section \ref{section_2}, we have a simple algorithm to calculate $m_0,\cdots,m_{2g-1}$ from coefficients $c_0,\cdots,c_g$, 
but it is not for $\lambda_1,\cdots,\lambda_{g}$, since the Galois groups of a general self-reciprocal polynomial $P_g(x)$  
is isomorphic to ${\frak S}_g \ltimes (\Z/2\Z)^g$. 
In addition, it is understood that the positivity of $m_0,\cdots,m_{2g-1}$ is equivalent to the positivity of a Hamiltonian, 
but a plausible meaning of $|\lambda_j|<1$ and $\lambda_i \not= \lambda_j$ ($i \not= j$) is not clear. 
\medskip

Before concluding this paper, 
we remark two important remaining problems. 
\medskip 

(1) As mentioned in the end of Section \ref{section_2_2}, 
general closed formula of $m_{2g-n}(\underline{c},\;\log q)$ or $R_n(\underline{c})$  
is not yet obtained even for conjectural one. 
Therefore, the discovery of such formula is desirable. 
In fact, we need a simple closed formula of $m_{2g-n}(\underline{c},\;\log q)$ or $R_n(\underline{c})$ 
for the convenience of actual applications of Theorem \ref{thm_1} or \ref{thm_2}. 
The following formula of $R_n(\underline{c})$ 
may suggests the existence of certain ``nice'' general formula of $R_n(\underline{c})$, 
although an expected result is not yet clear. We have 
\[
P_g(x)= \sum_{k=0}^{g-1}c_k(x^{2g-k}+x^{k}) + c_g x^g
= c_0 \prod_{j=1}^{g}(x^2-2\lambda_j x+1), 
\]
where $2\lambda_j$ ($1 \leq j \leq g$) are 
zeros of the Chebyshev transform of $P_g(x)$ (\cite{Lakatos02}). 
By using $\lambda_1,\cdots,\lambda_g$, we obtain the following formula of $R_n(\underline{c})$: \\
$\bullet$ $g=1$, $\displaystyle{
R_{2}(c_0,c_1) = \frac{1 - \lambda_1}{1 + \lambda_1}}$, \\
$\bullet$ $g=2$, $R_{n}=R_{n}(c_0,c_1,c_2)$ ($2 \leq n \leq 4$), 
\[
R_{2} = \frac{(1 - \lambda_1)+(1 - \lambda_2)}{(1 + \lambda_1)+(1 + \lambda_2)}, \quad  
R_{3} = 2\frac{(1 - \lambda_1^2)+(1 - \lambda_2^2)}{(\lambda_1-\lambda_2)^2}, \quad 
R_{4} = \frac{(1 - \lambda_1)(1 - \lambda_2)}{(1 + \lambda_1)(1 + \lambda_2)},
\]
$\bullet$ $g=3$, $R_{n}=R_{n}(c_0,c_1,c_2,c_3)$ ($2 \leq n \leq 6$), 
\[
\aligned
R_{2} &= \frac{(1 - \lambda_1)+(1 - \lambda_2)+(1 - \lambda_3)}{(1 + \lambda_1)+(1 + \lambda_2)+(1 + \lambda_3)}, \quad 
R_{3} = 3 \frac{(1-\lambda_1^2)+(1-\lambda_2^2)+(1-\lambda_3^2)}{(\lambda_1-\lambda_2)^2+(\lambda_1-\lambda_3)^2+(\lambda_2-\lambda_3)^2}, \\
R_{4} &= \frac{\sum_{1 \leq i<j \leq 3}(1-\lambda_i)(1-\lambda_j)(\lambda_i-\lambda_j)^2}
{\sum_{1 \leq i<j \leq 3}(1+\lambda_i)(1+\lambda_j)(\lambda_i-\lambda_j)^2},  
~R_{5} = 3 \frac{\sum_{1 \leq i<j \leq 3}(1-\lambda_i^2)(1-\lambda_j^2)(\lambda_i-\lambda_j)^2}
{\prod_{1 \leq i<j \leq 3}(\lambda_i-\lambda_j)^2}, \\
R_{6} &= \frac{(1 - \lambda_1)(1 - \lambda_2)(1 - \lambda_3)}{(1 + \lambda_1)(1 + \lambda_2)(1 + \lambda_3)}.
\endaligned
\]
These formulas look simpler than formulas of $R_n(\underline{c})$ in the end of Section \ref{section_2_2}
\begin{remark} 
After completing the first version of this paper, 
simple closed formulas of 
$m_{2g-n}(\underline{c},\;\log q)$, $m_{2g-n}(\underline{c},\;q^\omega)$, 
$R_n(\underline{c})$ and $R_n(\underline{c}\,;q^\omega)$ are obtained. 
See \cite{Suzuki12} for details. 
\end{remark}
\medskip

(2) There are several important classes of 
self-reciprocal polynomials with real coefficients. 
Here we mention two of them. 
The first one is 
zeta functions of smooth projective curves $C/{\mathbb F}_q$ of genus $g$:
$Z_C(T)=Q_C(T)/((1-T)(1-qT))$,
where $Q_C(T)$ is a polynomial of degree $2g$ satisfying the functional equation $Q_C(T)=(q^{1/2} T)^{2g} Q_C(1/(qT))$. 
Hence $P_C(x)=Q_C(q^{-1/2}x)$ is a self-reciprocal polynomial of degree $2g$ with real coefficients. 
Weil~\cite{Weil45} proved that all zeros of $P_C(x)$ lie on $T$ 
as a consequence of Castelnuovo's positivity for divisor classes on $C \times C$. 
The second one is polynomials $P_A(x)$ 
attached to $n \times n$ real symmetric matrices $A=(a_{i,j})$ 
with $|a_{i,j}| \leq 1$ for every $1 \leq i<j \leq n$ (no condition on the diagonal):
$P_A(x) = \sum_{I \sqcup J =\{1,2,\cdots,n\}} x^{|I|} \prod_{i \in I, j \in J} a_{i,j}$, 
where $I \sqcup J$ means a disjoint union. 
Polynomials $P_A(x)$ are obtained as the partition function of a ferromagnetic Ising model
and they are self-reciprocal polynomials of degree $n$ with real coefficients. 
The fact that all zeros of any $P_A(x)$ lie on $T$ 
is known as the  Lee-Yang circle theorem~\cite{LeeYang52}. 
Ruelle~\cite{Ruelle71} extended this result and characterized polynomials $P_A(x)$ in terms of multi-affine polynomials 
being symmetric under certain involution on the space of multi-affine polynomials ~\cite{Ruelle10}. 

It seems that a discovery of arithmetical, geometrical or physical interpretation 
of the positivity of $m_{2g-n}(\underline{c},\,;\log q)$ or $R_n(\underline{c})$ 
(for some restricted class of polynomials) is quite interesting and important problem. 
Such philosophical interpretation  may contribute to find a simple closed formula of $m_{2g-n}(\underline{c},\,;\log q)$ or $R_n(\underline{c})$. 

%


\bigskip \noindent
Department of Mathematics,  
Tokyo Institute of Technology \\
2-12-1 Ookayama, Meguro-ku, 
Tokyo 152-8551, JAPAN  \\
Email: {\tt msuzuki@math.titech.ac.jp}


\begin{thebibliography}{99}
%
\bibitem{Chen95}
W. Chen,
\newblock{On the polynomials with all their zeros on the unit circle}, 
\newblock{\it J. Math. Anal. and Appl.} 
\newblock{{\bf 190} (1995), 714--724}.
%
\bibitem{Chinen08}
K. Chinen,
\newblock{An abundance of invariant polynomials satisfying the Riemann hypothesis}, 
\newblock{\it Discrete Math.}
\newblock{{\bf 308} (2008), no. 24, 6426--6440}.
%
\bibitem{Cohn22} 
A. Cohn, 
\newblock{{\"U}ber die Anzahl der Wurzeln einer algebraischen Gleichung in einem Kreise}, 
\newblock{\it Math. Z.} 
\newblock{{\bf 14} (1922), no. 1, 110--148}.
%
\bibitem{deBranges68} 
L. de Branges, 
\newblock{Hilbert spaces of entire functions}, 
\newblock{\it Prentice-Hall, Inc., Englewood Cliffs, N.J.}, 
\newblock{1968}.
%
\bibitem{Dym70} 
H. Dym, 
\newblock{An introduction to de Branges spaces of entire functions with applications to differential equations of the Sturm-Liouville type}, 
\newblock{\it Advances in Math.} 
\newblock{{\bf 5} (1970),395--471}.
%
\bibitem{Kwon11}
D. Y. Kwon, 
\newblock{Reciprocal polynomials with all zeros on the unit circle}, 
\newblock{\it Acta Math. Hungar.}
\newblock{{\bf 131} (2011), no. 3, 285--294}.
%
%
\bibitem{Lagarias06}
J. C. Lagarias, 
\newblock{Hilbert spaces of entire functions and Dirichlet $L$-functions}, 
\newblock{\it Frontiers in number theory, physics, and geometry. I}, 
\newblock{365--377}, 
\newblock{\it Springer, Berlin}, 
\newblock{2006}.
%
\bibitem{Lagarias09}
\bysame, 
\newblock{The Schr{\"o}dinger operator with Morse potential on the right half-line}, 
\newblock{\it Commun. Number Theory Phys.}, 
\newblock{{\bf 3} (2009), no. 2, 323--361}.
%
\bibitem{Lakatos02}
P. Lakatos, 
\newblock{On zeros of reciprocal polynomials}, 
\newblock{\it Publ. Math. Debrecen}
\newblock{{\bf 61} (2002), no. 3-4, 645--661}.
%
\bibitem{LakatosLosonczi09}
P. Lakatos, L. Losonczi, 
\newblock{Polynomials with all zeros on the unit circle}, 
\newblock{\it Acta Math. Hungar.}
\newblock{{\bf 125} (2009), no. 4, 341--356}.
%
\bibitem{LalinSmyth12}
M. N. Lal{\'i}n, C. J. Smyth,
\newblock{Unimodularity of zeros of self-inversive polynomials}, 
\newblock{\it Acta Math. Hungar.}
\newblock{(2012)},
\newblock{DOI:\texttt{10.1007/s10474-012-0225-4}}
%
\bibitem{LeeYang52}
T. D. Lee, C. N. Yang, 
\newblock{Statistical theory of equations of state and phase transitions. II. Lattice gas and Ising model}, 
\newblock{\it Physical Rev. (2)}
\newblock{{\bf 87} (1952), 410--419}.
%
\bibitem{Levin80}
B. Ja. Levin, 
\newblock{Distribution of zeros of entire functions}, 
\newblock{Revised edition, Translations of Mathematical Monographs, 5}, 
\newblock{\it American Mathematical Society, Providence, R.I.}, 
\newblock{1980}. 
%
\bibitem{Levin96}
\bysame, 
\newblock{Lectures on entire functions}, 
\newblock{Translations of Mathematical Monographs, 150}, 
\newblock{\it American Mathematical Society, Providence, RI}, 
\newblock{1996}. 
%
\bibitem{Lucas74}
F. Lucas, 
\newblock{Propri\'et\'es g\'eom\'etriques des fractions rationnelles}, 
\newblock{\it C. R. Acad. Sci. Paris} 
\newblock{{\bf 77} (1874), 431--433; {\bf 78} (1874), 140--144; {\bf 78} (1874), 180--183; {\bf 78} (1874), 271--274}.
%
\bibitem{Marden66}
M. Marden, 
\newblock{Geometry of polynomials, 2nd ed.}, 
\newblock{Mathematical Surveys 3}, 
\newblock{\it American Mathematical Society, Providence, R.I.},
\newblock{1966}.
%
\bibitem{MiloRass00}
G. V. Milovanovi\'c, T. M. Rassias, 
\newblock{Distribution of zeros and inequalities for zeros of algebraic polynomials}, 
\newblock{\it Functional equations and inequalities}, 
\newblock{171--204, Math. Appl., 518}, 
\newblock{\it Kluwer Acad. Publ., Dordrecht}, 
\newblock{2000}. 
%
\bibitem{Remling02}
C. Remling,
\newblock{Schr{\"o}dinger operators and de Branges spaces}, 
\newblock{\it J. Funct. Anal.}
\newblock{{\bf 196} (2002), no. 2, 323--394}.
%
\bibitem{Ruelle71}
D. Ruelle, 
\newblock{Extension of the Lee-Yang circle theorem}, 
\newblock{\it Phys. Rev. Lett.}
\newblock{{\bf 26} (1971), 303--304}.
%
\bibitem{Ruelle10}
\bysame,
\newblock{Characterization of Lee-Yang polynomials}, 
\newblock{\it Ann. of Math. (2)}
\newblock{{\bf 171} (2010), no. 1, 589--603}.
%
\bibitem{Schur17}
I. Schur,
\newblock{\"Uber Potenzreihen, die im Innern des Einheitskreises beschr\"ankt sind}, 
\newblock{\it J. Reine Angew. Math.}
\newblock{{\bf 147} (1917), 205--232}.
%
\bibitem{Suzuki12}
M. Suzuki,
\newblock{On zeros of self-reciprocal polynomials. II}, 
\newblock{in preparation}, \\
\newblock{a draft is available at \url{http://www.math.titech.ac.jp/~msuzuki/srp2.pdf}}
%
%
\bibitem{Weil45}
A. Weil, 
\newblock{Sur les courbes alg\'ebriques et les vari\'et\'es qui s'en d\'eduisent}, 
\newblock{Actualit\'es Sci. Ind., no. 1041= Publ. Inst. Math. Univ. Strasbourg 7 (1945)}, 
\newblock{\it Hermann et Cie., Paris}, 
\newblock{1948}.
%
\end{thebibliography}
\end{document}